\newtheorem{Proposition}{Proposition}[section]
\newtheorem{Corollary}{Corollary}[section]
\newtheorem{Lemma}{Lemma}[section]
\newtheorem{Theorem}{Theorem}[section]
 \newtheorem*{theorem*}{Theorem}
\newtheorem{Example}{Example}[section]
\theoremstyle{definition}
\theoremstyle{definition}
\newtheorem{Remark}{Remark}
\newcommand{\R}{\mathbb{R}}
\newcommand{\C}{\mathbb{C}}
\newcommand{\Q}{\mathbb{Q}}
\newcommand{\N}{\mathbb N}
\newcommand{\Def}{\ensuremath{:=}}
\renewcommand{\Pr}{\mathbb{P}}
\newcommand{\Exp}{\mathbb{E}}
\newcommand{\filt}{\mathscr{F}}
\DeclareDocumentCommand \one { o }
{%
\IfNoValueTF {#1}
{\mathbf{1}  }
{\mathbf{1}\left\{ {#1} \right\} }%
}
\newcommand{\dtv}{d_{TV}}
\newcommand{\dbl}{d_{\text{BL}}}
\DeclareDocumentCommand \vso { o }
{%
\IfNoValueTF {#1}
{\mathcal{V}  }
{\mathcal{V}\left( {#1} \right) }%
}
\DeclareDocumentCommand \deg { O{ } }
{ \operatorname{deg}_{ #1 }}
\def\RR{\mathbb{R}}
\def\ZZ{\mathbb{Z}}
\def\NN{\mathbb{N}}
\newcommand{\alphabet}{\mathcal{A}}
\newcommand{\words}{\mathcal{A}^{*}}
\newcommand{\sequences}{\mathcal{A}^{\mathbb{N}}}
\newcommand{\statespace}{\ensuremath{\mathcal{X}}}
\newcommand{\pathspace}{\ensuremath{\mathcal{X}^*}}
\newcommand{\tm}{\ensuremath{\mathfrak{p}}}
\newcommand{\sm}{\ensuremath{\mathfrak{m}}}
\newcommand{\cha}{\ensuremath{\mathfrak{w}}}
\newcommand{\sma}{\ensuremath{\mathfrak{n}}}
\newcommand{\bx}{\ensuremath{\mathbf{x}}}
\DeclareDocumentCommand \incidence{ O{\cdot} O{} }
{
        \IfNoValueTF {#2}
        { \ell({#1}) }
        { (\ell({#1}))_{ {#2} } }
}
\DeclareDocumentCommand \functional{ O{f} }
{
        S_{ {#1} }
}
\DeclareDocumentCommand \upm{ O{y} O{p} }
{
\operatorname{UPM}_{ {#1}, {#2} }
}
\DeclareDocumentCommand \pathspace {O{p}}
{\ensuremath{\mathcal{X}^{*,#1}}}
\DeclareDocumentCommand \mpm{ O{p} }
{
\operatorname{SMPM}_{ {#1} }
}
\DeclareDocumentCommand \champm{ O{p} }
{
\operatorname{MPM}_{ {#1} }
}
\DeclareDocumentCommand \rmpm{ O{\mathfrak{a}} O{\infty} }
{
  \operatorname{RMPM}_{ {#1}, {#2} }
}
\DeclareDocumentCommand \ssim{ O{*} O{w_0} O{p} }
{
        \operatorname{SSIM}^{{#3}}_{ {#1}, {#2} }
}
\begin{document}

\title{Birkhoff sum fluctuations in susbstitution dynamical systems}
\author{Elliot Paquette}
\author{Younghwan Son}
\address{Department of Mathematics, Weizmann Institute of Science}
\email{paquette@weizmann.ac.il}
\email{younghwan.son@weizmann.ac.il}
\thanks{
EP is partially supported by NSF Postdoctoral Fellowship DMS-1304057.
}
\date{\today}
\maketitle

\begin{abstract}
We consider the deviation of Birkhoff sums along fixed orbits of substitution dynamical systems. We show distributional convergence for the Birkhoff sums of eigenfunctions of the substitution matrix. For non-coboundary eigenfunctions with eigenvalue of modulus $1$, we obtain a central limit theorem. For other eigenfunctions, we show convergence to distributions supported on Cantor sets. We also give a new criterion for such an eigenfunction to be a coboundary, as well as a new characterization of substitution dynamical systems with bounded discrepancy.
\end{abstract}

\section{Introduction}

Let $\alphabet$ be a finite set of letters.  Let $\words$ be the collection of all finite words using letters from $\alphabet.$ Let $\theta$ be a \emph{substitution} on $\alphabet,$ i.e.\,a map from $\alphabet \to \words.$  This can be extended to a map from $\words \to \words$ by concatenation, i.e.\,for all $a_1\cdots a_k \in \words,$ we define
\[
        \theta(a_1a_2\cdots a_k) = \theta(a_1)\theta(a_2)\cdots \theta(a_k).
\]
Define $\sequences$ to be all the sequences using elements of $\alphabet$, and we can extend $\theta$ further to map from $\sequences \to \sequences,$ again by concatenation.  
Also, for any finite or infinite word $u = u_1u_2\cdots,$ we let 
\(
        u_{< k} = u_1u_2\cdots u_{k-1},
\)
with $u_{<1}$ the empty word.  We further define $u_{\leq k}$ analogously.

Define a map $\incidence : \words \to \R^{\alphabet}$ which for any word $w = a_1a_2\cdots a_k$ and any $a \in \alphabet,$ 
\[
        \incidence[w][a] = 
        \left|\left\{ 1 \leq i \leq k\,:\,a_i = a \right\} \right|.
\]
Define the \emph{$\theta$-matrix} $M$ associated to $\theta$ as the $|\alphabet| \times |\alphabet|$ integer valued matrix so that $M_{a,b} = \incidence[\theta(b)][a]$ for all $a,b \in \alphabet,$ that is the number of occurrences of $a$ in $\theta(b).$
A substitution is called \emph{primitive} if there is a number $k > 0$ so that $M^k_{a,b} > 0$ for all $a,b \in \alphabet.$  We will assume from here on that $\theta$ is a primitive substitution.

An infinite sequence $u= (u_n)_{n=1}^{\infty} \in \sequences$ is called a \emph{fixed point} if there is $k \in \mathbb{N}$ such that $\theta^k(u) = u$. In general, one can find $a \in \alphabet$ and $k \in \mathbb{N}$ such that $\theta^k(a)$ begins with $a$. It is easy to check that $u = \lim_{m \to \infty} \theta^{km} (a)$ is a fixed point.

For any function $f: \alphabet \to \mathbb{C},$ define the map $\functional : \words \to \C$ by the rule that for any $w=a_1 a_2 \cdots a_k \in \words,$ 
\[
        \functional(w) = f(a_1) + f(a_2) + \cdots + f(a_k).
\]
Substitution systems (see Section \ref{sec:dynamics} for more background), are uniquely ergodic, and hence for a fixed point $u,$ we have by the Birkhoff ergodic theorem that 
\[
  \lim_{N\to \infty} \frac{\functional( u_{\leq N})}{N} \to \int f d \mu,
\]
for some measure $\mu.$  

We will study, in a sense, the first order correction term to this convergence. 
As a motivating example, consider the case of the irrational circle rotation, 
let $\alpha$ be any irrational number and consider any interval $I=[0,x)$ with $0 < x < 1$. Write
\[
  Z_{\alpha}(N;I) = \sum_{1 \leq n \leq N} \one[ 0 \leq \left\{ n\alpha \right\} < x ].
\]
Then by unique ergodicity of the irrational rotation we have that 
\[
  \frac{1}{N} Z_{\alpha}(N;I) \to x.
\]
The fluctuations of this ergodic average from $x$ can be described by the following theorem of Beck.
\begin{Theorem} [Beck \cite{Beck1,Beck2}]
Suppose that $\alpha$ is a quadratic irrational and $I=[0,x)$ has a rational endpoint $x$.
There are constants $C_1 = C_1(\alpha, x)$ and $C_2 = C_2(\alpha, x)$ such that for any real numbers $- \infty < t < \infty$,
\[
\frac{1}{N} \left| \{ 1 \leq n \leq N:  \frac{(Z_{\alpha} (n;I) - nx) - C_1 \log N}{C_2 \sqrt{\log N}} \leq t \} \right| \rightarrow \frac{1}{\sqrt{2 \pi}} \int_{- \infty}^t e^{-x^2/2} dx.
\]
\end{Theorem}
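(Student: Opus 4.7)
The plan is to exploit the special arithmetic structure of the quadratic irrational $\alpha$ via its eventually periodic continued fraction expansion, and to reduce the statement to a central limit theorem for a weakly dependent sequence by means of the Ostrowski numeration system associated with $\alpha$. First I would set up $\alpha = [a_0;a_1,a_2,\ldots]$ with convergents $p_k/q_k$ satisfying $q_k = a_k q_{k-1}+q_{k-2}$. Quadraticity forces $(a_k)$ to be eventually periodic, so $q_k$ grows geometrically at the rate of a Pisot number $\lambda > 1$. By Ostrowski's theorem, each $n \in \{1,\ldots,N\}$ has a unique representation $n = \sum_{k\geq 0} b_k(n) q_k$ under the standard admissibility constraints, and the number of nonzero digits is of order $\log N / \log \lambda$.

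The next step is to translate the discrepancy $Z_\alpha(n;I) - nx$ into digit language. Using the three-distance theorem together with the sign identity $q_k\alpha - p_k = (-1)^k \|q_k\alpha\|$, the geometric decay of $\|q_k\alpha\|$, and the fact that a rational $x$ has a finite Ostrowski expansion relative to $\alpha$ (up to an eventually periodic tail), one obtains an explicit combinatorial formula
\[
  Z_\alpha(n;I) - nx = \sum_{k} F(b_k(n), b_{k+1}(n), \ldots) + O(1),
\]
where $F$ is a bounded function depending on only finitely many consecutive digits. Collecting the mean of $F$ under the limiting digit distribution produces the logarithmic centering $C_1 \log N$.

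The core probabilistic step is a central limit theorem for the centered fluctuation sum. If $n$ is sampled uniformly from $\{1,\ldots,N\}$, then the joint law of its Ostrowski digits is well-approximated by a stationary Markov chain on a finite state space, with transitions dictated by the admissibility rules and by the period of the continued fraction. Since $F$ depends only on finitely many coordinates, I would invoke a Markov-chain CLT (via Gordin's martingale-difference approximation, or a classical mixing CLT) to conclude that the fluctuation is asymptotically $C_2\sqrt{\log N}$ times a standard Gaussian.

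The main obstacle will be the non-degeneracy statement $C_2 > 0$. Positivity of the limiting variance amounts to showing that $F$ is not a coboundary for the shift on the underlying Markov chain, which is exactly a coboundary obstruction of the type treated elsewhere in this paper. Because $\alpha$ is quadratic, the coboundary equation reduces to a finite linear system over one period of the continued fraction, so the degenerate pairs $(\alpha,x)$ can be explicitly identified and excluded. A secondary technical point is the boundary of the summation range: truncating the Ostrowski expansion at the top digit of $N$ introduces an $O(1)$ edge term that must be shown to be negligible on the scale $\sqrt{\log N}$, and the occasional very large partial quotient must be handled using the bounded nature of $(a_k)$ for quadratic $\alpha$.
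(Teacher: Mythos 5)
You should first note that the paper does not prove this statement at all: it is quoted as Beck's theorem and attributed to \cite{Beck1,Beck2}, with the paper only remarking that a dynamical proof for $x=\tfrac12$ appears in \cite{ADDS}. So there is no internal proof to compare against. What your sketch actually reconstructs is the renormalization/Markov-chain route --- Ostrowski digits playing the role of the paper's path space $\pathspace$, the digit formula playing the role of Proposition~\ref{prop:decomposition}, Gordin's method playing the role of Theorem~\ref{thm:Markov}, and the coboundary obstruction playing the role of Proposition~\ref{prop:coboundary}. That is the approach of \cite{ADDS} and of this paper's substitution machinery, and it is genuinely different from Beck's original argument, which is a lattice-point/Fourier-analytic computation. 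The dynamical route is conceptually cleaner and generalizes (as this paper shows), at the cost of less explicit constants.

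As a proof, however, the sketch has two real gaps. First, the identity $Z_\alpha(n;I)-nx=\sum_k F(b_k(n),b_{k+1}(n),\dots)+O(1)$ with $F$ bounded and of finite memory is asserted, not derived; it is precisely here that both hypotheses enter (quadraticity of $\alpha$ gives an eventually periodic digit alphabet, and rationality of $x$ gives an eventually periodic, \emph{non-terminating} Ostrowski expansion of $x$, which is what makes every scale contribute an $O(1)$ amount --- the analogue of $|\lambda_f|=1$; note the paper's Fibonacci example, where $x=1-\alpha$ has a terminating expansion and the limit law is Cantor-supported rather than Gaussian). Without this computation the mechanism distinguishing Gaussian from non-Gaussian behavior is not established. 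Second, the nondegeneracy $C_2>0$ is deferred to ``an explicit finite linear system,'' but for general quadratic $\alpha$ and general rational $x$ this is the hard part of the theorem and is not reduced to anything checkable in your outline. A smaller but genuine issue is the approximation of the uniform measure on $\{1,\dots,N\}$ by the stationary digit chain: the top $O(\log\log N)$ digits are constrained by $N$ and the chain is inhomogeneous, so one needs a quantitative coupling (the analogue of Propositions~\ref{prop:coupling} and \ref{prop:dist}), not merely the $O(1)$ edge-term remark you make.
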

\noindent  This is to say that the fluctuations of the Birkhoff sum are asymptotically normally distributed. Recently, a new dynamical proof of the above theorem for the case $x = \frac{1}{2}$ is obtained by studying renormalization properties of the linear flow on an infinite staircase \cite{ADDS}.


In the case of a fixed point of a substitution, we show a central limit theorem for eigenfunctions $f$ of $M$ with eigenvalues $\lambda_f$ of modulus $1.$  For some eigenfunctions $f,$ it is possible that $f$ is a \emph{coboundary}, meaning that $\{\functional(u_{\leq n})\}_{n=1}^\infty$ is bounded.  For these $f,$ no central limit theorem is possible, and we give a characterization of eigenfunctions $f$ that have this property in Proposition~\ref{prop:coboundary}.  Otherwise, if $f$ is not a coboundary, appropriately scaling $\functional(u_{\leq n}),$ the fluctuations of the Birkhoff sum will also be asymptotically normal.
We begin by giving the easiest of our theorems to formulate, where $\lambda_f =1$ (see Section \ref{sec:results} for the full formulation). 
\begin{Theorem}
\label{thm:clt0}
Let $f$ be a left eigenfunction of $M$ with eigenvalue $\lambda_f=1$ so that $f$ is not a coboundary.  There are constants $c_1, c_2$ so that for all real $t,$
\[
  \lim_{N \to \infty} \frac{1}{N}
  \left| \{
  1 \leq n \leq N : 
  \frac{\functional(u_{\leq n}) - c_1 \log_\lambda(N)}{c_2\sqrt{\log_\lambda(N)}} \leq t
  \} \right|
  \to
  \int_{-\infty}^t \frac{e^{-x^2/2}}{\sqrt{2\pi}}\,dx.
\]
(Here $\lambda$ is the Perron-Frobenius eigenvalue of the $\theta$-matrix $M$.)
\end{Theorem}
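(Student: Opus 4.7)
The plan is to exploit the renormalization structure of $\theta$ to rewrite $S_f(u_{\leq n})$ as a sum of $\asymp \log_\lambda n$ uniformly bounded terms, and then apply a central limit theorem to this sum of weakly dependent ``digits.'' Since $u$ is a fixed point of some $\theta^k$ and $f$ remains a left eigenfunction of $M^k$ with eigenvalue $\lambda_f^k=1$, I would first reduce to the case $u=\theta(u)$. The left-eigenfunction relation $fM=f$ then becomes the renormalization identity
\[
  S_f(\theta(w)) = S_f(w) \qquad \text{for every } w \in \words,
\]
which iterates to $S_f(\theta^j(w)) = S_f(w)$ for all $j \geq 0$.

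Next I set up the canonical (Dumont--Thomas) base-$\theta$ expansion of each prefix. Writing $u = \theta(u_1)\theta(u_2)\cdots$, there is a unique $m=m(n)$ with $|\theta(u_{\leq m})| \leq n < |\theta(u_{\leq m+1})|$, so $u_{\leq n} = \theta(u_{\leq m})\cdot p_0$ where $p_0$ is a proper prefix of $\theta(u_{m+1})$ of length $<L:=\max_a|\theta(a)|$. Iterating on $u_{\leq m}$ until the inner word is empty gives
\[
  u_{\leq n} = \theta^{K-1}(p_{K-1}) \cdot \theta^{K-2}(p_{K-2}) \cdots \theta(p_1) \cdot p_0,
\]
with each $p_j$ a proper prefix of $\theta(a_j)$ for some $a_j \in \alphabet$. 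Perron--Frobenius gives $|\theta^k(a)|\asymp \lambda^k$, hence $K(n) = \log_\lambda n + O(1)$. Applying the renormalization identity to each block collapses the Birkhoff sum to
\[
  S_f(u_{\leq n}) = \sum_{j=0}^{K(n)-1} S_f(p_j),
\]
a sum of $K(n)$ uniformly bounded terms parameterized by the digits $(a_j, p_j)$.

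I would then analyze the joint distribution of these digits as $n$ ranges uniformly over $[1,N]$. The map $n \mapsto (a_j(n), p_j(n))_{j \geq 0}$ intertwines with the Kakutani--Vershik odometer on the stationary Bratteli diagram of $\theta$; the frequencies of digit patterns at each level are computed from the left and right Perron--Frobenius eigenvectors of $M$, and successive digits exhibit exponential mixing from the spectral gap of $M$ on the complement of its Perron eigenspace (which contains $f$ since $|\lambda_f|=1<\lambda$). Setting $c_1 = \Exp[S_f(p_0)]$ and $c_2^2 = \sum_{j \in \Z} \operatorname{Cov}(S_f(p_0),S_f(p_j))$ under the resulting stationary law, a standard CLT for weakly dependent sequences (most cleanly via a Gordin-type martingale approximation, whose coboundary correction is summable by the spectral gap) yields
\[
  \frac{S_f(u_{\leq n}) - c_1 K(n)}{c_2 \sqrt{K(n)}} \xrightarrow{d} \mathcal{N}(0,1).
\]
Since $K(n) = \log_\lambda N + O(1)$ uniformly for $n$ in the bulk of $[1,N]$, replacing $K(n)$ by $\log_\lambda N$ and absorbing the $O(1)$ into the normalization gives the theorem.

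The main obstacle is linking positivity of $c_2$ with the non-coboundary hypothesis on $f$. I expect $c_2=0$ to force $S_f(p_j)$ to be a coboundary for the digit dynamics; pulling this back along the adic expansion should produce a function $g\colon \alphabet \to \C$ with $S_f(u_{\leq n}) = g(u_1) - g(u_{n+1}) + O(1)$, so that $\{S_f(u_{\leq n})\}$ is bounded and $f$ is a coboundary. The converse is immediate, since any coboundary relation visibly collapses the variance. This equivalence is presumably the substance of Proposition~\ref{prop:coboundary} announced in the introduction.
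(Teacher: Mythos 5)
Your proposal follows essentially the same route as the paper: the Dumont--Thomas digit expansion you describe is exactly the paper's path-space coding $\Psi_{a,p}$, the stationary law on digits is its Markov measure $\mpm[\infty]$ (with the comparison to the uniform measure on $\{1,\dots,N\}$ carried out by a coupling argument), the Gordin-type martingale approximation is precisely how the paper proves its Markov-chain CLT, and the identification of zero variance with the coboundary condition is the content of Proposition~\ref{prop:coboundary}. The approach and the constants ($c_1=\int f\,d\sma$, $c_2^2$ the asymptotic variance of the digit sums) match the paper's.
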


The condition that $f$ is an eigenfunction of modulus $1$ is essential to this theorem.  Indeed, if $f$ is an eigenfunction of modulus not equal to $1,$ the asymptotic distribution of the Birkhoff sums is non-normal (see Theorems \ref{thm:<1} and \ref{thm:>1}).  Conversely, for any eigenfunction $f$ with eigenvalue of modulus $1$ which is not a coboundary, we show a central limit theorem.  

The reason eigenfunctions of $M$ play a special role here is that they satisfy a certain renormalization identity.
Specifically, for an eigenfunction $f$ of $M$ with eigenvalue $\lambda_f,$ we have that for any word $w:$
\begin{equation}
  \label{eq:renormalization}
  \functional(\theta^k(w)) = \lambda_f^k\functional(w)
\end{equation}
Hence in the case where $\lambda_f$ has modulus $1,$ words of all different scales have the same contribution to the Birkhoff sum, due to which we can eventually prove a central limit theorem.

Let $(X_{\theta},T)$ be the substitution subshift of bi-infinite sequences associated to the primitive substitution $\theta$ (see Section~\ref{sec:dynamics} for background). It is known that if $\theta$ is primitive, there exists a unique ergodic measure $\mu$ on $X_{\theta}$. 
We are also interested in studying the behavior of ergodic sums for any point $x \in X_{\theta}$. Indeed we show a central limit theorem for a left eigenfunction $f$ of $M$, where $f$ is not a coboundary and the corresponding eigenvalue $\lambda_f$ is of modulus $1$ (see Theorem \ref{thm:clt2}). This theorem is intriguing in that the behavior of the Birkhoff sums of typical points is different from fixed points.  

Another approach taken by Bressaud, Bufetov and Hubert to studying fluctuations of Birkhoff sums is to look at $\functional( v_{\leq N_\ell})$ for some sequence $N_\ell\to \infty,$ where $v$ is distributed randomly according to $\mu.$  In this case, it turns out that this distribution may depend on the sequence of $N_\ell$ chosen: see \cite{BBH}.


\section{Preliminaries}
\subsection{Probability background}

We will occasionally use probability formalism where convenient.  We may say, for example, that $Z$ is a real valued random variable with a standard normal distribution, without specifying the probability space or naming the probability measure.  In this case, we are only interested in distributional properties of $Z.$  We will use $\Pr$ as a placeholder for this measure, for example:
\[
  \Pr [ Z \in A ] = \int\limits_A \frac{e^{-x^2/2}}{\sqrt{2\pi}}\,dx.
\]
We will also use $\Exp$ to denote integration against the distribution of a random variable, for example
\[
  \Exp f(Z) = \int\limits_\R f(x)\frac{e^{-x^2/2}}{\sqrt{2\pi}}\,dx.
\]

Our main theorems are stated in terms of distributional convergence of random variables, or \emph{weak} convergence.  There are many equivalent definitions of weak convergence, which we will freely interchange as convenient.  The equivalence of these definitions usually goes by the name of the \emph{Portmanteau} lemma.

\begin{Lemma}[Portmanteau lemma]
  Let $E$ be a metric space and let $\mu,\mu_1,\mu_2,\dots$ be Borel probability measures on $E.$  The following are equivalent.  If they occur, we write $\mu_n \Rightarrow \mu$ and say that $\mu_n$ converges in distribution to $\mu.$
  \begin{enumerate}
    \item For all bounded continuous functions $f,$
      \(
	\int f d\mu_n \to \int f d\mu.
      \)
    \item For all Lipschitz continuous functions $f,$
      \(
	\int f d\mu_n \to \int f d\mu.
      \)
    \item For all measurable $A$ with $\mu(\partial A) = 0$, 
      \(
      \lim_{n \to \infty} \mu_n(A) = \mu(A).
      \)
  \item If $E = \R^d,$ then for all $A$ of the form 
  \((-\infty, x_1] \times \cdots \times (-\infty, x_d] \)
  with $\mu(\partial A) = 0,$
  \(
      \lim_{n \to \infty} \mu_n(A) = \mu(A).
  \)
  \end{enumerate}
  \label{lem:portemanteau}
\end{Lemma}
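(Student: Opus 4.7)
The plan is to prove the cyclic chain $(1) \Rightarrow (2) \Rightarrow (3) \Rightarrow (1)$ in the general metric-space setting, and separately $(3) \Leftrightarrow (4)$ in the $\R^d$ case. Throughout I interpret ``Lipschitz continuous'' in (2) as \emph{bounded} Lipschitz, which is the standard convention ensuring integrability against an arbitrary Borel probability measure; with that reading $(1) \Rightarrow (2)$ is immediate.

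For $(2) \Rightarrow (3)$, the workhorse is the bounded Lipschitz approximation to the indicator of a closed set $F$,
\[
f_\epsilon(x) := \max\{0,\, 1 - \epsilon^{-1} d(x, F)\}.
\]
One has $\mathbf{1}_F(x) \leq f_\epsilon(x) \leq \mathbf{1}_{\{d(x,F) < \epsilon\}}$, so hypothesis (2) yields $\limsup_n \mu_n(F) \leq \int f_\epsilon\,d\mu$, and letting $\epsilon \downarrow 0$ with dominated convergence gives $\limsup_n \mu_n(F) \leq \mu(F)$. Complementation promotes this to $\liminf_n \mu_n(G) \geq \mu(G)$ for open $G$. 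For arbitrary $A$ with $\mu(\partial A) = 0$, the sandwich $\mu_n(A^\circ) \leq \mu_n(A) \leq \mu_n(\bar A)$ together with $\mu(A^\circ) = \mu(\bar A) = \mu(A)$ forces $\mu_n(A) \to \mu(A)$.

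For $(3) \Rightarrow (1)$, let $f$ be bounded continuous with $|f| \leq M$, and fix $\epsilon > 0$. Since at most countably many $t \in \R$ can satisfy $\mu\{f = t\} > 0$ (any family of pairwise disjoint positive-measure sets in a probability space is countable), I choose a partition $-M-1 = t_0 < t_1 < \cdots < t_k = M+1$ of mesh below $\epsilon$ that avoids all such atoms. Continuity of $f$ forces $\partial\{t_{i-1} < f \leq t_i\} \subseteq \{f = t_{i-1}\} \cup \{f = t_i\}$, so every $A_i := \{t_{i-1} < f \leq t_i\}$ is a $\mu$-continuity set. The simple function $\phi := \sum_i t_i \mathbf{1}_{A_i}$ is uniformly within $\epsilon$ of $f$, and hypothesis (3) gives $\int \phi\,d\mu_n \to \int \phi\,d\mu$; sending $\epsilon \to 0$ closes the loop. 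Finally, in $\R^d$ the direction $(3) \Rightarrow (4)$ is trivial, while for $(4) \Rightarrow (3)$ I would promote (4) by inclusion--exclusion to convergence $\mu_n(R) \to \mu(R)$ on every rectangle $R = \prod_i (a_i, b_i]$ whose bounding hyperplanes avoid the (at most countably many) atomic hyperplanes of the marginals of $\mu$, and then approximate an arbitrary open set from inside by an increasing countable union of such rectangles to recover $\liminf_n \mu_n(G) \geq \mu(G)$.

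There is no single dramatic obstacle; the technical care is concentrated in the two approximation arguments (the Lipschitz $f_\epsilon$ in $(2) \Rightarrow (3)$ and the simple-function $\phi$ in $(3) \Rightarrow (1)$), plus the recurring observation that any Borel probability measure on $\R$ has at most countably many atomic points, which is used several times to dodge bad partition boundaries.
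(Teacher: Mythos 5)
The paper does not actually prove this lemma; it simply cites \cite[Theorem 13.16]{Klenke}. So there is no ``paper approach'' to compare against, and your proof fills a gap that the authors chose not to fill. Your argument is the standard textbook chain of implications and is correct as sketched. A few remarks worth flagging. Your reading of (2) as \emph{bounded} Lipschitz is necessary, not merely conventional: with unbounded Lipschitz $f$ the equivalence genuinely fails (take $\mu_n = (1-1/n)\delta_0 + (1/n)\delta_n$ on $\R$, $f(x)=x$; then $\mu_n \Rightarrow \delta_0$ but $\int f\,d\mu_n \equiv 1$), and one also needs $\|f\|_\infty < \infty$ for $\int f\,d\mu$ to be defined for an arbitrary Borel probability measure. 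In $(2)\Rightarrow(3)$ the passage $\int f_\epsilon\,d\mu \downarrow \mu(F)$ as $\epsilon \downarrow 0$ is cleanest via $f_\epsilon \leq \1_{\{d(\cdot,F)<\epsilon\}}$ together with continuity of $\mu$ from above, since $\{d(\cdot,F)<\epsilon\} \downarrow F$ for $F$ closed; dominated convergence works too. In $(3)\Rightarrow(1)$ the observation $\partial\{t_{i-1} < f \leq t_i\} \subseteq \{f=t_{i-1}\}\cup\{f=t_i\}$ is exactly right, and choosing $t_0 < -M$ and $t_k > M$ makes the endpoints automatically unproblematic. The only place where the sketch compresses a real step is $(4)\Rightarrow(3)$: after inclusion--exclusion gives $\mu_n(R)\to\mu(R)$ for a single half-open box $R$ with ``good'' corners, you need the same for \emph{finite unions} of such boxes before you can pass to open $G$ by monotone approximation; this follows because any finite union of boxes of this type decomposes into a disjoint finite union of boxes whose corners remain in the chosen dense sets $D_i$, but it deserves a sentence. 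With that spelled out, the proof is complete.
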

\noindent See \cite[Theorem 13.16]{Klenke}.

It is also possible to metrize weak convergence.  We will make use of such a metric later on.  Define the \emph{bounded-Lipschitz metric} on the space of probability measures on a metric space $E$ as
\[
	\dbl(\mu,\nu)
	= \sup_{\substack{ f: E \to \R \\ \|f\|_\infty \leq 1 \\ \|f\|_{\text{Lip}} \leq 1}} 
	\left|
	\int f\,d\mu
	-\int f\,d\nu
	\right|.
\]
When $E$ is a separable metric space, it is a theorem of Dudley~\cite{Dudley} that weak convergence is equivalent convergence in the bounded-Lipschitz metric.

Theorems \ref{thm:clt1}, \ref{thm:clt2}, and \ref{thm:Markov} show distributional convergence to a complex normal distribution.  A complex random variable $Z$ with mean $0$ is said to have a complex normal distribution with positive definite covariance matrix 
\(
\Gamma = \left[\begin{smallmatrix}
  \Exp (\Re Z)^2 & \Exp(\Re Z \Im Z) \\
  \Exp (\Re Z\Im Z) & \Exp (\Im Z)^2 \\
\end{smallmatrix}\right]
\)
if for any Borel measurable $A\subseteq \R^2$
\[
  \Pr\left[ Z \in A \right]
  = \int\limits_A \frac{1}{2\pi\sqrt{\det \Gamma}}e^{-\tfrac 12 x^t \Gamma^{-1} x}\,dx_1dx_2.
\]
It follows from this definition that both of $\Re Z$ and $\Im Z$ are normally distributed.  Note that $Z$ is completely determined by $\Exp |Z|^2$ and $\Exp Z^2.$  If $\Exp \Re Z\Im Z = 0,$ then the real and imaginary parts of $Z$ are in fact independent normal random variables.

\subsection{Path space}
Define the \emph{state space} $\statespace$ by
\begin{equation}
        \statespace
        =
        \left\{ (a,j) : a \in \alphabet, 1 \leq j \leq |\theta(a)| \right\}.
        \label{eq:ss}
\end{equation}
Also, for each $p \in \NN$, define the \emph{path space} $\pathspace[p] \subseteq \statespace^{p}$ of all sequences $\left\{ (v_i,k_i) \right\}_{i=1}^p$ so that for all $1\leq i < p,$ $v_i = \theta(v_{i+1})_{k_{i+1}}$.

We will now define a coding of the collection of strict prefixes of $w=\theta^{p}(a).$
Let $n \leq |\theta^{p}(a)|$ be a positive integer. 
One can find $v_p, v_{p-1}, \dots, v_1 \in \alphabet$ and positive integers $k_p, k_{p-1}, \dots, k_1$ such that
\[
          w_{[1,n)} = \theta^{p-1} (\theta (v_p)_{< k_p}) \cdot \theta^{p-2} (\theta (v_{p-1})_{<k_{p-1}}) \cdots \theta(\theta(v_2)_{< k_2}) \cdot \theta(v_1)_{< k_1},      
\] 
where 
\[
      v_p = a, \quad 1 \leq k_p \leq |\theta(a)|
\]
and for $ i = 1, \dots, p-1$
\[
      v_i = \theta(v_{i+1})_{k_{i+1}}, \quad 1 \leq k_i \leq |\theta(v_i)|.
\]
Since this expression is unique, there is an injection $\Psi_{a,p} : \left\{ 1,2,\ldots, |\theta^p(a)| \right\} \to \pathspace[p]$ given by
\begin{equation}
        \label{eq:11}
        \Psi_{a,p}(n) = (v_1, k_1) (v_{2}, k_{2}) \cdots (v_p, k_p).
\end{equation}

\begin{figure}
\begin{tikzpicture}
\matrix (network)
[matrix of nodes,%
nodes in empty cells,
nodes={outer sep=0pt,circle,minimum size=4pt,draw},
column sep={1cm,between origins},
row sep={2cm,between origins}]
{
   & & & & & \\
   & & & & & \\
   & & & & & \\
   & & & & & \\
};
\foreach \a in {1,...,6}{
  \draw [dotted] (network-1-\a)  -- (network-2-\a);
	\draw [stealth-] ([yshift=-5mm]network-4-\a.south) -- (network-4-\a);
}

\foreach \a in {1,2,6}{
	\foreach \b in {1,2,3} {
		\draw (network-4-\a) -- (network-3-\b);
		\draw (network-3-\a) -- (network-2-\b);
}
}
\foreach \a in {3,4,5}{
	\foreach \b in {4,5,6} {
		\draw (network-4-\a) -- (network-3-\b);
		\draw (network-3-\a) -- (network-2-\b);
}
}

\node[left=of network-1-1] { $1$};
\node[left=of network-2-1] { $i-1$};
\node[left=of network-3-1] { $i$};
\node[left=of network-4-1] { $i+1$};

\node[above=of network-1-1] {$(a,1)$};
\node[above=of network-1-2] {$(a,2)$};
\node[above=of network-1-3] {$(a,3)$};
\node[above=of network-1-4] {$(b,1)$};
\node[above=of network-1-5] {$(b,2)$};
\node[above=of network-1-6] {$(b,3)$};

\end{tikzpicture}
\caption{ Here we give a graphical representation of path space for the substitution $\theta(a)=aab,\theta(b)=bba.$}
\end{figure}
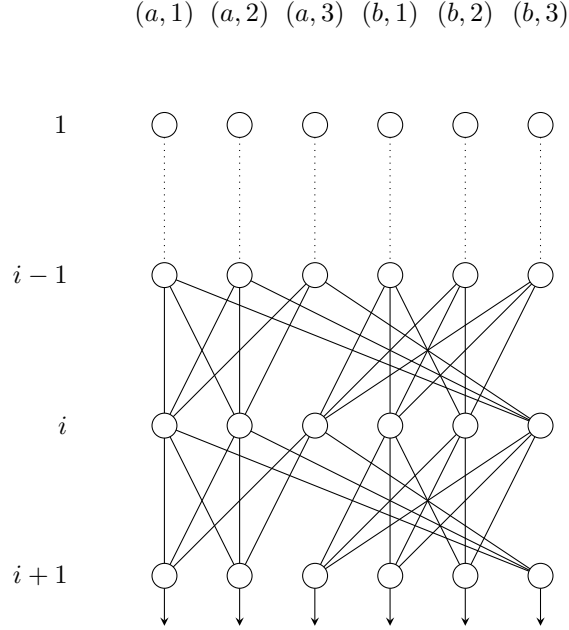

We also define the infinite path space $\pathspace[\infty] \subseteq \statespace^{\infty}$ as the collection of all sequences $\left\{ (v_i,k_i) \right\}_{i=1}^\infty$ so that for all $1\leq i < p,$ $v_i = \theta(v_{i+1})_{k_{i+1}}.$  On this space, there is a natural Markov measure associated to $\theta,$ which we denote $\mpm[\infty].$  It is a primitive, stationary Markov chain with some transition matrix $\tm.$  It can be defined combinatorially (see \eqref{eq:p}) or as the unique invariant measure under the \emph{adic transformation} (see Section \ref{sec:dynamics}).

For every $x=(a,j),y=(b,k) \in \statespace,$ 
let $\ssim[x][y][1]=\one[a = \theta(b)_{k}].$  We then define, inductively,
\[
\ssim[x][y][p] =\sum_{z \in \statespace} \ssim[x][z][p-1]\ssim[z][y][1].
\]
Hence, $\ssim[x][y]$ counts the number of elements in $\pathspace[p+1]$ started from $x$ and ended at $y.$  Let $\ssim[*][y] = \sum_{x \in \statespace} \ssim[x][y].$

We can see that $\ssim[\cdot][\cdot][1]$ is a primitive matrix:
since $\theta$ is primitive, there exists $k$ such that for any $c, d \in \alphabet$, $c$ appears in $\theta^k(d)$. This means that there exist $m_1, m_2, \dots, m_k$ so that $\theta(\theta( \cdots (\theta(d)_{m_1})_{m_2} \cdots )_{m_k} = c$. Thus for any $(a, i), (b, j) \in \statespace$, $a = \theta( \cdots (\theta(b)_j)_{m_1} \cdots)_{m_k}$, so $\ssim[\cdot][\cdot][k+1] \geq 1$.

\subsection{Eigenfunctions of $\ssim[\cdot][\cdot][1]$}
We begin by observing that for all $(a,j) \in \statespace$ and all $b \in \alphabet$
\[
        M_{a,b}
        =\sum_{k=1}^{|\theta(b)|}
        \ssim[(a,j)][(b,k)][1].
\]
Let $(\rho(b))_{b}$ be a right Perron-Frobenius eigenfunction of $M$ with Perron-Frobenius eigenvalue $\lambda,$ and define the map ${\hat \rho}( (b,k)) = \rho(b)$ for all $(b,k) \in \statespace.$  Then we have that for any $(a,j) \in \statespace,$
\begin{align*}
        \lambda {\hat \rho}(a,j)
        &= \lambda \rho(a) \\
        &= \sum_{b}M_{a,b}\rho(b) \\
        &=\sum_{(b,k) \in \statespace}\ssim[(a,j)][(b,k)][1]\rho(b) \\
        &=\sum_{(b,k) \in \statespace}\ssim[(a,j)][(b,k)][1]{\hat \rho}(b,k).
\end{align*}
Hence, as it is non-negative, $\hat \rho$ is a right Perron-Frobenius eigenfunction of $\ssim[\cdot][\cdot][1]$ with eigenvalue $\lambda;$ the same calculation shows that any right eigenfunction of $M$ can be canonically associated to a right eigenfunction of $\ssim[\cdot][\cdot][1].$  

Conversely, a similar calculation shows that any left eigenfunction $\hat g$ of $\ssim[\cdot][\cdot][1]$ gives rise to a left eigenfunction of $M$ with the same eigenvalue by the formula $g(a) = \sum_{j=1}^{|\theta(a)|} \hat g (a,j),$ provided that $g$ is non-zero.  
Furthermore, by the eigenvector equation we have
\begin{align*}
  \lambda_g \hat g(b,k)
  &=\sum_{(a,j) \in \statespace} \hat g(a,j) \ssim[(a,j)][(b,k)][1] \\
  &=\sum_{(a,j) \in \statespace} \hat g(a,j) \one[\theta(b)_k = a] \\
  &=\sum_{a \in \alphabet} g(a) \one[\theta(b)_k = a] \\
  &=g(\theta(b)_k).
\end{align*}

Hence, if $\hat \sigma$ is the left Perron-Frobenius eigenfunction of $\ssim[\cdot][\cdot][1],$ normalized so that 
\(
\sum_{x \in \statespace} 
{\hat \sigma}(x)
{\hat \rho}(x)
=1,\)
then $\sigma(a) = \sum_{j=1}^{|\theta(a)|} \hat \sigma (a,j)$ is the left Perron-Frobenius eigenfunction of $M$ normalized so that 
\(
\sum_{a \in \alphabet} 
{\sigma}(a)
{\rho}(a)
=1\)
and $\hat \sigma(b,k) = \lambda^{-1} \sigma(\theta(b)_k).$

\subsection{Measures on path space}
\label{sec:measures}
To prove our theorems, we must consider the measure $\nu_N$ that arises on $\pathspace$ by choosing an integer $n \in \left\{ 1,2,\ldots,N \right\}$ uniformly at random. This means that for every $n \in \{1, 2, \dots, N\}$
\[\nu_N (\{\psi_{a,p}(n)\}) = \frac{1}{N}, \]
where $|\theta^{p-1}(a)| < N \leq |\theta^p(a)|$.

We will decompose $\nu_N$ in terms of other measures. 
For every $p \in \NN$ and $y \in \statespace$
define the probability measure $\upm$
for each
$\bx=x_1x_2\cdots x_p \in \pathspace$ by
\[
        \upm\left( \left\{ \bx \right\}\right) =
        \frac{\one[x_{p} = y]}{\ssim[*][y][p-1]}.
\]
Then the following is immediate.
\begin{Proposition}
	Let $a \in \alphabet$ and $p \in \NN$ be such that $|\theta^{p-1}(a)| < N \leq |\theta^p(a)|.$
        Let $\Psi_{a,p}(N) = (v_1,k_1)(v_2,k_2)\cdots (v_p,k_p).$
        Then for every $\bold{x}=x_1x_2\cdots x_p \in \pathspace,$ we have
        \[
                \nu_N
                \left( \left\{ \bold{x} \right\} \right)
                =
                \sum_{\ell=1}^p \sum_{1 \leq j < k_\ell} 
                \frac{\ssim[*][(v_\ell,j)][\ell]}{N} 
                \upm[(v_\ell,j)][\ell]
                \left( \left\{ x_1x_2\cdots x_\ell \right\} \right)
                .
        \]
        \label{prop:decomposition}
\end{Proposition}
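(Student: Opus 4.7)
The plan is to express $\nu_N$ as a convex combination of the uniform prefix measures $\upm[(v_\ell,j)][\ell]$ by decomposing $\{1,\ldots,N\}$ according to the hierarchical structure encoded by $\Psi_{a,p}$.

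First I would establish a hierarchical partition of $\{1,\ldots,N-1\}$. Writing $\Psi_{a,p}(n) = (v'_1(n), k'_1(n)) \cdots (v'_p(n), k'_p(n))$ and comparing to $\Psi_{a,p}(N)$, the uniqueness of the canonical decomposition of $w_{[1,n)}$ displayed just before \eqref{eq:11} yields a unique level $\ell = \ell(n) \in \{1,\ldots,p\}$ with $k'_i(n) = k_i$ for all $i > \ell$ and $k'_\ell(n) < k_\ell$. Setting $j(n) = k'_\ell(n)$, this partitions $\{1,\ldots,N-1\}$ into blocks $B_{\ell,j}$ indexed by pairs $(\ell,j)$ with $1 \leq \ell \leq p$ and $1 \leq j < k_\ell$.

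Next I would identify block sizes and conditional distributions. Fixing $(\ell, j)$, the map $n \mapsto (v'_1(n), k'_1(n)) \cdots (v'_\ell(n), k'_\ell(n))$ is a bijection from $B_{\ell,j}$ onto the paths in $\pathspace[\ell]$ ending at $(v_\ell, j)$: the prefix determines $n$ since the higher-level coordinates of $\Psi_{a,p}(n)$ are forced to equal those of $\Psi_{a,p}(N)$, and every such prefix arises from a unique $n$. Hence $|B_{\ell,j}|$ equals the normalization constant appearing in $\upm[(v_\ell,j)][\ell]$, and conditionally on $n \in B_{\ell,j}$ the prefix of $\Psi_{a,p}(n)$ is distributed exactly as $\upm[(v_\ell,j)][\ell]$.

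Finally, the tower property gives, for every $\bx \in \pathspace[p]$,
\[
  \nu_N(\{\bx\}) = \sum_{\ell=1}^p \sum_{1 \leq j < k_\ell} \frac{|B_{\ell,j}|}{N} \, \Pr[\Psi_{a,p}(n) = \bx \mid n \in B_{\ell,j}].
\]
Under this conditioning the tail coordinates $x_{\ell+1}, \ldots, x_p$ are deterministically forced to $(v_{\ell+1}, k_{\ell+1}), \ldots, (v_p, k_p)$, so the conditional probability collapses to $\upm[(v_\ell,j)][\ell](\{x_1 \cdots x_\ell\})$. Substituting the block-size identification from the previous step then yields the claimed formula. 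The conceptual heart of the argument is the uniqueness of the "first level of disagreement" from Step 1; the main obstacle is simply the careful alignment of $\ssim$-indices across the block-size count, the $\upm$ normalization, and the numerator in the proposition, which reduces to the recursive identity $\ssim[*][y][r] = |\theta^r(\theta(y^{(1)})_{y^{(2)}})|$ obtained by iterating the definition of $\ssim$.
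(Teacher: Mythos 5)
Your approach is the right one and is surely what makes the decomposition ``immediate'': partition $\{1,\ldots,N-1\}$ according to the top-most coordinate $\ell$ at which $\Psi_{a,p}(n)$ disagrees with $\Psi_{a,p}(N)$, identify the block $B_{\ell,j}$ with the set of paths in $\pathspace[\ell]$ ending at $(v_\ell,j)$, and observe that the conditional law of the $\ell$-prefix is exactly $\upm[(v_\ell,j)][\ell]$. (Both your argument and the displayed formula quietly drop the atom at $n=N$, i.e.\ the term $\tfrac1N\one[\bx=\Psi_{a,p}(N)]$; this is harmless.)

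You should, however, actually carry out the index alignment you defer to in your last sentence, because it does not simply ``reduce'' to the identity you quote --- it exposes a discrepancy with the printed statement. Your Step~2 gives $|B_{\ell,j}|=\ssim[*][(v_\ell,j)][\ell-1]$, the number of $\pathspace[\ell]$-paths ending at $(v_\ell,j)$, which is precisely the $\upm[(v_\ell,j)][\ell]$ normalization constant and equals $|\theta^{\ell-1}(\theta(v_\ell)_j)|$; the coefficient printed in the proposition is $\ssim[*][(v_\ell,j)][\ell]=|\theta^{\ell}(\theta(v_\ell)_j)|$, one level higher. A mass count confirms your version is the one that closes: since $N-1=\sum_{\ell=1}^p|\theta^{\ell-1}(\theta(v_\ell)_{<k_\ell})|=\sum_{\ell}\sum_{j<k_\ell}\ssim[*][(v_\ell,j)][\ell-1]$, the block sizes sum to $N-1$ as they must, whereas $\sum_{\ell}\sum_{j<k_\ell}\ssim[*][(v_\ell,j)][\ell]$ is of order $\lambda N$. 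So the superscript in the proposition appears to be an off-by-one slip; your derivation is correct, but the claim that your substitution ``yields the claimed formula'' verbatim is exactly where the argument and the statement part ways, and you should note the correction rather than glossing over it.
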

We can also write, for any $\bx = x_1x_2\dots x_p \in \statespace^p,$ that
\[
\upm\left( \left\{ \bx \right\}\right) 
= 
        \frac{\one[x_{p} = y]}{\ssim[*][y][p-1]}
	\prod_{i=1}^{p-1}
	\ssim[x_i][x_{i+1}][1].
\]
This family of measures has a (inhomogeneous) Markov chain structure.  To express this, define a collection $h^y_p$ of Markov transition matrices given by, for all $x,z \in \statespace$, 
\[
        h^y_p(x,z) \Def
        \frac{\ssim[x][z][1]\ssim[z][y][p-2]}{\ssim[x][y][p-1]} ,
\]
which is a transition matrix as for any $x,y \in \statespace$ and any $p \in \NN,$
\[
\sum_{z \in \statespace} h^y_p(x,z)
=
\sum_{z \in \statespace} \frac{\ssim[x][z][1]\ssim[z][y][p-2]}{\ssim[x][y][p-1]}=1.
\]
This allows us to express $\upm$ for any $\bold{x}=x_1 x_2 \cdots x_p \in \statespace^{p}$ as
\begin{align}
  &\upm (\{\bold{x}\}) = \nonumber\\
        &\frac{\ssim[x_1][y][p-1]}{\ssim[*][y][p-1]}
	h^y_p(x_1,x_2)
	h^y_{p-1}(x_2,x_3)
	\dots
	h^y_{3}(x_{p-2},x_{p-1})
	\frac{\ssim[x_{p-1}][x_p][1]}{\ssim[x_{p-1}][y][p-1]}
	\one[x_p = y].
  \label{eq:upm}
\end{align}

By the Perron-Frobenius theorem, as $\ssim[\cdot][\cdot][1]$ is primitive, we have for all $x,y \in \statespace$
\begin{equation}
  \label{eq:ssimlimit}
        \lim_{p \to \infty} \lambda^{-p}\ssim[x][y][p] = 
        {\hat \sigma}(y)
        {\hat \rho}(x).
      \end{equation}
Hence the limit
\[
        \tm(x,z) = \lim_{p\to \infty} h^y_p(x,z)
\]
exists for all $x,z \in \statespace$ and is independent of $y.$ In fact, there is a constant $c > 0$ so that for all $p \in \NN,$
\begin{equation}
        \sup_{x,z\in\statespace} | \tm(x,z) - h^y_p(x,z)| \leq e^{-cp}.
        \label{eq:pconvergence}
\end{equation}
Further, we have the following explicit formula for $\tm:$
\begin{equation}
        \tm(y,z) = \frac{\ssim[y][z][1]{\hat \rho}(z) }{\lambda {\hat \rho}(y)}.
        \label{eq:p}
\end{equation}

It also follows that the limit
\[
	\cha(x_1) = \lim_{p  \to \infty}
\frac{\ssim[x_1][y][p-1]}{\ssim[*][y][p-1]}
= \frac{{\hat \rho}(x_1)}{\sum_{x \in \statespace} {\hat \rho}(x)}
\]
exists.
This motivates the definition of the following Markov measure on $\pathspace,$ where for any $\bx = x_1x_2\dots x_p,$
\begin{equation}
	\champm\left( \left\{ \bx \right\} \right)
		=
	        \cha(\{x_1\})
	\prod_{i=1}^{p-1} \tm(x_{i},x_{i+1}).
	\label{eq:champm}
\end{equation}
We will also define a stationary version of $\champm.$

Let $\sm$ be the invariant measure of $\tm$ on $\statespace.$
Define, for $\bold{x}=x_1 x_2 \cdots x_p  \in \pathspace,$
\begin{equation}
        \mpm\left( \left\{ \bold{x} \right\} \right)
        =
        \sm(\{x_1\})
	\prod_{i=1}^{p-1} \tm(x_{i},x_{i+1})
	.
	\label{eq:mpm}
\end{equation}
It follows that the invariant measure $\sm$ must be $\sm({y})={\hat \sigma}(y){\hat \rho}(y),$ as
\[
        \sum_{y \in \statespace}
{\hat \sigma}(y){\hat \rho}(y)
\tm(y,z)
=
        \sum_{y \in \statespace}
\frac{{\hat \sigma}(y) \ssim[y][z][1]
\hat \rho(z)
}{\lambda }
=
{\hat \sigma}(z){\hat \rho}(z).
\]


Given a function $f : \alphabet \to \C,$
define a probability measure $\sma$ on $\alphabet$ by the following formula
\begin{equation}
        \int_{\alphabet} f(a) \sma(da)
        =\int_{\statespace} \functional[f]( \theta(a)_{<j}) \sm(d(a,j))
        =
        \sum_{(a,j) \in \statespace}
{\hat \sigma}(a,j){\hat \rho}(a,j)
\functional[f]( \theta(a)_{<j}).
\label{eq:com}
\end{equation}
This measure will be used to express the drift in the central limit theorems as well as the condition for being a coboundary.

\begin{Example}
\label{ex:main} 
Let $\alphabet = \{a, b\}$ and $\theta$ is given by
\[\theta: a \to aab, \quad b \to bba. \]
Then the $\theta$-matrix $M$ is 
\[ \left( \begin{array}{ccc}
2 & 1  \\
1 & 2  \end{array} \right).\]

\begin{itemize}
\item Eigenvalues: $ \lambda =3, \lambda_f = 1.$
\item Left eigenvectors  $\sigma = [1,1], \quad f = [1,-1].$
\item Right Perron-Frobenius eigenvector $\rho = \left(\begin{smallmatrix} 1 \\ 1\end{smallmatrix}\right).$ 
\item The state space is denoted by $\statespace = \{(a,1), (a,2), (a,3), (b,1), (b,2), (b,3)\}.$ Then 
  $\ssim[\cdot][\cdot][1]$ 
  is given by
\[ \ssim[\cdot][\cdot][1] =
 \left( \begin{array}{cccccc}
1 & 1 & 1 & 0 & 0 & 0  \\
1 & 1 & 1 & 0 & 0 & 0  \\
0 & 0 & 0 & 1 & 1 & 1  \\
0 & 0 & 0 & 1 & 1 & 1  \\
0 & 0 & 0 & 1 & 1 & 1  \\
1 & 1 & 1 & 0 & 0 & 0  \end{array} \right)\]
\item A left and a right Perron-Frobenius eigenvectors $\hat{\sigma}$ and $\hat{\rho}$ of $\ssim[\cdot][\cdot][1]$ are  
\[ \hat{\sigma} = [1/6,1/6,1/6,1/6,1/6,1/6], \quad \hat{\rho}^t =[1,1,1,1,1,1]. \] 
\item the invariant measure is $\sm({y})={\hat \sigma}(y){\hat \rho}(y) = \frac{1}{6}$ for any $y \in \statespace$.
\item From \eqref{eq:p},
$\tm  = \frac{1}{3} \ssim[\cdot][\cdot][1].$
\end{itemize}
We have that
\begin{equation}
\label{eq:maindrift}
 \int_{\alphabet} f(a) \sma(da) = \sum_{(a,j) \in \statespace} {\hat \sigma}(a,j){\hat \rho}(a,j) \functional[f]( \theta(a)_{<j}) = (0 + 1 + 2 + 0 -1 -2) \frac{1}{6} =0.
\end{equation}
\end{Example}

\subsection{Reversed path space}

When working with eigenfunctions with eigenvalue $|\lambda_f| > 1,$ it is more convenient to work with a \emph{reversed path space}.
Define the reversed path space $\pathspace_r \subset \statespace^p$ as the reversals of all sequences in $\pathspace.$  Fix $a \in \alphabet$ and consider $\pathspace_r$ as embedded in $\statespace^{\infty}$ by appending to any element the infinite sequence $(a,1)(a,1)\cdots .$  Make $\statespace^{\infty}$ into a topological space by endowing it with the product topology, and let $\pathspace[\infty]_r$ be the closure of $\cup_{p}^\infty \pathspace_r.$  Now define $\Psi_a^r : \NN \to \statespace^{\infty}$ by
\[
  \Psi_a^r(N)
  =(v_p,k_p)(v_{p-1},k_{p-1})\cdots (v_1,k_1)(a,1)(a,1)\cdots,
\]
where $(v_i,k_i)$ are those appearing in $\Psi_{a,p}(N)$ for $|\theta^{p-1}(a)|<N \leq |\theta^p(a)|.$  

We will now give another description of $\upm$, which is useful for reversed path space.
Define a collection $q_p(y,z)$ of Markov transition matrices given by 
\[
        q_p(y,z) \Def
        \frac{\ssim[*][z][p-2]}{\ssim[*][y][p-1]} \ssim[z][y][1],
\]
which is a transition matrix as for any $y \in \statespace$ and any $p \in \NN,$
\[
\sum_{z \in \statespace} q_p(y,z)
=
\sum_{z \in \statespace} \frac{\ssim[*][z][p-2]\ssim[z][y][1]}{\ssim[*][y][p-1]}
=1.
\]
This allows us to express $\upm$ for any $\bold{x}=x_1 x_2 \cdots x_p \in \pathspace$ as
\begin{equation}
  \upm (\{\bold{x}\}) = \prod_{i=1}^{p-1} q_{p-i+1} (x_{p-i+1}, x_{p-i}) \, \one[y=x_p]. 
  \label{eq:upmr}
\end{equation}

As $\ssim[\cdot][\cdot][1]$ is a primitive matrix, by the Perron-Frobenius theorem, the limit
\begin{equation}
        \label{eq:pstar}
        \tm^*(y,z) = \lim_{p\to \infty} q_p(y,z)= \frac{{\hat \sigma}(z) \ssim[z][y][1]}{\lambda {\hat \sigma}(y)}
\end{equation}
exists for all $y,z \in \statespace.$ We also have that there is a constant $c > 0$ so that for all $p \in \NN,$
\begin{equation}
        \sup_{y,z\in\statespace} | \tm^*(y,z) - q_p(y,z)| \leq e^{-cp}.
        \label{eq:pstarconvergence}
\end{equation}

Let $\mathfrak{a}$ be any probability measure on $\statespace,$ and define a Markov measure $\rmpm$ on $\pathspace[\infty]_r$ by, for any cylinder set $[\bold{x}] = [x_1x_2\dots x_p],$
\begin{equation}
  \rmpm\left( [\bold{x}] \right)
  = \mathfrak{a}({x_1}) \prod_{i=1}^{p-1} \tm^*(x_i, x_{i+1}).
  \label{eq:rmpm}
\end{equation}

\section{Main results}
\label{sec:results}


Let $\theta$ be a primitive substitution and let $u=(u_n)_{n=1}^{\infty}$ be any fixed point of $\theta$. Denote the Perron-Frobenius eigenvalue of the $\theta$-matrix by $\lambda.$ 
For eigenfunctions $f$ of $M$ with eigenvalue $\lambda_f$ having $|\lambda_f| < 1,$ it is well known that the Birkhoff sums $\functional(u_{\leq N})$ stay bounded.  We show they also have distributional convergence to a bounded random variable.

\begin{Theorem}
\label{thm:<1}
Let $f$ be a left eigenfunction of $M$ with eigenvalue $\lambda_f$ with $|\lambda_f| < 1$. 
Let $K_N$ be a random variable with uniform distribution on $\left\{ 1,2,\ldots, N \right\}.$ For $\bold{x} = (v_1,k_1) (v_2,k_2) \cdots \in \pathspace[\infty]$, define $W_f (\bold{x}) = \sum_{i=1}^{\infty} \lambda_f^{i-1} \functional(\theta(v_i)_{< k_i}).$
Then, 
\[
  \functional(u_{\leq K_N})  \Rightarrow W_f(\bold{X}),
\]
where $\bold{X}$ has the distribution of $\champm[\infty].$
\end{Theorem}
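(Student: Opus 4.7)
My plan is to combine the renormalization identity \eqref{eq:renormalization} with convergence of finite-dimensional marginals of the coding measure $\nu_N$ to $\champm[\infty]$. Because $|\lambda_f|<1$, a uniform tail estimate reduces weak convergence of the Birkhoff sum to convergence of those marginals on the finite set $\pathspace[k]$.

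First, for $n\leq|\theta^p(a)|$ with $p$ large enough that $u_{[1,n)}=\theta^p(a)_{[1,n)}$, write $\Psi_{a,p}(n)=(v_1,k_1)\cdots(v_p,k_p)$. Applying $\functional$ to the decomposition of $u_{[1,n)}$ preceding \eqref{eq:11} and invoking \eqref{eq:renormalization} termwise yields $\functional(u_{[1,n)})=\sum_{i=1}^{p}\lambda_f^{i-1}\functional(\theta(v_i)_{<k_i})$. Setting $n=K_N+1$ identifies $\functional(u_{\leq K_N})$ as a function of the random path $\Psi_{a,p}(K_N+1)\sim\nu_N$. Since $\statespace$ is finite, $C:=\max_{(b,j)\in\statespace}|\functional(\theta(b)_{<j})|<\infty$, so
\[
\left|\functional(u_{\leq K_N})-\sum_{i=1}^{k}\lambda_f^{i-1}\functional(\theta(v_i)_{<k_i})\right|\leq \frac{C\,|\lambda_f|^k}{1-|\lambda_f|},
\]
uniformly in $N$ and $K_N$, and the same bound holds for $W_f(\bold{X})$ and its $k$-truncation. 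Using the bounded-Lipschitz metric from Dudley's theorem, the problem thus reduces to showing that for each fixed $k$, the first-$k$ marginal of $\nu_N$ converges pointwise on $\pathspace[k]$ to the first-$k$ marginal of $\champm[\infty]$.

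For that marginal convergence, Proposition~\ref{prop:decomposition} writes $\nu_N$ as a mixture of $\upm[(v_\ell,j)][\ell]$ with weights $\ssim[*][(v_\ell,j)][\ell]/N$. Summing out the last $\ell-k$ coordinates in \eqref{eq:upm}, the first-$k$ marginal of $\upm[y][\ell]$ (for $\ell\geq k+1$) equals
\[
\frac{\ssim[y_1][y][\ell-1]}{\ssim[*][y][\ell-1]}\prod_{i=1}^{k-1}h^{y}_{\ell-i+1}(y_i,y_{i+1}).
\]
By \eqref{eq:ssimlimit} and \eqref{eq:pconvergence}, this converges as $\ell\to\infty$, uniformly in $y\in\statespace$ and $(y_1,\ldots,y_k)\in\pathspace[k]$, to $\cha(y_1)\prod_{i=1}^{k-1}\tm(y_i,y_{i+1})$, i.e.\ the first-$k$ marginal of $\champm[\infty]$. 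Since $\ssim[*][y][\ell]\asymp\lambda^\ell$ while $N\asymp\lambda^p$ with $p\to\infty$, the total weight assigned by $\nu_N$ to levels $\ell\leq L$ is $O(\lambda^{L-p})\to 0$ for any fixed $L$. Given $\epsilon>0$, choose $L$ after which the $\upm[y][\ell]$-marginals are within $\epsilon$ of the target uniformly in $y$; then for $p$ large, the contribution from $\ell\leq L$ has weight below $\epsilon$ and the contribution from $\ell>L$ is a mixture of measures within $\epsilon$ of the target. Letting $p\to\infty$ and then $\epsilon\to 0$ yields the pointwise marginal convergence and hence the theorem.

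The main technical obstacle is the last step: carefully handling the weighted-mixture structure of Proposition~\ref{prop:decomposition} while simultaneously establishing uniformity (in $y$ and in the path) of the convergence $\upm[y][\ell]\to\champm[\infty]$ on first-$k$ marginals. This rests entirely on combining the Perron--Frobenius asymptotic \eqref{eq:ssimlimit} with the exponential rate \eqref{eq:pconvergence} and exploiting the finiteness of $\statespace$ to upgrade pointwise bounds to uniform ones.
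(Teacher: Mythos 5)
Your proposal is correct and follows essentially the same route as the paper: the path-space coding combined with the renormalization identity, the uniform geometric tail bound coming from $|\lambda_f|<1$, and the convergence of $\nu_N$ to $\champm[\infty]$ on initial coordinates of the path. The only real difference is presentational: you re-derive the fixed-$k$ marginal convergence directly from Proposition~\ref{prop:decomposition}, \eqref{eq:upm}, \eqref{eq:ssimlimit} and \eqref{eq:pconvergence}, whereas the paper invokes the stronger total-variation estimate of Proposition~\ref{prop:Nmpm} (proved via coupling), which yields the same conclusion.
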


In the case that $|\lambda_f| > 1,$ on the other hand, the Birkhoff sums will not in general have a distributional limit.  In fact, there are many distributional limit points of ${\functional(u_{\leq K_{N}})} { N^{-\log_\lambda(\lambda_f)}}$ as $N\to \infty.$  We show that by choosing different subsequences, it is possible to get different distributional limits, although their distributions are closely related.
\begin{Theorem}
\label{thm:>1}
Let $a = u_1,$
and let $f$ be a left eigenfunction of $M$ with eigenvalue $\lambda_f$ having $\lambda > |\lambda_f| > 1$.
Let $N_\ell$ be a sequence with $N_\ell \to \infty$ so that 
\[
  \Psi_a^r(N_\ell) \to \mathbf{z} = (\rho_1,\kappa_1)(\rho_2,\kappa_2)\dots \in \pathspace[\infty]_r
\]
Suppose $\hat \rho$ is normalized so that $\sum_{x \in \statespace} \hat \rho(x) =1.$
Define a probability measure on $\statespace$ by
	 \[
\mathfrak{a}( (v,k) ) = 
\frac{1}{R} {\hat \sigma}((v,k)) \sum_{q=1}^\infty \one[ v = \rho_q \text{ and } k < \kappa_q] \lambda^{1-q},
	 \]
	 where $R > 0$ is chosen so that $\mathfrak{a}$ is a probability measure.

Let $K_{N_\ell}$ be a random variable with uniform distribution on $\left\{ 1,2,\ldots, N_\ell \right\}.$ For $\bold{x} = (v_1,k_1) (v_2,k_2) \cdots \in \pathspace[\infty]$, define $U_f (\bold{x}) = \sum_{i = 1}^{\infty} \lambda_f^{-i} \functional(\theta(v_i)_{< k_i}).$  
Let $p= p(\ell) \in \NN$ be such that $|\theta^{p-1}(a)| < N_\ell \leq |\theta^p(a)|.$
Then, 
\[
  \frac{\functional(u_{\leq K_{N_\ell}})}{ N_\ell^{\log_\lambda|\lambda_f|}
e^{ip(\ell)\arg\lambda_f}
} \Rightarrow 
  \frac{U_f(\bold{X})}{R^{\log_\lambda|\lambda_f|}},
\]
where $\bold{X}$ has the distribution of
\(
\rmpm.
\)
\end{Theorem}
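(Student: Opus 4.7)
The plan is to proceed in three stages: first use the eigenfunction renormalization to reduce the Birkhoff sum to a weighted reversed-path functional, then identify the deterministic scaling constant $R$, and finally show the distributional convergence via the mixture representation of $\nu_{N_\ell}$.

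First I would apply the identity $\functional(\theta^k(w)) = \lambda_f^k \functional(w)$ to each factor in the path-space decomposition of $u_{<K_{N_\ell}}$. Writing the forward path as $\Psi_{a,p}(K) = (v_1^K,k_1^K)\cdots(v_p^K,k_p^K)$, this yields
\[
  \functional(u_{<K}) = \sum_{j=1}^p \lambda_f^{j-1}\functional(\theta(v_j^K)_{<k_j^K}).
\]
Dividing by $\lambda_f^p$ and reindexing by $i = p-j+1$ (the reversed-path coordinate) gives $\lambda_f^{-p}\functional(u_{<K}) = U_f(X^K)$, where $X^K$ is the reversed path of $K$ embedded in $\pathspace[\infty]_r$ by appending $(a,1)^\infty$; the terms with $i>p$ vanish because $\theta(a)_{<1}$ is empty. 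The boundary term $f(u_{K_{N_\ell}})=O(1)$ is absorbed into the normalization, since the scaling factor $N_\ell^{\log_\lambda |\lambda_f|}$ grows like $|\lambda_f|^p \to \infty$.

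Second, I would compute the scaling constant. Using the Perron-Frobenius asymptotic $|\theta^k(w)| \sim \lambda^k (\sum_a\rho(a))(\sigma\cdot \ell(w))$ together with $\sum_a\rho(a)=1$ and the identity $\sigma(\theta(v)_k) = \lambda \hat\sigma(v,k)$, the length of the $j$-th factor $\theta^{j-1}(\theta(v_j^{N_\ell})_{<k_j^{N_\ell}})$ behaves as $\lambda^{j}\sum_{k<k_j^{N_\ell}}\hat\sigma(v_j^{N_\ell},k)$. Summing and dividing by $\lambda^p$, then reindexing with $m=p-j+1$ and using $\Psi_a^r(N_\ell)\to \mathbf{z}$ coordinatewise, dominated convergence (with geometric majorant $\lambda^{1-m}$) gives $N_\ell/\lambda^p \to R$. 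Consequently
\[
\frac{\functional(u_{\leq K_{N_\ell}})}{N_\ell^{\log_\lambda|\lambda_f|}e^{ip\arg\lambda_f}}
= \Bigl(\frac{\lambda^p}{N_\ell}\Bigr)^{\log_\lambda|\lambda_f|} U_f(X^{K_{N_\ell}}) + o(1) \Longrightarrow \frac{U_f(X^{K_{N_\ell}})}{R^{\log_\lambda|\lambda_f|}}\ \text{(in the sense below)},
\]
so the remaining task is to show $U_f(X^{K_{N_\ell}}) \Rightarrow U_f(\bold{X})$ for $\bold{X}\sim \rmpm$.

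Third, I would prove the distributional convergence using Proposition~\ref{prop:decomposition} and \eqref{eq:upmr}. The law of $X^{K_{N_\ell}}$ decomposes as a convex mixture over blocks indexed by the deviation level $\ell'$ and the digit $j<k_{\ell'}^{N_\ell}$; in the reversed index $m=p-\ell'+1$, the block's weight is $\ssim[*][(v_{\ell'}^{N_\ell},j)][\ell']/N_\ell$, and conditional on the block the reversed path is an inhomogeneous Markov chain with transitions $q_{p},q_{p-1},\ldots$ initialized at $(v_{\ell'}^{N_\ell},j)$. Using $\ssim[*][y][p] \sim \lambda^p \hat\sigma(y)$ and $N_\ell/\lambda^p \to R$, the block weight converges to $\lambda^{1-m}\hat\sigma(\rho_m,j)/R$; aggregating contributions from all $m$ that realize a given state $(v,k)\in\statespace$ produces exactly the sum defining $\mathfrak{a}((v,k))$. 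Estimate \eqref{eq:pstarconvergence} replaces the $q_{\cdot}$ transitions by $\tm^*$ up to an exponentially decaying error. For weak convergence I would work on bounded Lipschitz test functions of $U_f$, truncate $U_f$ at finite depth $M$ (on which it depends only on $X_1,\ldots,X_M$), bound the tail by $C\sum_{i>M}|\lambda_f|^{-i}$, and pass to the limit termwise using the weight and transition convergences.

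The main obstacle is step three. The subtlety is that a single $(v,k)\in\statespace$ may arise as $(\rho_m,j)$ for several reversed levels $m$; correctly collecting these multiplicities into the definition of $\mathfrak{a}$ and simultaneously balancing the three limits (the random $K$, the truncation depth $M$, and the Markov-stationarity error) is the delicate combinatorial part. Once this bookkeeping is done, the Markov property transports $\tm^*$-chain increments appearing in the $\upm$-reversed decomposition directly to the Markov increments of $\rmpm$, completing the proof.
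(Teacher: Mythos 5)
Your proposal is essentially correct and follows the same route as the paper's proof: reduce via the eigenfunction renormalization to the reversed-path functional $U_f$, identify $R=\lim_\ell N_\ell/\lambda^{p(\ell)}$ from Perron--Frobenius asymptotics, prove $\tilde\nu_{N_\ell}\Rightarrow\rmpm$ through the block decomposition with weights $\ssim[*][\cdot][\cdot]/N_\ell$ and the $q_p\to\tm^*$ estimate \eqref{eq:pstarconvergence}, and then pass to the limit for the (uniformly continuous, bounded) functional $U_f$ via finite-depth truncation. The only slip worth flagging is in the normalization: you invoke $\sum_a\rho(a)=1$, but the theorem's hypothesis is $\sum_{x\in\statespace}\hat\rho(x)=1$, which forces $\sum_a\rho(a)=\lambda^{-1}$ (since $\sum_x\hat\rho(x)=\sum_b|\theta(b)|\rho(b)=\lambda\sum_a\rho(a)$); so the constant arising from your length computation $|\theta^{k}(w)|\sim\lambda^k\big(\sum_a\rho(a)\big)\big(\sigma\cdot\incidence[w]\big)$ must be tracked with the correct normalization. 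This is a bookkeeping issue rather than a structural gap, and the paper itself obtains $R$ by a slightly different but equivalent route, namely the asymptotic $\lambda^{-n}\ssim[*][y][n]\to\hat\sigma(y)\sum_x\hat\rho(x)$ applied to the block sizes in Proposition~\ref{prop:decomposition}.
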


\begin{Remark}
	Alternatively Theorem \ref{thm:>1} can be formulated as 
	\[
		\frac{\functional(u_{\leq K_{N_\ell}})}{ \lambda_f^{p(\ell)}}
 \Rightarrow 
  \frac{U_f(\bold{X})}{R^{\log_\lambda|\lambda_f|}}.
\]

\end{Remark}

\begin{Remark}
  In both of Theorems~\ref{thm:<1} and \ref{thm:>1}, the support of the limiting measure is a Cantor set.  This can be seen by noting that $W_f$ (and $U_f$) are continuous functions from $\statespace^{\infty}$ with the product topology, which is a Cantor set.
\end{Remark}

We will soon formulate our main theorems when $|\lambda_f| = 1,$ but before doing so, we give a characterization of eigenfunction coboundaries.  Recall that a continuous function $f: X \to \C$ is called a coboundary if there exists a continuous function $g$ such that $f = g - g \circ T$. By the Gottschalk-Hedlund theorem \cite{GH}, if $X$ is a compact metric space and $T:X \rightarrow X$ is a minimal homeomorphism, and if $f:X\to \C$ is continuous, then $f$ is a coboundary if and only if there exists $K< \infty$ such that $|\sum_{n=1}^N f(T^nx)| \leq K$ for all $N \in \N$ and $x \in X$.  Kornfeld and Lin \cite{KL} obtained a more general result: if $X$ is a compact Hausdorff space, and $T$ is an irreducible Markov operator on $C(X)$, then $\sup_N \|\sum_{n=1}^N f \circ T^n \| < \infty$ if and only if $f$ is a coboundary. 

For left eigenfunctions $f$ of $M,$ we show a further characterization of coboundaries.
\begin{Proposition}
\label{prop:coboundary} Let $(X_{\theta}, \mathcal{B}, \mu, T)$ be a substitution dynamical system associated to a primitive substitution $\theta$.
        Suppose that $f$ is a left eigenfunction of $M$ with eigenvalue $\lambda_f$ having $|\lambda_f|=1.$  Then the following are equivalent.
        \begin{enumerate}[(i)]
                \item The function $w \mapsto f(w_1)$ from $X_\theta \to \C$ is a coboundary.
                \item There exists $w \in X_{\theta}$ such that $\sup_{N} |\functional[f](w_{\leq N})| <\infty.$
                \item There is a function $h : \alphabet \to \C$ so that for all $(a,j) \in \statespace$
                        \[
                                \functional[f](\theta(a)_{<j})
                                =
                                \int_{\alphabet} f(c)\sma(dc)
                                +h(a) - \lambda_f^{-1} h(\theta(a)_j).
                        \]
                \item There is a function $h : \alphabet \to \C$ so that the following hold. 
        \begin{enumerate}
                \item
                        For all $(a,j) \in \statespace$ with $1 \leq j < |\theta(a)|,$
                        \[
                                f(\theta(a)_j) = \lambda_f^{-1}(h(\theta(a)_j) - h(\theta(a)_{j+1})).
                        \]
                \item 
                        For all $a \in \alphabet,$
                        \[
                                \int_{\alphabet} f(c)\,\sma(dc)
                                + h(a) - \lambda_f^{-1} h(\theta(a)_1) = 0.
                        \]
        \end{enumerate}
        \end{enumerate}
\end{Proposition}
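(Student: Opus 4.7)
My plan is to organize the four conditions into two easy equivalences joined by a harder chain. The easy equivalences are (iii)$\Leftrightarrow$(iv), which is purely algebraic, and (i)$\Leftrightarrow$(ii), which is an application of the Gottschalk--Hedlund theorem to the minimal system $(X_\theta,T)$ with the continuous function $w\mapsto f(w_1)$. The remaining task is then to close the loop with (iii)$\Rightarrow$(ii) (computational) and (ii)$\Rightarrow$(iii) (the main obstacle).

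For (iii)$\Leftrightarrow$(iv), setting $j=1$ in (iii) gives (iv)(b) since $\functional[f](\theta(a)_{<1})=0$, and taking the difference of (iii) at consecutive values of $j$, using $\functional[f](\theta(a)_{<j+1})-\functional[f](\theta(a)_{<j})=f(\theta(a)_j)$, yields (iv)(a). Conversely, telescoping (iv)(a) across $i=1,\dots,j-1$ produces $\lambda_f\functional[f](\theta(a)_{<j})=h(\theta(a)_1)-h(\theta(a)_j)$, and substituting (iv)(b) recovers (iii).

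For (iii)$\Rightarrow$(ii), I would work with a fixed point $u=\theta^k(u)$ and use the path-space decomposition of $u_{<N}$. Writing $\Psi_{u_1,p}(N)=(v_1,k_1)\cdots(v_p,k_p)$, the renormalization identity \eqref{eq:renormalization} gives $\functional[f](u_{<N})=\sum_{i=1}^p \lambda_f^{i-1}\functional[f](\theta(v_i)_{<k_i})$. Substituting (iii) and using that $\theta(v_i)_{k_i}=v_{i-1}$ for $i\ge 2$, the telescoping collapses this to
\[
\left(\int_{\alphabet} f\,d\sma\right)\sum_{i=1}^{p}\lambda_f^{i-1}+\lambda_f^{p-1}h(v_p)-\lambda_f^{-1}h(\theta(v_1)_{k_1}).
\]
Since $|\lambda_f|=1$ and $h$ takes finitely many values, the last two terms are bounded, and the first is a bounded geometric sum when $\lambda_f\neq 1$. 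When $\lambda_f=1$, iterating (iv)(b) along the sequence $a,\theta(a)_1,\theta^2(a)_1,\dots$, which is eventually periodic because $\alphabet$ is finite, forces $\int f\,d\sma=0$. Hence $\functional[f](u_{<N})$ is uniformly bounded, and this extends to all of $X_\theta$ by minimality, yielding (ii).

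The hard part is (ii)$\Rightarrow$(iii). My plan is to invoke Gottschalk--Hedlund to obtain a continuous transfer function $g:X_\theta\to\C$ satisfying $f(w_1)=g(w)-g(Tw)$, and then to extract a letter-level $h:\alphabet\to\C$ from $g$ using the substitution structure. For each $(a,j)\in\statespace$ and each $w\in X_\theta$ whose initial segment of length $j-1$ equals $\theta(a)_{<j}$, one has $\functional[f](\theta(a)_{<j})=g(w)-g(T^{j-1}w)$, so (iii) amounts to saying that the right-hand side depends on $w$ only through $(a,j)$ and takes a specific affine form in $h$. To define $h(a)$, I plan to integrate $g$ against the natural Markov measure on cylinders beginning with $a$ from Section~\ref{sec:measures}; the drift $\int f\,d\sma$ will then arise through the $\hat\sigma\hat\rho$-integration highlighted in Example~\ref{ex:main}. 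The delicate step is verifying that the constructed $h$ is single-valued on $\alphabet$, consistent across all representations of a given letter either as $\theta(a)_j$ or as the first letter of a $\theta$-image, and this consistency is exactly what the boundedness of the Birkhoff sums, combined with $|\lambda_f|=1$ and the renormalization \eqref{eq:renormalization}, should enforce.
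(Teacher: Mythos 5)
Your treatment of (i)$\Leftrightarrow$(ii) (Gottschalk--Hedlund on the minimal system), of (iii)$\Leftrightarrow$(iv) (the algebraic telescoping), and of (iii)$\Rightarrow$(ii) (path-space decomposition of the fixed point, telescoping via $v_{i-1}=\theta(v_i)_{k_i}$, and forcing $\int f\,d\sma=0$ around a cycle when $\lambda_f=1$) matches the paper's proof essentially line for line. The problem is the remaining implication.

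For (ii)$\Rightarrow$(iii) you have a plan, not a proof, and the step you defer --- extracting a single-valued $h:\alphabet\to\C$ from the transfer function $g$ and verifying the \emph{exact} identity in (iii) --- is the entire content of the implication. Two concrete obstructions: first, the Gottschalk--Hedlund identity gives $\functional[f](\theta(a)_{<j})=g(v)-g(T^{j-1}v)$ with no factor of $\lambda_f^{-1}$ anywhere, so the $\lambda_f^{-1}h(\theta(a)_j)$ term cannot come from this identity alone; it must come from a self-similarity of $g$ under $\theta$ (something like $g\circ\theta=\lambda_f g+\mathrm{const}$, itself requiring a separate minimality argument), which you never establish. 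Second, averaging $g$ over the cylinder $[a]$ and averaging $g\circ T^{j-1}$ over $\theta([a])$ do not produce the same functional of the letter $\theta(a)_j$, since $T^{j-1}\theta([a])$ is a proper subset of the cylinder $[\theta(a)_j]$; so the proposed definition of $h$ is not obviously consistent, and "boundedness should enforce this" is not an argument. The paper closes the loop differently: it proves the contrapositive $\neg(iii)\Rightarrow\neg(ii)$ via Proposition~\ref{Prop:service} and Theorem~\ref{thm:Markov}. The key point there is the dichotomy for additive functionals of the path-space Markov chain: either a transfer function $\check h$ on $\statespace$ with $P^*P\check h=\check h$ exists (and the computation of $\tm\tm^*$ as a block-stochastic matrix shows such $\check h$ are exactly the functions $(a,j)\mapsto h(a)$, which translates $\check h$ into the $h$ of condition (iii)), or the Birkhoff sums satisfy a nondegenerate CLT on scale $\sqrt{\log N}$ and are therefore unbounded along the fixed point, contradicting (ii) via (i)$\Leftrightarrow$(ii). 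You should either route your argument through that dichotomy or actually carry out the construction of $h$, including the two points above.
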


\begin{Remark}
        Say that a function $f : \alphabet \to S^1$ is a coboundary in the sense of Host (see \cite{Host} or \cite[Definition 7.3.13]{Fogg}) if there is a function $h : \alphabet \to S^1$ so that for all admissible $2$-letter words $ab,$ $h(b) = h(a)f(a).$
        If $f$ satisfies condition $(iv)$ of Proposition~\ref{prop:coboundary}, then $a \mapsto e^{i \Re f(a)}$ and $a \mapsto e^{i \Im f(a)}$ are coboundaries in the sense of Host.
        \label{rem:host}
\end{Remark}

For an eigenfunction $f$ of $M$ which is not a coboundary with eigenvalue $\lambda_f$ having $|\lambda_f| = 1,$ let $Z_f$ be the following normal random variable.
\begin{enumerate}
	\item If $\lambda_f \not\in \R,$ then $Z_f$ is a complex normal variable.  Further, $Z_f = X + iY,$ where $X,Y$ are independent, centered normal distributions with $\Exp X^2 = \Exp Y^2,$ and letting $g = f - \int f\,d\sma,$ we have
 \[
	\Exp |Z_f|^2 = \Exp |g(X_1)|^2 + \sum_{k=2}^\infty 2\Exp\Re[\lambda_f^{k-1}g(X_1)\overline{g(X_k)}], 
	\]
	where $(X_1,X_2,\ldots)$ has the distribution of $\mpm[\infty]$.  By Proposition \ref{prop:coboundary} and Theorem~\ref{thm:Markov}, this variance is $0$ if and only if $f$ is a coboundary. 
\item If $\lambda_f \in \R$ and $f$ is real, then we have that $Z_f$ is real and has the same variance as above.  Again, the variance is $0$ if and only if $f$ is a coboundary.
	\end{enumerate}

\begin{Theorem}
\label{thm:clt1}
Let $f$ be a left eigenfunction of $M$ with eigenvalue $\lambda_f$ with $|\lambda_f| = 1$ so that $f$ is not a coboundary. 
Let $K_N$ be a random variable with uniform distribution on $\left\{ 1,2,\ldots, N \right\}.$  Then if $\lambda_f \neq 1,$ as $N\to\infty$
\[
\frac{\functional(u_{\leq K_N})
}{\sqrt{\log_\lambda(N)}}
\Rightarrow
Z_f.
\]
If $\lambda_f = 1$ then 
\[
\frac{\functional(u_{\leq K_N})
-
\log_\lambda(N)
\int_{\alphabet} f(a)\sma(da)
 }{\sqrt{\log_\lambda(N)}}
\Rightarrow
Z_f.
\]
\end{Theorem}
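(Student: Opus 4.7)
The plan is to reduce the theorem to a central limit theorem for twisted additive functionals along a Markov chain, and then invoke Theorem~\ref{thm:Markov}. The starting point is the path-space coding $\Psi_{a,p}$ combined with the renormalization identity \eqref{eq:renormalization}, which gives
\[
\functional(u_{<n}) = \sum_{\ell=1}^p \lambda_f^{\ell-1} G(v_\ell,k_\ell),\qquad G(v,k) \Def \functional(\theta(v)_{<k}),
\]
whenever $\Psi_{a,p}(n)=(v_1,k_1)\cdots(v_p,k_p)$, with $p=p(N)\sim \log_\lambda N$. Swapping $u_{\leq K_N}$ for $u_{<K_N}$ costs only $\|f\|_\infty$, which is negligible against the $\sqrt{\log_\lambda N}$ scaling. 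So the target becomes a CLT for $S_p := \sum_{\ell=1}^p \lambda_f^{\ell-1} G(X_\ell)$ with $(X_1,\ldots,X_p)\sim \nu_N$ on $\pathspace[p]$.

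Next I would argue that $\nu_N$ may be replaced by the stationary Markov measure $\mpm[p]$ at the cost of an error negligible in the CLT limit. Proposition~\ref{prop:decomposition} writes $\nu_N$ as a mixture of inhomogeneous Markov measures $\upm[\cdot][\ell]$ whose weights, by the Perron--Frobenius asymptotic \eqref{eq:ssimlimit}, decay like $\lambda^{\ell-p}$; consequently the components with $\ell \leq p-L$ carry total mass $O(\lambda^{-L})$ and can be discarded after choosing, say, $L=\sqrt{\log p}$. On the dominant part, formula \eqref{eq:upm} combined with the exponential bound \eqref{eq:pconvergence} shows that $\upm$ differs from $\mpm[p]$ only through boundary corrections concentrated at the two endpoints. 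Since the Markov CLT limit is insensitive both to the initial/terminal distribution of the chain and to $O(1)$-length boundary layers, we can proceed under the stationary measure $\mpm[p]$ with kernel $\tm$ and invariant measure $\sm$.

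Invoking Theorem~\ref{thm:Markov}, which provides the requisite CLT for twisted additive functionals of stationary Markov chains, two cases arise. For $\lambda_f=1$ the centering is $p\,\Exp_{\sm} G = p\int f\,d\sma$ by \eqref{eq:com}; substituting $p=\log_\lambda N + O(1)$ gives the displayed $\log_\lambda(N)\int f\,d\sma$. For $|\lambda_f|=1$ with $\lambda_f\neq 1$, the geometric series $\sum_{\ell=1}^p \lambda_f^{\ell-1}$ is bounded, so the centering contribution is $O(1)=o(\sqrt p)$; decomposing $S_p$ into real and imaginary parts and applying a bivariate Markov CLT, oscillation of the weights $\lambda_f^{\ell-1}$ combined with exponential mixing of $(X_\ell)$ both annihilates the cross-covariance and equalizes the two variances, producing the isotropic complex normal $Z_f$ with the stated $\Exp|Z_f|^2$. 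Nondegeneracy of the variance is guaranteed by our assumption that $f$ is not a coboundary: its vanishing would produce, via the eigenvector identity $\hat f(b,k)=\lambda_f^{-1}f(\theta(b)_k)$, a solution $h$ of condition (iii) of Proposition~\ref{prop:coboundary}.

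The main obstacle, apart from setting up Theorem~\ref{thm:Markov} itself, is coordinating three complications simultaneously: the mixture structure of $\nu_N$, the inhomogeneity of the Markov chains underlying $\upm$, and, when $\lambda_f\notin\R$, the complex twisted sum. In particular, when $\lambda_f$ is nonreal one must verify not only the claimed value of $\Exp|Z_f|^2$ but also that the pseudo-covariance $\Exp Z_f^2$ vanishes, so that the limit is identified as the specific isotropic complex normal rather than a more general two-dimensional Gaussian.
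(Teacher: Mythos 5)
Your proposal is correct and follows essentially the same route as the paper: the paper packages the path-space coding, the replacement of $\nu_N$ by $\mpm[p_N]$ via the coupling results (Propositions~\ref{prop:mpmcoupling} and \ref{prop:Nmpm}), and the appeal to Theorem~\ref{thm:Markov} into Proposition~\ref{Prop:service}, with non-degeneracy of the variance reduced, exactly as you indicate, to condition (iii) of Proposition~\ref{prop:coboundary} through the analysis of solutions of $P^*Ph=h$. The only cosmetic difference is that the paper truncates a boundary layer of length $r=[(\log p_N)^2]$ (rather than $O(1)$) before invoking the total-variation estimates, which is harmless since its contribution is $o(\sqrt{\log N})$.
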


The combination of Theorems \ref{thm:<1}, \ref{thm:>1}, and \ref{thm:clt1} 
allows us to give a new complete description of systems with \emph{bounded discrepancy} (the first such description is due to \cite{A}).  Say that a fixed point $u$ has bounded discrepancy if for every $a \in \alphabet,$ with $f_a : \alphabet \to \R$ given by $f_a(b) = \one[a=b],$
\[
  \sup_{N \in \NN} |\functional[{f_a}](u_{\leq N}) - Nq\left( \left\{ a \right\} \right)| < \infty,
\]
where $q$ is the occurrence frequency of $a,$\,i.e.\,\(
q(a) = \lim_{N\to\infty} N^{-1}\functional[{f_a}](u_{\leq N}).
\)
\begin{Corollary}
  Suppose that $u$ is a fixed point of a primitive substitution $\theta.$ 
  Then $u$ has bounded discrepancy if and only if 
  \begin{enumerate}
    \item All eigenvalues of $M$ except the Perron-Frobenius eigenvalue have modulus less than or equal to $1.$
    \item The geometric multiplicity of each eigenvalue of modulus $1$ equals its algebraic multiplicity, i.e.\,each Jordan block in the Jordan form of $M$ having eigenvalue of modulus $1$ is $1$-dimensional.
    \item Each eigenfunction $f$ with eigenvalue equal to $1$ is a coboundary in the sense of Proposition~\ref{prop:coboundary}.
  \end{enumerate}
\end{Corollary}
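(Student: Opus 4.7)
\emph{Proof plan.} The strategy is to reduce the discrepancy question to Birkhoff sums of left (generalized) eigenfunctions of $M$ on the row space $\C^{\alphabet},$ and then read off each condition from Theorems~\ref{thm:<1}, \ref{thm:>1}, \ref{thm:clt1} together with Proposition~\ref{prop:coboundary}.

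The first step is to remove the Perron-Frobenius component. Setting $\tilde f_a = f_a - q(a)\mathbf{1},$ one checks that $S_{\tilde f_a}(u_{\leq N}) = S_{f_a}(u_{\leq N}) - N q(a),$ so bounded discrepancy is equivalent to $\sup_N |S_{\tilde f_a}(u_{\leq N})| < \infty$ for every $a.$ Pairing with the right Perron-Frobenius eigenvector, $\tilde f_a \cdot \rho = \rho(a) - q(a)\sum_b \rho(b) = 0$ by the definition $q(b) = \rho(b)/\sum_c \rho(c),$ so each $\tilde f_a$ lies in the invariant complement $V \subset \C^{\alphabet}$ of the Perron-Frobenius line. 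Since the $\tilde f_a$'s satisfy only the single linear relation $\sum_a q(a)\tilde f_a = 0,$ they span $V,$ so bounded discrepancy is equivalent to $\sup_N |S_g(u_{\leq N})|<\infty$ for every $g$ in each non--Perron-Frobenius generalized eigenspace of $M.$

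Next I would fix a non--Perron-Frobenius eigenvalue $\lambda_f$ and a $g$ in its generalized eigenspace, and analyze $S_g(u_{\leq N})$ via the path-space decomposition $u_{\leq N} = \theta^{p-1}(\theta(v_p)_{<k_p})\cdots\theta(v_1)_{<k_1}$ with $p = \Theta(\log_\lambda N).$ For a true eigenvector the renormalization identity \eqref{eq:renormalization} gives $S_g(u_{\leq N}) = \sum_{i=1}^p \lambda_f^{i-1} S_g(\theta(v_i)_{<k_i});$ for a Jordan chain $g_0,\dots,g_{d-1}$ with $(M-\lambda_f I)g_k = g_{k-1}$ an induction yields an analogous identity in which the scalar $\lambda_f^{i-1}$ is replaced by a weight $\binom{i-1}{j}\lambda_f^{i-1-j},$ a polynomial of degree $\leq d-1$ in $i.$ Four cases then arise: $|\lambda_f|<1$ gives a bounded geometric sum; $|\lambda_f|>1$ gives growth of order $N^{\log_\lambda|\lambda_f|}$ by Theorem~\ref{thm:>1}; $|\lambda_f|=1$ with a Jordan block of size $\geq 2$ gives polynomial-in-$p$ growth from the binomial weights; and $|\lambda_f|=1$ with a one-dimensional block gives $\sqrt{\log N}$ fluctuations by Theorem~\ref{thm:clt1} unless $g$ is a coboundary, in which case Proposition~\ref{prop:coboundary}(ii) gives outright boundedness.

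Sufficiency of (1)--(3) is obtained by checking the first and fourth cases under those hypotheses; necessity follows by exhibiting, for each would-be violation, a letter $a$ whose $\tilde f_a$ has nonzero projection onto the offending generalized eigenspace (such an $a$ exists because the $\tilde f_a$'s span $V$). The main technical obstacle I anticipate is the Jordan-chain renormalization identity together with a quantitative lower bound on the polynomial weights strong enough to force unbounded Birkhoff sums along a suitable subsequence of $N;$ once that identity is in place the rest is a clean spectral decomposition combined with the three limit theorems. One subtlety: condition (3) as written only addresses $\lambda_f = 1,$ whereas the argument above requires every modulus-$1$ eigenfunction to be a coboundary, so I would adopt this slightly stronger reading throughout and remark that for $\lambda_f = 1$ the coboundary condition is precisely what Proposition~\ref{prop:coboundary}(iv) encodes (including the vanishing of $\int f\,\sma$ that kills the $\log_\lambda N$ drift in Theorem~\ref{thm:clt1}).
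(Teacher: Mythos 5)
Your plan follows the paper's proof almost exactly: the same reduction to the vector space $\mathcal{W}$ of mean-zero functions with bounded Birkhoff sums, the same observation that the centered indicators span the complement of the Perron--Frobenius line so that bounded discrepancy is equivalent to every non-Perron-Frobenius generalized eigenfunction lying in $\mathcal{W}$, and the same case analysis via Theorems~\ref{thm:<1}, \ref{thm:>1}, \ref{thm:clt1} and Proposition~\ref{prop:coboundary}. Your reading of condition (3) as requiring the coboundary property for \emph{every} eigenvalue of modulus $1$ (not just $\lambda_f=1$) is also the reading the paper's proof actually uses, since the necessity argument invokes Theorem~\ref{thm:clt1} for all $|\lambda_f|=1$; compare Example~\ref{ex:rational}, where $\lambda_f=-1$ and the CLT forces unbounded discrepancy. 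One small slip: the linear relation among the $\tilde f_a$ is $\sum_a \tilde f_a = 0$, not $\sum_a q(a)\tilde f_a = 0$ (note $\sum_a f_a = \mathbf{1}$ and $\sum_a q(a)=1$); the spanning conclusion is unaffected.

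The one genuine divergence, and the one genuine gap, is condition (2). The paper does not prove that a nontrivial Jordan block at a modulus-$1$ eigenvalue forces unbounded discrepancy; it cites \cite[Theorem 1]{A}. You instead propose a Jordan-chain renormalization identity with weights $\binom{i-1}{j}\lambda_f^{i-1-j}$ and then a ``quantitative lower bound on the polynomial weights strong enough to force unbounded Birkhoff sums.'' The identity itself is fine, but the lower bound is exactly the hard part and is not automatic: the polynomially weighted terms involve $S_{g_j}(\theta(v_i)_{<k_i})$ for the higher members $g_j$ of the chain, and you must rule out cancellation (e.g.\ the case where the top eigenfunction of the chain is itself a coboundary) before you can conclude growth of order $p^{d-1}$ along some subsequence. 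As written, your plan names this as ``the main technical obstacle'' without resolving it, so either carry out that estimate or, as the paper does, invoke the Adamczewski result directly. The rest of the argument (necessity of (1) and (3) from Theorems~\ref{thm:>1} and \ref{thm:clt1}, sufficiency from the path-space decomposition for $|\lambda_f|<1$ and from Proposition~\ref{prop:coboundary}(ii) for modulus-$1$ coboundaries) matches the paper and is sound.
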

\begin{proof}
  Let $\mathcal{W} \subset \left\{ f: \int f\,dq = 0 \right\}$ be all those functions so that
  \[
    \sup_{N \in \NN} |\functional[f](u_{\leq N})| < \infty.
  \]
  Note that this is a vector space.  Hence $u$ has bounded discrepancy if and only if $\dim \mathcal{W} = |\alphabet|-1,$ as the functions $\left\{ f_a -\int f_a \,dq : a \in \alphabet \right\}$ span the space $\left\{ f: \int f\,dq = 0 \right\}.$ Given a basis of generalized eigenfunctions 
$f_0,f_1,f_2,\dots, f_r,$ with $f_0$ the Perron-Frobenius eigenfunction, we have that $\int f_i\,dq = 0,$ $(1 \leq i \leq r)$ and hence $\left\{ f: \int f\,dq = 0 \right\}$ is also spanned by $f_1,f_2,\dots,f_r.$ Hence the necessity of the first and third conditions follow by Theorems \ref{thm:>1} and \ref{thm:clt1}.  For the second condition, suppose that $\lambda_f$ were an eigenvalue with $|\lambda_f| = 1$ so that in the Jordan form of $M$, there is a nontrivial Jordan block.  Then by \cite[Theorem 1]{A}, there are functions with unbounded discrepancy.  Hence all three conditions are necessary.  
  
  Conversely, suppose that all three conditions are satisfied.  Then we can give a basis of generalized eigenfunctions $f_0,f_1,f_2,\dots, f_r,$ where $f_0$ is the Perron Frobenius eigenfunction.  For those that correspond to eigenvalue of modulus less than $1,$ it is easily checked using the path space decomposition that these have bounded discrepancy.  For those with modulus $1,$ we have that their Birkhoff sums remain bounded as they are eigenfunction coboundaries.
\end{proof}


\subsection{ Typical orbits }

So far, we have focused on proving theorems for a fixed point $u$ of $\theta.$  We now show how Theorem~\ref{thm:clt1} changes when instead of looking at $u,$ we look at other sequences $v$ from the orbit closure of $u$ (see Section \ref{sec:dynamics} for the relevant background).

\begin{Theorem}
\label{thm:clt2}
        Let $(X_{\theta},\mathcal{B},\mu,T)$ be the substitution dynamical system arising from the primitive substitution $\theta.$
Let $f$ be a left eigenfunction of $M$ with eigenvalue $\lambda_f$ with $|\lambda_f| = 1$ which is not a coboundary. 
Let $K_N$ be a random variable with uniform distribution on $\left\{ 1,2,\ldots, N \right\}.$  Then for every $v \in X_{\theta},$ there is a sequence $(a_{v,N})_{N=1}^{\infty}$ so that if $\lambda_f \neq 1$
\[
\frac{\functional(v_{\leq K_N})
 - a_{v,N}
}{\sqrt{\log_\lambda(N)}}
\Rightarrow
Z_f
\]
as $N\to\infty$ and if $\lambda_f = 1,$
\[
\frac{\functional(v_{\leq K_N})
 - a_{v,N}
-\log_\lambda(N)\int_{\alphabet} f(a)\sma(da)
}{\sqrt{\log_\lambda(N)}}
\Rightarrow
Z_f.
\]
Further, for $\mu$-almost every $v,$ there is a constant $C>0$ independent of $v$ so that
\begin{equation}
\label{eq:LIL}
\begin{split}
        \limsup_{N \to \infty} 
        \frac{|a_{v,N} - \log_\lambda(N)\int_{\alphabet} f(a)\sma(da)|}{\sqrt{\log N \log \log \log N}}
        =C 
        &\quad \text{for } \lambda_f =1  \\
        \limsup_{N \to \infty} 
        \frac{|a_{v,N} |}{\sqrt{\log N \log \log \log N}}
        =C
        &\quad \text{for } \lambda_f \ne 1.
\end{split}
\end{equation}
\end{Theorem}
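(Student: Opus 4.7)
The plan is to extend Theorem \ref{thm:clt1} from the fixed point $u$ to arbitrary $v \in X_\theta$ by using the self-similarity of the substitution to separate a $v$-determined drift $a_{v,N}$ from a $K_N$-random part whose distribution converges to $Z_f$ just as in the fixed-point case.

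\emph{Coding of $v$.} By Moss\'e's unique desubstitution theorem, every $v \in X_\theta$ admits a unique representation $v = T^{-s_1}\theta(v^{(1)})$ with $v^{(1)} \in X_\theta$ and $0 \le s_1 < |\theta(v^{(1)}_1)|$; iterating yields a sequence $v^{(0)}=v, v^{(1)}, v^{(2)},\ldots$ in $X_\theta$ together with integers $s_k \in [0, |\theta(v^{(k)}_1)|)$. This is precisely an infinite reversed path $\bigl((v^{(k)}_1, s_k+1)\bigr)_{k \geq 1} \in \pathspace[\infty]_r$, generalizing the constant path $s_k\equiv 0$ that codes the fixed point $u$. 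Under $\mu$, the adic conjugacy (cf.~Section \ref{sec:dynamics}) distributes this coding as $\mpm[\infty]$.

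\emph{Telescoping decomposition.} One desubstitution step, together with the renormalization identity \eqref{eq:renormalization}, produces integers $m_1, j_1$ (depending on $K_N$ and $v$) with
\[
\functional(v_{\leq K_N}) = \lambda_f \functional(v^{(1)}_{< m_1}) - \functional\bigl(\theta(v^{(1)}_1)_{<s_1+1}\bigr) + \functional\bigl(\theta(v^{(1)}_{m_1})_{\leq j_1}\bigr).
\]
Iterating $p \asymp \log_\lambda N$ times collapses the top-level contribution to $O(1)$ (since $|\lambda_f|^p = 1$ and the top-level word has length $O(1)$), leaving
\[
\functional(v_{\leq K_N}) = a_{v,N} + R_{v,N}(K_N) + O(1),
\]
where $a_{v,N} \Def -\sum_{k=1}^{p} \lambda_f^{k-1}\functional\bigl(\theta(v^{(k)}_1)_{<s_k+1}\bigr)$ depends only on the coding of $v$, while $R_{v,N}(K_N) = \sum_{k=1}^{p} \lambda_f^{k-1}\functional\bigl(\theta(v^{(k)}_{m_k})_{\leq j_k}\bigr)$ is the $K_N$-dependent part.

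\emph{CLT for the random part.} The random indices $(m_k,j_k)$ are a deterministic function of $K_N$, and as $K_N$ is uniform on $\{1,\ldots,N\}$ the induced distribution of the path $((v^{(k)}_{m_k}, j_k))_k$ is a version of $\nu_N$ (Proposition \ref{prop:decomposition}) built from the anchor letter $v^{(p)}_{m_p}$ rather than $u_1$. The exponential mixing \eqref{eq:pconvergence} shows this measure is close in total variation to $\champm$ uniformly in $v$. Hence, after centering by $\one[\lambda_f = 1]\log_\lambda(N)\int_\alphabet f\,d\sma$ and scaling by $\sqrt{\log_\lambda N}$, the Markov CLT argument underlying Theorem \ref{thm:clt1} (via Theorem \ref{thm:Markov}) applies and $R_{v,N}(K_N)/\sqrt{\log_\lambda N} \Rightarrow Z_f$, with the limit identical for every $v$.

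\emph{LIL for the drift.} Under $\mu$, the reversed coding $((v^{(k)}_1, s_k+1))_{k\geq 1}$ is distributed as the stationary, primitive Markov chain $\mpm[\infty]$. The quantity $a_{v,N} - \one[\lambda_f = 1]\log_\lambda(N)\int_\alphabet f\,d\sma$ is a partial sum $\sum_{k=1}^p \lambda_f^{k-1}\psi(v^{(k)}_1, s_k+1)$ for the bounded function $\psi(a,j) \Def -\functional(\theta(a)_{<j})$ on $\statespace$, which has mean zero under $\sm$ in the case $\lambda_f=1$ (and requires no centering when $|\lambda_f|=1$, $\lambda_f \ne 1$, as the geometric weights $\lambda_f^{k-1}$ average out). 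The Hartman--Wintner--Philipp--Stout iterated logarithm law for additive functionals of uniformly ergodic Markov chains, applied to this sum of length $p \asymp \log_\lambda N$, produces $\limsup |a_{v,N} - \cdots|/\sqrt{p\log\log p} = C'$ $\mu$-almost surely. Rewriting $p\log\log p \asymp \log N \log\log\log N$ yields \eqref{eq:LIL}.

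\emph{Main obstacle.} The telescoping does not cleanly decouple $v$ from $K_N$: the term $\functional(\theta(v^{(k)}_{m_k})_{\leq j_k})$ mixes the $v$-determined anchor $v^{(k)}_{m_k}$ with the $K_N$-determined position $j_k$. The heart of the argument is therefore to prove that, uniformly in $v$, the joint law of $(v^{(k)}_{m_k}, j_k)_{k=1}^p$ under uniform $K_N$ is asymptotically $\mpm$ in bounded-Lipschitz distance. This is where primitivity of $\ssim[\cdot][\cdot][1]$ together with \eqref{eq:pconvergence} and \eqref{eq:pstarconvergence} do the work, forcing the bottom levels of the random path to forget both the top anchor and the particular coding of $v$. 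Once this uniform-in-$v$ equidistribution is in place, both the CLT and the LIL reduce to the classical limit theorems for the stationary Markov chain with transition matrix $\tm$.
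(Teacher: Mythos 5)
Your proposal takes a genuinely different route from the paper. The paper never analyzes the path-space law induced by uniform $K_N$ over the ``offset tower'' of a general $v$: it uses the prefix--suffix decomposition $v=\cdots p_0.\,c_0s_0\theta(s_1)\theta^2(s_2)\cdots$, sets $a_{v,N}=\functional(v_{[1,N_\ell]})$ for the largest tower boundary $N_\ell\leq N$, splits the count over $n\leq N_\ell$ and $n>N_\ell$, and reduces each half to the uniform-in-$(a,k)$ CLT for prefixes of $\theta^k(a)$ already proved for the fixed point (Proposition~\ref{Prop:service} via Lemma~\ref{lem:uniform}) and to its reversed-substitution analogue (Lemma~\ref{lem:reverse}). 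Your telescoping is sound in structure (with strict prefixes the random indices do trace out a genuine element of $\pathspace$, and your $a_{v,N}$ agrees with the paper's up to $O(1)$ after telescoping $\functional(\theta(c_k))=\functional(p_{k-1})+f(c_{k-1})+\functional(s_{k-1})$), and it has the advantage that the number of levels $p$ is deterministic in $N$, so the paper's Borel--Cantelli step \eqref{eq:ell} is not needed. However, the step you yourself call the heart of the argument is asserted, not proved, and the assertion as phrased is not quite right: the law of $((v^{(k)}_{m_k},j_k))_k$ is \emph{not} ``a version of $\nu_N$ with a different anchor,'' because the admissible leaves form an interval with \emph{two} free boundaries --- at each level there is a partial block on the left (coming from the offsets $s_k$) as well as on the right (coming from $N$). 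Proposition~\ref{prop:decomposition} and Proposition~\ref{prop:dist} therefore do not apply as stated; you need a two-sided extension (a mixture with twice as many boundary components, each still controlled by Proposition~\ref{prop:coupling}), proved uniformly in $v$. This is routine given Section~\ref{sec:comparing}, but it is the actual content of your proof and must be written out.

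There is also a concrete error in the LIL step. The function $\psi(a,j)=-\functional(\theta(a)_{<j})$ does \emph{not} have mean zero under $\sm$ when $\lambda_f=1$: by the definition \eqref{eq:com}, $\int\psi\,d\sm=-\int_{\alphabet}f\,d\sma$, which is nonzero for instance for $\theta_2$ of Example~\ref{ex:eigenvalue1}. Consequently $\Exp\, a_{v,N}\approx -\log_\lambda(N)\int_\alphabet f\,d\sma$, and what your decomposition actually controls is $\limsup|a_{v,N}+\log_\lambda(N)\int_\alphabet f\,d\sma|/\sqrt{\log N\log\log\log N}$; with the centering you wrote the numerator grows like $2\log_\lambda(N)|\int f\,d\sma|$ and the limsup is infinite whenever $\int f\,d\sma\neq 0$. (The sign is forced: averaging the CLT over $K_N$ and then over $\mu$, and using that $f$ integrates to zero against the letter frequencies, gives $\Exp_\mu a_{v,N}\approx-\log_\lambda(N)\int f\,d\sma$.) So the mean-zero claim cannot be used and the centering must be corrected. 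Two smaller points: your $R_{v,N}$ uses closed prefixes $\theta(\cdot)_{\leq j_k}$ rather than the strict prefixes $\theta(\cdot)_{<j_k}$ of the path-space formalism, which shifts the per-level drift by $\sum_b\sigma(b)\rho(b)f(b)$ and should be tracked; and under $\mu$ the coding is distributed as $\champm[\infty]$ rather than $\mpm[\infty]$ (these are equivalent, so the almost-sure LIL is unaffected, but the identification should go through the adic isomorphism of Section~\ref{sec:dynamics}).
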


Hence the fixed point differs from typical orbits in that $a_{v,N} \equiv 0,$ while all orbits of eigenfunctions of modulus $1$ give central limit theorems.

\subsection{ Examples }
\begin{Example}[$\lambda_f = 1$]
  \label{ex:eigenvalue1} 
Let $\alphabet = \{a, b\}$ and $\theta_1, \theta_2$ and $\theta_3$ are given by
\[\theta_1: a \to aab, \quad b \to bba. \]
\[\theta_2: a \to aab, \quad b \to bab. \]
\[\theta_3: a \to aba, \quad b \to bab. \]
 The $\theta_i$-matrix $M$ is given by  
\[ \left( \begin{array}{ccc}
2 & 1  \\
1 & 2  \end{array} \right).\]
Then $f=[1,-1]$ is a (left) eigenvector of $M$ corresponding to the eigenvalue $\lambda_f=1$.
 Using the information obtained in the Example \ref{ex:main}, we see that
 \begin{enumerate}[(i)]
\item $\theta_1$: It has drift $0$ from equation \eqref{eq:maindrift}. Also $f$ is not a coboundary, otherwise
\[1 = f(\theta(a)_1) = h(\theta(a)_1) - h(\theta(a)_2) = h(a) - h(a) =0,\]
 which is impossible. So we have a central limit theorem.
\item $\theta_2$: Similarly we see that $f$ is not a coboundary. In this case we see a logarithmic drift since 
\[ \int_{\alphabet} f(a) \sma(da) = \sum_{(a,j) \in \statespace} {\hat \sigma}(a,j){\hat \rho}(a,j) \functional[f]( \theta(a)_{<j}) = (0 +1 +2 +0 -1 + 1) \frac{1}{6} = \frac{1}{2}.\]
So we have a central limit theorem with non-zero drift.
\item $\theta_3$: A fixed point of this substitution is periodic: 
\[ababab \cdots.\]
So there is no central limit theorem. 
\end{enumerate}
\end{Example}

\begin{Example}[Rational eigenvalues]
\label{ex:rational}
Let $\alphabet = \{a, b\}$ and $\theta$ is given by
\[\theta: a \to abb, \quad b \to baa. \]
Then the $\theta$-matrix $M$ is 
\[ \left( \begin{array}{ccc}
1 & 2  \\
2 & 1  \end{array} \right)\]
and eigenvalues and corresponding (left) eigenvectors of $M$ are
\[ \lambda =3, \sigma = [1,1], \quad \lambda_f =-1, f = [1,-1].\]
By a similar computation to part (i) of Example~\ref{ex:eigenvalue1}, we see that $f$ is not a coboundary, so we have a CLT.
\end{Example}

\begin{Example}[Irrational eigenvalues]
(Adapted from \cite[Proposition 4.1]{BBH}).
\label{ex:salem}
Let $\alphabet = \{a, b, c, d\}$ and for every $n \geq 1$, $\theta_n$ is given by
\[
\theta_n =
\begin{cases}
a \to ad \\
b \to adbbd \\
c \to ad (bc)^{n+1} bd \\
d \to ad (bc)^n bd.
\end{cases}
\]
Then the characteristic polynomial of the {$\theta_n$-matrix} $M_n$ has the roots $\lambda_1, \lambda_2, \lambda_3, \lambda_4$ such that  
\begin{enumerate}[(i)]
\item $\lambda_4 = 1/ \lambda_1$, $\lambda_3 = 1 / {\lambda_2}$ and $\lambda_1 > |\lambda_2| = |\lambda_3| > \lambda_4$.
\item $\lambda_2$ and $\lambda_3$ are of modulus $1$, so they are Salem numbers.
\item If $\lambda_2 = e^{2 \pi i \alpha}$, then $\alpha$ is irrational.
\end{enumerate}
\end{Example}

In matrix theory, various authors have studied how to determine the $d$-tuples of complex numbers which can occur as the eigenvalues of a primitive matrix. Especially, Boyle and Handelman \cite{BH} formulated ``Spectral Conjecture". Later Kim, Ormes and Roush \cite{KOR} obtained the following result.

For $\Lambda = (\lambda_1, \lambda_2, \dots, \lambda_d)$, denote
\[ tr (\Lambda^n) = \sum_{i=1}^d (\lambda_i)^n \quad \text{and} \quad tr_n(\Lambda) = \sum_{k|n} \mu \left(\frac{n}{k} \right) \, tr (\Lambda^k) \]
where $\mu$ is the M\"obius function.

\begin{Theorem}
Let $\Lambda = (\lambda_1, \lambda_2, \dots, \lambda_d)$ be a $d$-tuple of nonzero complex numbers with $|\lambda_1| \leq |\lambda_2| \leq \cdots \leq |\lambda_d|.$ There exists a primitive integer matrix $A$ such that $\det (I- At) =\prod_{i=1}^d (1 - \lambda_i t)$ if and only if
\begin{enumerate}[(i)]
\item the polynomial $\prod_{i=1}^d (1- \lambda_i t)$ has integer coefficients,
\item $\lambda_1 > |\lambda_i|$ for $i=2,3, \dots ,d$
\item $tr_n(\Lambda) \geq 0$ for all $n \geq 1$.
\end{enumerate}
\end{Theorem}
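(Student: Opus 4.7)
The theorem splits into two implications of very different character. The necessity of the three conditions is essentially a book-keeping exercise in Perron--Frobenius theory and symbolic dynamics; the sufficiency is a genuine construction and is where I expect almost all the difficulty to lie.

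For necessity, suppose $A$ is a primitive integer matrix with $\det(I-At)=\prod_i (1-\lambda_i t)$. Condition (i) is automatic, since $\det(I-At)$ is a polynomial in $t$ whose coefficients are polynomials in the entries of $A$ with integer coefficients. Condition (ii) is exactly the conclusion of Perron--Frobenius for primitive non-negative matrices: the spectral radius is a simple real eigenvalue that strictly dominates all others in modulus. For condition (iii), the key identity is $tr(A^n)=\sum_i \lambda_i^n = tr(\Lambda^n)$, and $tr(A^n)$ counts closed walks of length $n$ in the directed multigraph whose adjacency matrix is $A$. Möbius inversion then gives
\[
tr_n(\Lambda)=\sum_{k\mid n}\mu(n/k)\,tr(A^k)=n\,P_n,
\]
where $P_n$ is the number of periodic orbits of exact period $n$ of the shift of finite type defined by $A$. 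Since $P_n$ is a nonnegative integer, (iii) follows.

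For sufficiency I would proceed constructively in two stages. Given a tuple $\Lambda$ satisfying (i)--(iii), first build a nonnegative integer matrix $B$ whose nonzero spectrum is $\Lambda$ (possibly with extra zero eigenvalues). This is the step where (iii) is used decisively: the numbers $P_n:=tr_n(\Lambda)/n$ are nonnegative integers and the target shift of finite type has to realize exactly $P_n$ periodic orbits of exact period $n$. I would attempt to realize these counts by explicitly assembling a labeled directed graph from primitive cycles of various lengths, balancing the cycle multiplicities against the constraints imposed on the characteristic polynomial by (i), and then reading off $B$ as the transition matrix of this graph. In a second stage I would modify $B$, adding auxiliary states or edges, to force primitivity while preserving the nonzero spectrum; condition (ii) guarantees that the Perron eigenvalue remains strictly dominant under such enlargements, so the Perron--Frobenius structure is not destroyed.

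The main obstacle is the first stage of the sufficiency argument. The quantities $tr_n(\Lambda)$ are \emph{global} invariants of the spectrum, but one needs to arrange the \emph{local} structure of a finite directed graph so that its periodic-orbit counts exactly match these numbers while its characteristic polynomial is exactly $\prod_i (1-\lambda_i t)$ (up to a factor of $t^m$). I expect this requires an inductive procedure on the coefficients of the characteristic polynomial, or equivalently on the sequence $(P_n)_{n\geq 1}$, together with careful handling of zero eigenvalues introduced by the construction. All other steps (closure under the necessity checks, the Perron-enforcement argument) should be comparatively routine.
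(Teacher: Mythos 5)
This theorem is not proved in the paper at all: it is the Kim--Ormes--Roush resolution of the Boyle--Handelman Spectral Conjecture, and the paper simply cites it from \cite{KOR} as background for Example~\ref{ex:salem}. So there is no ``paper proof'' to compare against; your task here was in effect to reprove a deep external result.

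Your necessity argument is correct in outline and is the standard one: (i) is immediate because $\det(I-At)$ is an integer polynomial in the entries of $A$; (ii) is Perron--Frobenius for primitive matrices; and (iii) follows from $tr(A^n)=\sum_i\lambda_i^n$ being the number of length-$n$ closed walks in the graph of $A$, so that M\"obius inversion gives $tr_n(\Lambda)=n P_n\ge 0$ with $P_n$ the number of orbits of exact period $n$. One small point you should flag: with the stated ordering $|\lambda_1|\le\cdots\le|\lambda_d|$, the Perron eigenvalue is $\lambda_d$, so condition (ii) as printed (``$\lambda_1>|\lambda_i|$'') is a typo in the paper for ``$\lambda_d>|\lambda_i|$ for $i<d$,'' and you silently corrected it without remarking on it.

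The sufficiency, however, is not a proof but a statement of intent, and this is the entire content of the theorem. You correctly identify that one must realize the prescribed periodic-orbit counts $P_n=tr_n(\Lambda)/n$ by a shift of finite type with the right characteristic polynomial, and that auxiliary zero eigenvalues and a primitivity-forcing step will be needed. But ``explicitly assembling a labeled directed graph from primitive cycles'' and ``an inductive procedure on the coefficients'' is precisely the hard problem, not a solution to it: the obstruction is that cycle counts overdetermine the graph in ways that conflict with integrality and primitivity, and naive constructions fail. The Kim--Ormes--Roush argument is long and technical, built around matrices over $\Z_+[t]$, a formal zeta-function/polynomial-matrix framework, and repeated state-splitting moves, together with separate handling of the subtle ``net trace'' inequalities. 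None of that machinery appears in your sketch. As written, the sufficiency direction has a genuine gap: you have restated the problem rather than solved it, and you should either invoke \cite{KOR} (as the paper does) or supply the actual construction.
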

Note that for a matrix $A$, there is $m \geq 0$ such that $\det (tI - A) = t^m \prod_{i=1}^d (t-\lambda_i)$ if and only if $\det(I - At) = \prod_{i=1}^d (1 - \lambda_i t).$

\begin{Example}[Eigenfunction coboundaries]
\label{ex:coboundary}
Let $\alphabet = \{a, b, c, d\}$ and $\theta$ is given by
\[\theta: a \to ab, \quad b \to ca, \quad c \to cd, \quad d \to ac. \]
Then the $\theta$-matrix $M$ is 
\[ \left( \begin{array}{cccc}
1 & 1 & 0 & 1 \\
1 & 0 & 0 & 0 \\
0 & 1 & 1 & 1 \\
0 & 0 & 1 & 0 
 \end{array} \right)\]
Eigenvalues of $M$ are 
\[\lambda_1 = 2, \lambda_2 = -1, \lambda_3 =1 , \lambda_4 =0\]
and corresponding (left) eigenvectors of $M$ are
\[ \sigma= [1,1, 1, 1], \quad [-1,2,-1,2], \quad [-1,0,1,0] \quad [1,-1,-1,1].\]
 A right Perron-Frobenius eigenvector $\rho$ is given by $\rho^t = [2/6,1/6,2/6,1/6]$ so that $\sum \sigma(a) \rho (a) =1$. 
Then we have that $\sm$ has the form 
\[
(1/6,1/6,1/12,1/12,1/6,1/6,1/12,1/12)
\] 
on the state space $\statespace =\{(a,1), (a,2), (b,1), (b,2), (c,1), (c,2), (d,1), (d,2)\}.$

\begin{enumerate}[(i)]
\item $\lambda_f = 1$ and $f = [-1,0,1,0].$
One has \[ \int_{\alphabet} f(a) \sma(da) = 0\]
and the function $h: \alphabet \to \RR$ such that $h(a) = h(d) =0, h(b) = h(c)=1$ satisfy the condition (iii) and (iv) in Proposition \ref{prop:coboundary}, so $f$ is a coboundary.
\item $\lambda_f = -1, f=[-1,2,-1,2].$
We can obtain \[ \int_{\alphabet} f(a) \sma(da) = -\frac{1}{2}\] and it is not difficult to show that $f$ is not a coboundary by checking condition (iii) in Proposition \ref{prop:coboundary}. 
\end{enumerate}
\end{Example}

\begin{Example}
[Fibonacci substitution]
The Fibonacci substitution $\theta$ is given by 
\[
\theta: a \to ab, \,\, b \to a.
\]
The $\theta$-matrix of the substitution is
\[M= \left( \begin{array}{ccc}
1 & 1  \\
1 & 0  \end{array} \right).\] 
Let $\alpha = \frac{-1 + \sqrt{5}}{2}$. Then $M$ has eigenvalues $\lambda_1 = 1 + \alpha$ and $\lambda_2 = - \alpha$.

Let $u = (u_n)_{n=1}^{\infty} = \lim_{n \rightarrow \infty} \theta^n(a)$. The sequence $u$ also can be obtained by a rotation by ${\alpha}$ (c.f.\,\cite[Proposition 5.4.9]{Fogg}):
\[
u_{n} = a \,\, \text{if } \{n \alpha\} \in [1-\alpha, 1) \quad u_n = b \,\, \text{if } \{n \alpha \} \in [0, 1- \alpha). 
\]
Let $f$ be a function such that $f(a) = 0 - (1-\alpha)$ and $f(b) =1 - (1 - \alpha)$. Then, for $I = [0, 1-\alpha)$, 
\[Z_{\alpha} (n; I) - n(1-\alpha) = \sum_{k=1}^n f(u_k) = S_f (u_{\leq n}).\]
Moreover 
$f = (-1+\alpha, \alpha)$
is an eigenvector corresponding to the eigenvalue $\lambda_2 =  - \alpha$, so $|\lambda_2| <1$.  
For Beck's case, if $I=[0,x)$ for $x \in \Q,$ then we have a central limit theorem for
 $ Z_{\alpha} (n; I) - n(1-\alpha).$
From Theorem \ref{thm:<1}, when $x=1-\alpha,$ the limiting distribution of $ Z_{\alpha} (n; I) - n(1-\alpha)$ is supported on a Cantor set and has a different normalization than when $x \in \Q.$  

\end{Example}

\section{Substitution dynamical systems}
\label{sec:dynamics}
Let $\alphabet$ be a finite set of letters, endowed with the discrete topology, and let $\alphabet^{\ZZ}$ have the product topology, so that $\alphabet^{\ZZ}$ is a compact metric space
and the shift map $T$ given by $(Tu)_n = u_{n+1}$ is a homeomorphism. The pair $(\alphabet^{\ZZ}, T)$ is called the full shift on the alphabet $\alphabet$. If $X$ is a closed $T$-invariant subset of $\sequences$, the pair $(X,T)$ is called a \emph{subshift}.


Given $x \in \alphabet^{\ZZ}$, let $L(x)$ be the set of all finite words appearing in $x$. The language of $\theta$, denoted by $L_{\theta}$, is the set of all finite words occurring in $\theta^n(a)$ for some $n \geq 0$ and $a \in \alphabet$. Let $X_{\theta} = \{ x \in \alphabet^{\ZZ}: L(x) \subset L(\theta) \}$. Then $X_{\theta}$ is closed in $\alphabet^{\ZZ}$ and invariant under the shift. We denote by $T$ the restriction of the shift to $X_{\theta}$. The pair $(X_{\theta}, T)$ is called the \emph{ (two-sided) substitution subshift} associated to $\theta$.
It is known that $(X_{\theta},T)$ is minimal and uniquely ergodic (see \cite{Fogg} or \cite{Q}).

\begin{Remark}
For a given substitution $\theta$, there exist two letters $a, b \in \alphabet$ and $k \in \NN$ such that 
\begin{itemize}
\item $a$ is the last word of $\theta^k(a)$,
\item $b$ is the first word of $\theta^k(b)$,
\item $ab \in L(\theta)$. 
\end{itemize}
Then there exists $v \in \alphabet^{\ZZ}$ such that $v_{-1} = a, v_0 =b$ and $\theta^k(v) =v$. We say that $v$ is a \emph{(two-sided) fixed point} of $\theta$. In this case we have $X_{\theta} = \overline{\{T^n v: n \in \ZZ \}}$.
\end{Remark}

\begin{Remark}
We also can define the \emph{one-sided substitution subshift} associated to $\theta$ by the following. 
By the construction above, there is a fixed point $u$ for $\theta$. Let $\tilde{X}_{\theta}$ be the orbit closure $\overline{\{T^n u : n \in \mathbb{N}_0 \}}$ (where $\mathbb{N}_0 = \mathbb{N} \cup \{0\}$).  The pair $(\tilde{X}_\theta, T)$ is the substitution subshift. This definition can be checked to be independent of the choice of fixed point $u.$ 
The projection $\pi: \alphabet^{\ZZ} \to \alphabet^{\NN}$ maps $X_{\theta}$ onto $\tilde{X}_{\theta}$ and $(X,T, \pi)$ is the natural extension of $(\tilde{X}_{\theta},T)$, that is, for every dynamical system $(Y,S)$ and every factor map $\phi: Y \to \tilde{X}_{\theta}$ there exists a unique factor map $\psi: Y \to X_{\theta}$ with $\pi \circ \psi = \phi$.
\end{Remark}

\subsection{Desubstitution}
Let $\theta$ be a primitive substitution with non-periodic fixed point. The result on recognizability by Moss{\'e} allows one to desubstitute $w$ in $X_{\theta}$ (c.f.\,\cite{Fogg} for details):
\[
w = \cdots w_{-m} \cdots w_{-1} w_0 w_1 \cdots w_n \cdots = \cdots \theta (y_{-1}) \theta (y_0) \theta (y_1) \cdots,
\]
where $w_0$ lies in $\theta(y_0).$

Thus, for a point $w \in X_{\theta}$, there exists a unique sequence $(p_i,c_i,s_i)_{i \in \NN_0} \in (\words \times \alphabet \times \words)^{\NN_0}$ such that $\theta (c_{i+1}) = p_i c_i s_i$ and
\[
w = \cdots \theta^2 (p_2) \theta (p_1) p_0 . c_0 s_0  \theta (s_1) \theta^2 (s_2) \cdots 
\]
which is called a prefix-suffix decomposition of $w$.
If only finitely many $s_i$ are non-empty, then there exist $a \in \alphabet$ and $l, s \in \NN$ such that
\[
x_{[0, \infty)} = c_0 s_0 \theta(s_1) \theta^2 (s_2) \cdots \theta^l(s_l) \lim_{n \rightarrow \infty} \theta^{ns}(a).
\]
Similarly, if only finitely many $p_i$ are non-empty, then there exist $b \in \alphabet$ and $m, t \in \NN$ such that
\[
x_{(-\infty, -1]} =  \lim_{n \rightarrow \infty} \theta^{nt}(b) \theta^m (p_m) \cdots \theta(p_1) p_0.
\]

\subsection{Adic transformations}

As $p \to \infty$, $\champm$ induces the Markov measure $\champm[\infty]$ on the infinite path space $\pathspace[\infty].$ Following Livshits \cite{Livshits}, we define the adic transformation $T_A$ on  $\pathspace[\infty]$ as following: given ${\bf x} = (v_1,k_1)(v_2, k_2) (v_3, k_3) \cdots \in \pathspace[\infty]$,
\begin{itemize}
\item if $k_1 < |\theta(v_2)|$, 
\[ T_A({\bf x}) = (v_1, k_1 +1) (v_2, k_2) (v_3, k_3) \cdots \]
\item otherwise, let $\ell$ be the smallest positive integer such that $k_{\ell} < |\theta(v_{\ell+1})|$, then 
\[ T_A({\bf x}) = (b_1, 1) (b_2,1) \cdots (b_{\ell-1},1) (b_{\ell}, k_{\ell} +1) (v_{\ell+1}, k_{\ell+1}) (v_{\ell+2}, k_{\ell+2}) \cdots \]
where $b_{\ell} = \theta (v_{\ell+1})_{j_{\ell} +1}$ and $b_i = \theta(b_{i+1})_1$ for $1 \leq i \leq \ell -1$.
\end{itemize} 

It is known that the adic transformation is uniquely ergodic and it is measurably isomorphic to the substitution subshift.

Consider the set $C$ of the form 
\begin{equation}
\label{eq:form}
C = [(v_1,k_1) \cdots (v_m, k_m)] \, \backslash \, [(v_1,k_1) \cdots (v_{m-1}, k_{m-1}) (v_m, |\theta(v_{m})|) (v_{m+1}, k_{m +1})] 
\end{equation}
Then for large $N$, $\nu_N(C) = \nu_N (T_A (C))$. Since the sets of the form \eqref{eq:form} generate the Borel $\sigma$-algebra on $\champm[\infty]$, by Proposition \ref{prop:dist} below, $T_A$ is invariant with respect to $\champm[\infty]$. Then $(\pathspace[\infty], \champm[\infty], T_A)$ is measurably isomorphic to the substitution subshift $(X_{\theta}, \mu, T)$.

Also, as $p \to \infty$, $\mpm$ induces the stationary Markov measure $\mpm[\infty]$ on the infinite path space $\pathspace[\infty]$ with the invariant measure $\sm$ given by $\sm(y) = \hat{\sigma}(y) \hat{\rho}(y)$ and the transition probability given by $p(y,z) = \frac{\ssim[y][z][1] \hat{\rho} (z)}{\lambda \hat{\rho}(y) }$, where $\lambda$ is the Perron-Frobenius eigenvalue of $\ssim[\cdot][\cdot][1]$ and $\hat{\sigma}$ and $\hat{\rho}$ are corresponding left and right eigenvectors with $\sum_y \hat{\sigma}(y) \hat{\rho}(y) =1$. It is known \cite{Parry} that $\mpm[\infty]$ is the maximal measure for the topological Markov shift on $\pathspace[\infty]$.

\section{Comparing measures on path space}
\label{sec:comparing}

Recall that for $\bold{x}=x_1 x_2 \cdots x_p  \in \pathspace,$
\[
        \champm\left( \left\{ \bold{x} \right\} \right)
        =
        \cha(\{x_1\})
        \prod_{i=1}^{p-1} \tm(x_{i},x_{i+1}).
\]

We will show both of $\upm$ and $\champm$ are very similar for large $p.$  To compare them, we recall the notion of total variation distance.  For two measures $\mu$ and $\nu$ on a common measure space $(X,\Omega),$ the total variation distance $\dtv$ is given by
\[
        \dtv(\mu,\nu) = \sup_{A \in \Omega} |\mu(A) - \nu(A)|.
\]
The following is now an exercise in \emph{coupling} (see \cite[Chapter 4,5]{LPW} for an introduction): 
\begin{Proposition}
        For any $r < p \in \NN$,
        let $S^r : \pathspace \to \pathspace[p-r]$ be given by
\[
        S^r( x_1x_2\dots x_p )
        =
        x_1x_2\dots x_{p-r}.
\]
        For every $c_1 > 0$ there is a constant $c_2 >0$ such that for all $p \in \NN,$ all $y \in \statespace$ and all integers $r > c_2 \log p,$
        \[
                \dtv(\champm \circ S^{r}, \upm \circ S^{r}) < p^{-c_1}.
        \]
        \label{prop:coupling}
\end{Proposition}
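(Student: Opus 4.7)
The plan is to view both $\champm$ and $\upm$ as inhomogeneous Markov measures on $\pathspace$, via \eqref{eq:champm} and \eqref{eq:upm} respectively, and to bound the total variation distance of their first $p-r$ marginals by a standard path-coupling estimate. Under $\champm \circ S^r$, the path $x_1 x_2 \dots x_{p-r}$ is a homogeneous Markov chain with initial distribution $\cha$ and transition $\tm$. Under $\upm \circ S^r$, summing \eqref{eq:upm} over $x_{p-r+1},\dots,x_p$ gives an inhomogeneous Markov chain with initial distribution $x_1 \mapsto \ssim[x_1][y][p-1]/\ssim[*][y][p-1]$ and with the $i$-th transition kernel equal to $h^y_{p-i+1}(\cdot,\cdot)$ for $1 \le i \le p-r-1$. (Here one uses that each $h^y_k$ is stochastic, so marginalizing the tail simply erases the last $r$ transitions.)

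I would then invoke the standard inhomogeneous Markov-chain coupling bound: if two chains on $\statespace^n$ have initial distributions $\mu,\nu$ and transition kernels $P_i, Q_i$, then
\[
\dtv\bigl(\mathcal{L}(X_1,\dots,X_n),\mathcal{L}(Y_1,\dots,Y_n)\bigr)
\;\le\;\dtv(\mu,\nu) \;+\; \sum_{i=1}^{n-1} \sup_{x\in\statespace} \dtv\bigl(P_i(x,\cdot),Q_i(x,\cdot)\bigr).
\]
This is proved by the greedy coupling: couple initial states optimally, and at each step, if the chains currently agree, couple the next transitions optimally (otherwise let them evolve independently). The first disagreement time is then bounded by the union bound giving the displayed inequality.

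Applied with $n = p-r$, $P_i = \tm$, $Q_i = h^y_{p-i+1}$, the bound \eqref{eq:pconvergence} yields $\sup_x \dtv(\tm(x,\cdot), h^y_{p-i+1}(x,\cdot)) \le |\statespace|\, e^{-c(p-i+1)}$, and hence
\[
\sum_{i=1}^{p-r-1} \sup_x \dtv\bigl(\tm(x,\cdot),h^y_{p-i+1}(x,\cdot)\bigr)
\;\le\; |\statespace| \sum_{k=r+2}^{p} e^{-ck}
\;\le\; \frac{|\statespace|}{1-e^{-c}}\, e^{-cr}.
\]
The initial-distribution term is handled by \eqref{eq:ssimlimit}: the exponential Perron--Frobenius convergence rate (inherited from the spectral gap of the primitive matrix $\ssim[\cdot][\cdot][1]$) gives $\dtv(\cha,\ssim[\cdot][y][p-1]/\ssim[*][y][p-1]) = O(e^{-cp})$ with the same (or a smaller) exponent $c$. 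Combining these, $\dtv(\champm \circ S^r, \upm \circ S^r) \le C e^{-cr}$. Given $c_1>0$, choosing $c_2 > c_1/c$ makes this less than $p^{-c_1}$ whenever $r > c_2 \log p$.

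The only real work is the bookkeeping in the coupling inequality and confirming that the Perron--Frobenius convergence rates used in \eqref{eq:pconvergence} and in the initial-distribution estimate can be taken with a common exponent, which is immediate from the existence of a spectral gap for the primitive matrix $\ssim[\cdot][\cdot][1]$. Everything else is a routine summation of a geometric series and a choice of constants.
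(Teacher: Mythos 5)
Your proposal is correct and is essentially the paper's own argument: the paper constructs exactly this greedy coupling (optimal couplings of $h^y_k(x,\cdot)$ with $\tm(x,\cdot)$ at each step, an optimal coupling of the initial distributions, and a union bound over the first disagreement time), then applies \eqref{eq:pconvergence} and the Perron--Frobenius estimate for the initial laws to sum the geometric series and choose $c_2$. No substantive differences.
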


\begin{proof}
We begin by defining coupling of two probability measures on a common probability space. Suppose that $\nu_1$ and $\nu_2$ are probability measures on a probability space $(X, \mathcal{B})$. A coupling of $\nu_1$ and $\nu_2$ is a probability measure $\gamma$ on the product space $(X \times X, \mathcal{B} \otimes \mathcal{B})$ such that marginals are $\nu_1$ and $\nu_2$. 
The total variation norm of $\nu_1$ and $\nu_2$ can be expressed in terms of couplings of $\nu_1$ and $\nu_2$. Specifically by \cite[Proposition 4.7]{LPW}, 
\[
\dtv(\nu_1, \nu_2) = \inf_{\gamma: \text{ couplings}} \gamma \{(x,z) \in X \times X: x \ne z \}.
\]
To bound for total variation norm of  $\champm \circ S^{r}$ and $\upm \circ S^{r}$, it suffices to construct a coupling $\gamma$ such that $\gamma \{(x,y) \in \pathspace \times \pathspace: x \ne y \} < p^{-c_1}$.
%
We recall for convenience \eqref{eq:upm}, which states that for $\bold{x}=x_1 x_2 \cdots x_p,$ 
\begin{align*}
  &\upm (\{\bold{x}\}) = \nonumber\\
        &\frac{\ssim[x_1][y][p-1]}{\ssim[*][y][p-1]}
	h^y_p(x_1,x_2)
	h^y_{p-1}(x_2,x_3)
	\dots
	h^y_{3}(x_{p-2},x_{p-1})
	\frac{\ssim[x_{p-1}][x_p][1]}{\ssim[x_{p-1}][y][p-1]}
	\one[x_p = y].
\end{align*}
For a fixed $x \in \statespace$, $h^y_k(x, \cdot)$ and $\tm(x, \cdot)$ are probability measures. 
For any pair $(x,z) \in \statespace^2$ and any $2 < k \leq p,$ we define a coupling $G_k ((x,z), (\cdot, \cdot))$ of $h^y_k(x, \cdot)$ and $\tm(z, \cdot)$ such that $G_k ((x,z), (\cdot, \cdot))$ attains $\dtv(h^y_k(x, \cdot), \tm (z, \cdot)).$  This coupling can in fact be given explicitly, see \cite[Remark 4.8]{LPW}.
We also let $G_2 ((x,z), (\cdot, \cdot))$ be any coupling of $\delta_y$ and $\tm(z,\cdot),$ and we let $H_p$ be the coupling that attains the total variation distance of the measures
\[
	\frac{\ssim[\cdot][y][p-1]}{\ssim[*][y][p-1]} \quad \text{and} \quad
	\cha(\{\cdot\}).
\]
Hence by Perron-Frobenius theory, we have
\[
	H_p(x \ne z) \ll e^{-cp}.
\]

Now we define the coupling $\gamma$ on $\pathspace \times \pathspace$:
\[
	\gamma(\bold{x}, \bold{z})  = H_p( (x_1,z_1)) \prod_{i=1}^{p-1} G_{p-i+1} ((x_{i}, z_{i}), ( x_{i+1}, z_{i+1})),
\]  
so that $\gamma$ is the law of a Markov chain on $\pathspace \times \pathspace.$

For any measures $\mu_1, \mu_2$ on a countable space $X,$ we have by \cite[Proposition 4.2]{LPW} that $\dtv(\mu_1, \mu_2) = \frac{1}{2} \sum_{x \in X} |\mu_1(x) - \mu_2(x)|.$
Hence for all $k \leq p,$ by \eqref{eq:pconvergence}, 
\[
 \dtv(\tm(a, \cdot), h_{k}^y (a, \cdot)) \leq |\statespace| e^{-ck}.
\]
Then 
\begin{align*}
&\gamma \left( \exists j \leq p - r: x_{j} \ne z_{j} \right) \\
&= \sum_{j=1}^{p-r} \gamma(x_{j} \ne z_{j}, x_{k} = z_{k} \,\, (\forall k < j)) \\
&\leq 
\gamma( x_1 \neq z_1 )
+
\sum_{j=2}^{p-r} \gamma(x_{j} \ne z_{j} | x_{j-1} = z_{j-1})  \\
&
\leq H_p( x_1 \neq z_1 )
+
\sum_{j=2}^{p-r} \sup_{x \in \statespace} \dtv(\tm(x, \cdot), h_{p -j +2}(x, \cdot)) \\
& \ll e^{-cr}.
\end{align*}
Taking $r = C\log p$ for sufficiently large $C$ completes the proof.
\end{proof}

We can also compare $\mpm$ and $\champm$ in a similar way, which is a standard result on primitive Markov chains.
\begin{Proposition}
        For any $r < p \in \NN$,
        let $L^r : \pathspace \to \pathspace[p-r]$ be given by
\[
        L^r( x_1x_2\dots x_p )
        =
	x_{r+1}x_{r+2}\dots x_{p}.
\]
        For every $c_1 > 0$ there is a constant $c_2 >0$ such that for all $p \in \NN$ and all integers $r > c_2 \log p,$
        \[
                \dtv(\champm \circ L^{r}, \mpm \circ L^{r}) < p^{-c_1}.
        \]
        \label{prop:mpmcoupling}
\end{Proposition}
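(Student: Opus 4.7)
The plan is to use the same coupling strategy as in Proposition~\ref{prop:coupling}, adapted to the present setting where $\champm$ and $\mpm$ correspond to Markov chains with the \emph{same} transition matrix $\tm$ and differ only in their initial distributions ($\cha$ versus the stationary $\sm$). Since $L^r$ strips off the first $r$ coordinates, we are comparing the laws of the two chains from time $r+1$ onward; because $\tm$ is primitive on the finite set $\statespace$, the two chains merge exponentially fast in $r$, and this gives the desired bound.

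The first step is to record exponential ergodicity of $\tm$. Because $\tm$ is a primitive stochastic matrix on the finite state space $\statespace$, the standard Perron-Frobenius / Doeblin theory for primitive Markov chains produces constants $c, K > 0$ with
\[
\sup_{x, z \in \statespace} \dtv\bigl(\tm^k(x,\cdot),\tm^k(z,\cdot)\bigr) \leq K e^{-ck}, \qquad k \geq 1.
\]
In particular, $\dtv(\mu \tm^k,\sm) \leq K e^{-ck}$ for every initial distribution $\mu$ on $\statespace$, so the marginal law of $X_{r+1}$ under $\champm$ differs from that under $\mpm$ by at most $K e^{-cr}$ in total variation.

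The second step is a synchronous coupling of the two chains $(X_i)_{i=1}^p \sim \champm$ and $(Y_i)_{i=1}^p \sim \mpm$. Couple $(X_1, Y_1)$ optimally so that $\Pr(X_1 \neq Y_1) = \dtv(\cha,\sm)$; then at every subsequent step, if $X_i = Y_i$ set $Y_{i+1} = X_{i+1} \sim \tm(X_i,\cdot)$, and otherwise draw $(X_{i+1}, Y_{i+1})$ from the optimal coupling of $\tm(X_i,\cdot)$ and $\tm(Y_i,\cdot)$. Since once the two chains meet they remain equal, iterating the coupling inequality yields
\[
\gamma\bigl(L^r(X) \neq L^r(Y)\bigr) \leq \gamma(X_{r+1} \neq Y_{r+1}) \leq \sup_{x,z \in \statespace}\dtv\bigl(\tm^r(x,\cdot),\tm^r(z,\cdot)\bigr) \leq K e^{-cr}.
\]
Combining this with the coupling characterization of total variation gives $\dtv(\champm \circ L^r, \mpm \circ L^r) \leq K e^{-cr}$, which is smaller than $p^{-c_1}$ as soon as $r > c_2 \log p$ for $c_2 = c_2(c_1, c, K)$ sufficiently large. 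The main (mild) obstacle is simply justifying the uniform exponential contraction of $\tm$; everything after that is bookkeeping on a coupling verbatim analogous to the one in Proposition~\ref{prop:coupling}.
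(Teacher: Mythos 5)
Your overall strategy is the right one, and it is the same standard argument the paper itself invokes (the paper gives no proof, citing \cite[(5.2)]{LPW}, which is exactly the statement that a primitive finite chain forgets its initial distribution exponentially fast). Your first step is correct and is really all that is needed: since $\champm\circ L^r$ and $\mpm\circ L^r$ are the laws of Markov chains with the \emph{same} kernel $\tm$ started from $\cha\,\tm^r$ and from $\sm$ respectively, one can couple them to agree identically as soon as their first coordinates agree, so $\dtv(\champm\circ L^r,\mpm\circ L^r)=\dtv(\cha\,\tm^r,\sm)\leq Ke^{-cr}$, and choosing $c_2$ large finishes the proof.

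The one step you should repair is the second one. Iterating the \emph{one-step} optimal couplings gives $\gamma(X_{i+1}\neq Y_{i+1})\leq \gamma(X_i\neq Y_i)\cdot\sup_{x\neq z}\dtv(\tm(x,\cdot),\tm(z,\cdot))$, but for a primitive chain this one-step Dobrushin coefficient can equal $1$ (two states may have transition distributions with disjoint supports), so this recursion by itself yields no decay, and in particular the inequality $\gamma(X_{r+1}\neq Y_{r+1})\leq\sup_{x,z}\dtv(\tm^r(x,\cdot),\tm^r(z,\cdot))$ does not follow from the coupling you describe. Either couple in blocks of length $k_0$, where $k_0$ is chosen so that $\tm^{k_0}$ has all entries positive (a Doeblin minorization then gives a fixed meeting probability per block), or, more simply, discard the stepwise construction entirely and couple optimally at time $r+1$ using the bound from your first step. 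With that modification the proof is complete.
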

\noindent For a proof, see \cite[(5.2)]{LPW}.

As a consequence of Proposition~\ref{prop:coupling} and Proposition~\ref{prop:decomposition}, we have that 
\begin{Proposition}
\label{prop:dist}
For every $c_1 > 0,$ there is a constant $c_2 >0$ so that for all $p \in \NN,$ $a \in \alphabet,$ 
$N \in \NN$ with $|\theta^{p-1}(a)| \leq N < |\theta^p(a)|,$
and all integers $r > c_2 \log p,$
        \[
                \dtv(\champm \circ S^{r}, \nu_N \circ S^{r}) \ll p^{-c_1}.
        \]
        \label{prop:Nmpm}
\end{Proposition}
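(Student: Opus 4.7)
The plan is to combine the mixture decomposition of Proposition~\ref{prop:decomposition} with the Markov chain coupling of Proposition~\ref{prop:coupling}, controlling the residual small-level terms via the Perron-Frobenius growth in~\eqref{eq:ssimlimit}.

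First I would rewrite $\nu_N$ as a convex combination $\sum_{\ell, j} w_{\ell, j}\, \mu_{\ell, j}$ plus a Dirac term of mass $1/N$ at $\Psi_{a,p}(N)$, where each $\mu_{\ell, j}$ is the probability measure on $\pathspace$ whose first $\ell$ coordinates are distributed as $\upm[(v_\ell, j)][\ell]$ and whose last $p - \ell$ coordinates are deterministically equal to the tail $(v_{\ell+1}, k_{\ell+1}) \cdots (v_p, k_p)$ of $\Psi_{a,p}(N)$. The bound $N \geq |\theta^{p-1}(a)|$, together with the Perron-Frobenius asymptotic $\ssim[*][y][\ell] \asymp \lambda^\ell$ coming from~\eqref{eq:ssimlimit}, then yields $w_{\ell, j} \ll \lambda^{\ell - p}$ uniformly in $(\ell, j)$.

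Next I would fix a cutoff $\ell_0 = p - C \log_\lambda p$ for a constant $C$ to be chosen, and split the triangle inequality across this cutoff. The levels $\ell \leq \ell_0$ contribute total weight $\sum_{\ell \leq \ell_0,\, j} w_{\ell, j} \ll \lambda^{\ell_0 - p} = p^{-C}$, and taking $C = c_1$ disposes of these low-level terms together with the $1/N \ll \lambda^{-p}$ Dirac contribution. For $\ell > \ell_0$, the projection $\mu_{\ell, j} \circ S^r$ coincides exactly with $\upm[(v_\ell, j)][\ell]$ pushed forward by the map $\pathspace[\ell] \to \pathspace[p - r]$ that discards the last $\ell - (p - r)$ coordinates, so Proposition~\ref{prop:coupling} applied at level $\ell$ with truncation $\tilde r = \ell - (p - r)$ yields $\dtv(\mu_{\ell, j} \circ S^r,\ \champm[\ell] \circ S^{\tilde r}) \ll \ell^{-c_1} \leq p^{-c_1}$. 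Since $\champm$ and $\champm[\ell]$ are initial segments of the same inhomogeneous Markov chain with initial distribution $\cha$ and transitions $\tm$, one has $\champm[\ell] \circ S^{\tilde r} = \champm \circ S^r = \champm[p - r]$, so summing these TV bounds against the convex weights $w_{\ell, j}$ gives the required inequality.

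The main obstacle is the simultaneous calibration of the cutoff $\ell_0$, the truncation $r$, and the constants appearing in Proposition~\ref{prop:coupling} and the Perron-Frobenius asymptotic. The cutoff must be low enough for $\lambda^{\ell_0 - p}$ to be of order $p^{-c_1}$, yet high enough that every retained level $\ell > \ell_0$ still satisfies the logarithmic threshold $\tilde r = \ell - (p - r) > c_2' \log \ell$ required by Proposition~\ref{prop:coupling}. This balancing forces the constant $c_2$ in the statement to be taken as an affine combination of $c_1/\log \lambda$ and the coupling constant $c_2'$.
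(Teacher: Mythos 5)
Your argument is correct and follows essentially the same route as the paper's: decompose $\nu_N$ via Proposition~\ref{prop:decomposition} (including the extra Dirac mass at $\Psi_{a,p}(N)$, which the paper's formula leaves implicit), discard the geometrically small low levels, and apply Proposition~\ref{prop:coupling} level-by-level to the retained high levels after observing that $\champm[\ell]\circ S^{\tilde r}$ and $\champm\circ S^r$ both equal $\champm[p-r]$; the paper's cutoff is $p-[r/2]$ where yours is $p-C\log_\lambda p$, but both force $c_2$ to be a comparable affine combination of the same constants. One small sign slip: for $\ell\le p$ one has $\ell^{-c_1}\ge p^{-c_1}$ rather than the inequality you wrote, but your conclusion $\ell^{-c_1}\ll p^{-c_1}$ still holds since the retained levels satisfy $\ell> p-C\log_\lambda p\ge p/2$ for large $p$.
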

\begin{proof}

 By Proposition~\ref{prop:decomposition},
 \[
        \nu_N \circ S^r
                =
                \sum_{\ell=1}^p \sum_{1 \leq j < k_\ell} 
                \frac{\ssim[*][(v_\ell,j)][\ell]}{N} 
                \upm[(v_\ell,j)][\ell] \circ S^{(r-(p-l))_+}.
 \]
We also have that
 \[
        \champm \circ S^r
                =
                \sum_{\ell=1}^p \sum_{1 \leq j < k_\ell} 
                \frac{\ssim[*][(v_\ell,j)][\ell]}{N} 
                \champm \circ S^{r}.               
 \]
 Note that by stationarity,
 $\champm \circ S^{r} =\champm[p-r]$.

 For any event $A$ and large $r$ as in the Proposition~\ref{prop:coupling}, let $r_0 = [r/2]$
\begin{align*}
        &|\nu_N \circ S^r (A) - \champm \circ S^r (A)|\\
        &\leq
        \sum_{\ell=1}^{p-r_0} \sum_{1 \leq j < k_\ell} 
        \frac{\ssim[*][(v_\ell,j)][\ell]}{N} \\
        &+ \sum_{\ell=p-r_0}^p \sum_{1 \leq j < k_\ell} 
        \frac{\ssim[*][(v_\ell,j)][\ell]}{N}
        |\upm[(v_\ell,j)][\ell] \circ S^{r-(p-l)}(A) - \champm[l] \circ S^{r-(p-l)}(A)|\\
        &\leq \frac{|\theta^{p-r_0}(a)|}{N} + (p-r_0)^{-c_1},
\end{align*}
where we have applied Proposition~\ref{prop:coupling} to the third line.
Then, for some $\alpha>0$,
\[ 
\frac{|\theta^{p-r_0}(a)|}{N} 
\ll \frac{1}{|\lambda^{r_0-1}|} \ll p^{-\alpha c_2}.
\]
Picking $c_2$ sufficiently large, the result follows. 
\end{proof}

\section{Proofs for $|\lambda_f|=1$}

Theorem~\ref{thm:clt1} will follow immediately from Proposition~\ref{Prop:service} combined with Proposition~\ref{prop:coboundary}.  However, Proposition~\ref{prop:coboundary} relies on Proposition~\ref{Prop:service}, so we present Proposition~\ref{Prop:service} first.

\begin{Proposition}
        \label{Prop:service}
Suppose that $f$ is a left eigenfunction of $M$ with eigenvalue $\lambda_f$ with $|\lambda_f| = 1$ for which there is no function $h : \alphabet \to \C$ so that
for all $c \in \alphabet$
        \[
        \functional[f](\theta(c)_{<j})
        =
        \int_{\alphabet} f(b)\sma(db)
        +h(c) - \lambda_f^{-1} h(\theta(c)_j).
        \]
Let $K_N$ be a random variable with uniform distribution on $\left\{ 1,2,\ldots, N \right\},$ and let $a \in \alphabet$ be fixed.  Then if $\lambda_f \neq 1,$ as $N\to\infty$
\[
	\sup_{\substack{ \ell \in \NN \\
	|\theta^\ell(a)| \geq N}}
	\dbl\left( 
\frac{\functional(\theta^\ell(a)_{\leq K_N})
}{\sqrt{\log_\lambda(N)}}
,Z_f\right)
\to 0.
\]
If $\lambda_f = 1,$ as $N\to\infty$
\[
	\sup_{\substack{ \ell \in \NN \\
	|\theta^\ell(a)| \geq N}}
	\dbl\left( 
\frac{\functional(\theta^\ell(a)_{\leq K_N})
-
\log_\lambda(N)
\int_{\alphabet} f(b)\sma(db)
}{\sqrt{\log_\lambda(N)}}
,Z_f\right)
\to 0.
\]
\end{Proposition}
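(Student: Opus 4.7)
The plan is to code the position $K_N$ by the path space $\Psi_{a,\ell}$ and use the eigenfunction renormalization $\functional(\theta(w))=\lambda_f\functional(w)$ to rewrite the Birkhoff sum as a twisted ergodic sum over a primitive finite-state Markov chain, and then apply a martingale CLT. Let $p$ be the unique integer with $|\theta^{p-1}(a)|<N\leq|\theta^p(a)|$, and write $\Psi_{a,\ell}(K_N{+}1)=(v_1,k_1)\cdots(v_\ell,k_\ell)$. Applying the eigenfunction identity level by level gives
\begin{equation*}
\functional(\theta^\ell(a)_{\leq K_N})=\sum_{i=1}^\ell \lambda_f^{i-1}\functional(\theta(v_i)_{<k_i})+O(1),
\end{equation*}
the additive $O(1)$ coming from the single letter at position $K_N$. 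Since $K_N\leq N\asymp\lambda^p$ and the substitution grows at rate $\lambda$, all but $O(1)$ levels $i>p$ have $k_i=1$ and contribute zero; the remaining ``boundary'' terms are each bounded by $\|f\|_\infty\max_c|\theta(c)|$, giving an $O(1)$ contribution uniform in $\ell$. Only the $p$ active lower levels carry the CLT weight.

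Next I replace the law of $(v_1,k_1),\dots,(v_{p-r},k_{p-r})$ under $\nu_N$ by the stationary Markov measure $\mpm$. Decomposing $\theta^\ell(a)=\theta^p(w_1)\theta^p(w_2)\cdots$ where $w_j$ is the $j$-th letter of $\theta^{\ell-p}(a)$, $K_N$ lies in one of finitely many initial super-blocks (Perron--Frobenius), and conditionally on the super-block the distribution reduces to $\nu_{|\theta^p(w_j)|}$ coded by $\Psi_{w_j,p}$. Proposition~\ref{prop:Nmpm} bounds each such piece within $p^{-c_1}$ in total variation of $\champm\circ S^r$, and Proposition~\ref{prop:mpmcoupling} further reduces $\champm\circ S^r$ to $\mpm\circ S^r$ at the same cost, for $r=C\log p$. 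The super-block mixture inherits the bound, uniformly in $\ell$. The truncated tail $\sum_{p-r<i\leq p}\lambda_f^{i-1}\functional(\theta(v_i)_{<k_i})$ is deterministically $O(r)=O(\log p)$ (because $|\lambda_f|=1$), hence $o(\sqrt{\log_\lambda N})$ and negligible after normalization.

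The task then reduces to a CLT for $\sum_{i=1}^{p-r}\lambda_f^{i-1}\tilde g(X_i)$ with $\tilde g(v,k):=\functional(\theta(v)_{<k})-\int f\,d\sma$ mean-zero under $\sm$, and $(X_i)$ the stationary chain with transition $\tm$. By primitivity of $\tm$, the transfer operator $P$ has a spectral gap on the mean-zero subspace of $L^2(\sm)$, so $I-\lambda_f P$ is invertible there; setting $h=(I-\lambda_f P)^{-1}\tilde g$ gives $\tilde g=h-\lambda_f Ph$, and summation by parts yields
\begin{equation*}
\sum_{i=1}^{p-r}\lambda_f^{i-1}\tilde g(X_i)=h(X_1)-\lambda_f^{p-r}h(X_{p-r+1})+\sum_{i=1}^{p-r}\lambda_f^i\bigl(h(X_{i+1})-Ph(X_i)\bigr),
\end{equation*}
whose last sum is a square-integrable complex martingale with stationary increments of variance $\sigma^2:=\Exp_\sm|h(X_2)-Ph(X_1)|^2$. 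A standard martingale CLT then yields convergence to the complex Gaussian $Z_f$; the twist by $\lambda_f\notin\R$ symmetrizes the covariance so that the real and imaginary parts are uncorrelated with equal variance, matching the definition of $Z_f$. For $\lambda_f=1$ the stationary drift $p\int f\,d\sma$ is absorbed by the prescribed centering $\log_\lambda(N)\int f\,d\sma$ up to $O(1)$, while for $\lambda_f\neq 1$ the bounded geometric factor $\sum\lambda_f^{i-1}$ makes the expected value $O(1)$ and no centering is needed.

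The principal obstacle is non-degeneracy: $\sigma^2>0$ under the hypothesis. The martingale CLT gives $\sigma^2=0$ iff $h(X_2)=Ph(X_1)$ a.s., i.e.\ $h(X_2)$ is $X_1$-measurable. Given $X_1=(v_1,k_1)$, the accessible successors $X_2=(v_2,k_2)$ are precisely those with $\theta(v_2)_{k_2}=v_1$ and $k_2$ otherwise free, so $h(v_2,k_2)$ must depend only on $\theta(v_2)_{k_2}$, forcing $h(v,k)=H(\theta(v)_k)$ for some $H\colon\alphabet\to\C$. Substituting into $\tilde g=h-\lambda_f Ph$ and letting $h_{\mathrm{p}}(v):=-\lambda_f H(v)$ recovers exactly $\functional(\theta(v)_{<k})-\int f\,d\sma=h_{\mathrm{p}}(v)-\lambda_f^{-1}h_{\mathrm{p}}(\theta(v)_k)$, the identity explicitly excluded by the hypothesis. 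Hence $\sigma^2>0$ and the CLT is non-degenerate. Uniformity in $\ell$ is automatic, since every estimate above (the Perron--Frobenius boundary bound, the total variation estimates of Propositions~\ref{prop:Nmpm} and~\ref{prop:mpmcoupling}, and the Markov CLT rate) depends only on $p$ and not on $\ell$.
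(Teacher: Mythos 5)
Your proposal is correct and its overall architecture coincides with the paper's: code $K_N$ by $\Psi_{a,p}$, use the renormalization identity to write $\functional(\theta^\ell(a)_{\leq K_N})$ as a twisted sum of $\check f(v_i,k_i)=\functional(\theta(v_i)_{<k_i})$ over the path, transfer $\nu_N$ to $\mpm$ via Propositions~\ref{prop:Nmpm} and~\ref{prop:mpmcoupling} after discarding $r=O(\log p)$ boundary levels (which are $o(\sqrt{\log_\lambda N})$ since $|\lambda_f|=1$), and invoke the martingale CLT. Two remarks. First, your ``super-block'' reduction of the levels $i>p$ is handling a real bookkeeping point that the paper largely elides (it simply asserts $k_m=1$ for $m>p_N$ and writes $Z(\Psi_{a,p_N}(K))$); your version is serviceable, though as stated the last, partial super-block does not literally carry the law $\nu_{|\theta^p(w_j)|}$ --- one should either note that it carries $\nu_{N'}$ for the truncated length $N'$ and split on whether $N'/N$ is small, or observe directly that only $O(1)$ levels above $p$ can have $k_i>1$, so their contribution is uniformly bounded. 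This does not affect correctness. Second, and more substantively, your non-degeneracy argument is genuinely different from the paper's and is cleaner: you read off from $h(X_2)=Ph(X_1)$ a.s.\ (using full support of $\sm$ and the fact that the successor set of $(v_1,k_1)$ under $\tm$ is exactly $\{(v_2,k_2):\theta(v_2)_{k_2}=v_1\}$, independent of $k_1$) that $h(v,k)=H(\theta(v)_k)$, and substitute back into $\tilde g=h-\lambda_f Ph$ to land exactly on the excluded identity with $h_{\mathrm p}=-\lambda_f H$. The paper instead passes from degeneracy to uniform boundedness of the partial sums, reverses the chain to obtain an $\hat h$ with $PP^*\hat h=\hat h$, and identifies the eigenspace of $\tm\tm^*$ at eigenvalue $1$ via its block-stochastic structure before computing $P^*\hat h(a,j)=h(\theta(a)_j)$. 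Both routes reach the same contradiction; yours avoids the reversed-chain detour at the cost of arguing directly on the transition graph, while the paper's linear-algebraic route also yields the reversed-chain statement it reuses elsewhere.
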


\begin{proof} 
Given $N$, one can find $p_N$ such that $|\theta^{p_N-1}(a)| < N \leq |\theta^{p_N}(a)|$.
Now we define a function $\check{f}$ on $\statespace$ by
$\check{f}(v_i,k_i) = \functional(\theta(v_i)_{< k_i}).$
Then we have for any $1 \leq K \leq N,$ that
\[
        \lambda_f \functional (\theta^\ell(a)_{<K}) = \sum_{i=1}^{\ell} \lambda_f^{i} \check{f}((v_i,k_i)),
\]
where 
\(
\Psi_{a,\ell}(K) = (v_1,k_1)(v_2,k_2) \dots (v_{\ell},k_{\ell})
.\)
For $m > p_N,$ we have that $k_m = 1.$  Hence $\check{f}(v_m,k_m) = 0$ for all these $m,$ and we have
\[
        \lambda_f \functional (\theta^\ell(a)_{<K}) = \sum_{i=1}^{p_N} \lambda_f^{i} \check{f}((v_i,k_i)),
\]

We will show a central limit theorem for 
\(
\frac{\lambda_f S_f(\theta^\ell(a)_{<K_N}) - \sum_{i=1}^{p_N} \lambda_f^i\int \check{f} d \sm}{\sqrt{\log_\lambda N}}.
\)
The desired central limit theorems follow immediately from this.

Let $Z = \frac{1}{\sqrt{\log_{\lambda}N}} \sum_{i=1}^{p_N} \lambda_f^i( \check{f}((v_i,k_i)) - \int \check{f} d \sm) $, which is a function on $\pathspace$.  Then we have that
\[
  \frac{\lambda_f \functional (\theta^\ell(a)_{<K})
  -
   \sum_{i=1}^{p_N} \lambda_f^i\int \check{f} d \sm 
  }{\sqrt{\log_\lambda(N)}}
  = Z(\Psi_{a,p_N}(K)).
\]
By definition of $\nu_N,$ we therefore have that for any bounded Lipschitz function $\phi,$
\begin{align*}
  \frac{1}{N}\sum_{K=1}^N\phi\bigl( \frac{\lambda_f \functional (\theta^\ell(a)_{<K})
  -\sum_{i=1}^{p_N} \lambda_f^i\int \check{f} d \sm 
  }{\sqrt{\log_\lambda(N)}} \bigr)
  &=\Exp \phi\bigl( \frac{\lambda_f \functional (\theta^\ell(a)_{<K_N})
  -\sum_{i=1}^{p_N} \lambda_f^i\int \check{f} d \sm 
  }{\sqrt{\log_\lambda(N)}} \bigr) \\
  &=\Exp \phi\bigl( Z(\Psi_{a,p_N}(K_N)) \bigr) \\
  &=\int_{\pathspace} \phi(Z(\bx)) \nu_N(d\bx).
\end{align*}
We will show that for any bounded Lipschitz function $\phi$ 
\[
        \int_{\pathspace} \phi(Z(\bx))\nu_N(d\bx) \to \int_{\C} \phi(x) \Phi(dx),  
\]
where $\Phi$ is the probability measure given by $\Phi(A) = \Pr\left[ Z_f \in A \right].$

Let $r = [(\log p_N)^2]$ and write $Z= X+Y$, where
\begin{align*}
 Y &= \frac{1}{\sqrt{\log_{\lambda}N}} \sum_{i=r+1}^{p_N-r} \lambda_f^i( \check{f}((v_i,k_i)) - \int \check{f} d \sm).
\end{align*}
Hence $\|X\|_{\infty} \ll \frac{(\log\log N)^2}{\sqrt{\log N}}.$
Then
\begin{align*}
\int \phi(Z) d \nu_N &= \int \phi(Y) d \nu_N + \int \phi(Z) - \phi(Y) d \nu_N. \\
\intertext{As $\phi$ is Lipschitz, the second integral is at most $\|\phi\|_{\text{Lip}} \|X\|_\infty = o(1).$ Hence}
\int \phi(Z) d \nu_N 
&= \int \phi(Y \circ S^r \circ L^r) d (\nu_N \circ S^r \circ L^r) + o(1)\\
&= \int \phi(Y \circ S^r \circ L^r) d \mpm[p_N] \circ S^r \circ L^r \\
&\hspace{1cm}+ O(\dtv (\nu_N \circ S^r \circ L^r, \mpm[p_N] \circ S^r \circ L^r)) + o(1).\\
\intertext{By Proposition \ref{prop:Nmpm} and Proposition \ref{prop:mpmcoupling}, $ \dtv (\nu_N \circ S^r \circ L^r, \mpm[p_N] \circ S^r \circ L^r) \to 0$ as $N \to \infty.$ Therefore
}
\int \phi(Z) d \nu_N 
&= \int \phi(Y \circ S^r \circ L^r) d \mpm[p_N - 2r] +o(1).\\
\intertext{     Again using that $X$ is uniformly small and the Lipschitzness of $\phi,$ we conclude}
\int \phi(Z) d \nu_N 
&= \int \phi(Z) d \mpm[p_N] +o(1).
\end{align*}
All said, we have shown that
\[
	\dbl\left( 
	\nu_N \circ Z^{-1}, \mpm[p_N] \circ Z^{-1}
	\right) \to 0,
\]
as $N \to \infty$ uniformly in $\ell.$

Now the theorem follows from Theorem \ref{thm:Markov} and the observation that $\frac{p_N}{\log_{\lambda}N} \to 1$, provided that we show that there is no $\check h$  satisfying $P^*P \check h = \check h$ so that $\check f = \int \check f(x)\sm(dx) + \check h - \lambda_f P\check h$, where $\tm^*$ and $\tm$ are defined in \eqref{eq:pstar} and \eqref{eq:p} and
$$(Ph)(x) = \sum_{x \in \statespace} \tm(x,y)h(y) \quad \text{and} \quad (P^*h)(x) = \sum_{x \in \statespace} \tm^*(x,y)h(y).$$

Suppose that there were such an $\check h.$ Then by Theorem \ref{thm:Markov}, we have that
\[
        W=
\sum_{i=1}^{p_N} \lambda_f^i \left( \check{f}((v_i,k_i)) - \int \check{f} d \sm \right).
\]
is uniformly bounded in $N$ for $\mpm[p_N]$-almost every path.  As $\mpm[p]$ has full support on $\pathspace,$ for every $p,$ we have that there is a $C$ so that 
\[
        \sup_{N > 0} \sup_{(v_i,k_i)_{i=1}^{p_N} \in \pathspace[p_N]}
        \left|
        W\left( 
        (v_i,k_i)_{i=1}^{p_N}
\right)\right| < C. 
\]
Hence we also have that
\[
        \sup_{N > 0} \sup_{(v_i,k_i)_{i=1}^{p_N} \in \pathspace[p_N]}
        \left|
        \lambda_f^{-p_N-1}
        W\left( 
        (v_i,k_i)_{i=1}^{p_N}
\right)\right| < C. 
\]
Observe that 
\[
        \lambda_f^{-p_N-1}W=
        \sum_{i=1}^{p_N} \lambda_f^{-(p_N-i+1)}( \check{f}((v_i,k_i)) - \int \check{f} d \sm).
\]
Hence by Theorem \ref{thm:Markov} applied to the reversed chain with transition matrix $\tm^*,$ there must be an $\hat h$ so that $PP^* \hat h = \hat h$ and 
\(
\check f = 
\int \check{f} d \sm
+\hat h
-\lambda_f^{-1} P^* \hat h.
\)

We now turn to characterizing those $\hat h$ for which $PP^*\hat h = \hat h.$  First, we evaluate $PP^*:$ by equations \eqref{eq:pstar} and \eqref{eq:p}
\begin{align*}
        \tm \tm^*\left( (a,j),(b,k) \right)
        &=
        \sum_{(c,\ell) \in \statespace}
        \tm\left( (a,j),(c,\ell) \right)
        \tm^*\left( (c,\ell),(b,k) \right) \\
        &=
        \sum_{(c,\ell) \in \statespace}
        \frac{\one[ \theta(c)_{\ell} = a] 
        {\hat \rho}(c,\ell)}
        {\lambda {\hat \rho}(a,j) }
        \frac{\one[ \theta(c)_{\ell} = b] 
        {\hat \sigma}(b,k)}
        {\lambda {\hat \sigma}(c,\ell) }. 
\end{align*}
Observe that this is nonzero if and only if $a=b.$  Hence $\tm \tm^*$ is a block matrix, with $|\alphabet|$ many blocks, each of which is positive.  Moreover, we have that $\tm \tm^*$ is right stochastic, as it is the product of two right stochastic matrices, and hence 
\[
1
=\sum_{(b,k) \in \statespace}
        \tm \tm^*\left( (a,j),(b,k) \right)
        =\sum_{k=1}^{|\theta(a)|}
        \tm \tm^*\left( (a,j),(a,k) \right).
\]
Thus each of these blocks is itself right stochastic.  As a consequence, the eigenspace of $\tm \tm^*$ with eigenvalue $1$ has dimension $|\alphabet|$ and is spanned by those functions $(a,j) \mapsto h(a),$ where $h : \alphabet \to \C$ is any function.

Hence we have that $\hat h(a,j) = h(a)$ for some function $h.$  Evaluating $P^*\hat h,$ we have
\begin{align*}
        P^*\hat h(a,j)
        &= \sum_{(b,k) \in \statespace}
        \tm^*\left( (a,j),(b,k) \right) h(b) \\
        &= \sum_{(b,k) \in \statespace}
        \frac{\one[ \theta(a)_{j} = b] 
        {\hat \sigma}(b,k)}
        {\lambda {\hat \sigma}(a,j) }
         h(b) \\
&= 
 h(\theta(a)_j)
\sum_{(b,k) \in \statespace}
        \frac{\one[ \theta(a)_{j} = b] 
        {\hat \sigma}(b,k)}
        {\lambda {\hat \sigma}(a,j) }\\
&= 
 h(\theta(a)_j)
\sum_{(b,k) \in \statespace}
        \tm^*\left( (a,j),(b,k) \right) \\
        &=h(\theta(a)_j).
\end{align*}
Therefore, we have that 
\[
        \check f(a,j) = \int \check f(x)\sm(dx) + h(a) - \lambda_f^{-1} h(\theta(a)_j),
\]
which contradicts the hypothesis on $f$ that we assumed in the statement of the proposition.
\end{proof}

\section{Proof of coboundary proposition}
\label{sec:coboundary}

\begin{proof}[Proof of Proposition~\ref{prop:coboundary}]
        \noindent $(i) \Leftrightarrow (ii): $ This follows from that $(X_{\theta},T)$ is minimal.

        \noindent $(iii) \Leftrightarrow (iv): $ This follows from the identities $f(\theta(a)_j) = \functional[f](\theta(a)_{<j+1}) - \functional[f](\theta(a)_{<j})$ 
and $\functional[f](\theta(a)_{<1}) =0.$

\noindent $(i) \Rightarrow (iii): $ By Proposition \ref{Prop:service}, if $(iii)$ does not hold, then for a fixed point $u = u_1u_2\dots,$ $\limsup\limits_{N \to \infty} | \functional[f](u_{\leq N}) | = \infty$.

\noindent $(iii) \Rightarrow (ii): $ Let $u$ be a fixed point.
Then by \eqref{eq:11}, for each $N$ there exists $(v_1, k_1), \dots, (v_{p_N},k_{p_N}) \in \statespace $ so that 
\[
\functional[f](u_{< N}) = \sum_{i=1}^{p_N} \lambda_f^{i-1} \functional[f](\theta(v_i)_{<k_i}). 
\]
Then, since $v_i = \theta(v_{i+1})_{k_i+1},$ 
\begin{align*}
\functional[f](u_{<N}) 
        &= \sum_{i=1}^{p_N} \lambda_f^{i-1} 
        \left[ \int_{\alphabet} f(c)\sma(dc) + h(v_i) - \lambda_f^{-1}h(\theta(v_i)_{k_i})\right] \\
	&= \sum_{i=1}^{p_N} \left[ \lambda_f^{i-1} \int_{\alphabet} f(c)\sma(dc) \right] - \lambda_f^{-1} h(\theta(v_1)_{k_1}) + \lambda_f^{p_N-1} h(v_{p_N}).
        \end{align*}
If $\lambda_f \ne 1,$ then we are done. If $\lambda_f =1$, one can find $a \in \alphabet$ and $k \in \N$ so that $\theta^k(a)_1 = a$. Then for $0 \leq i \leq k-1,$ 
\[
 \int_{\alphabet} f(c)\,\sma(dc)
 + h(\theta^i(a)_1) - h(\theta^{i+1}(a)_1) = 0.
\]
Adding these sums,
we have $ \int f(x)\,\sma(dx)=0.$ Thus $\functional[f](u_{\leq N})$ is bounded.
\end{proof}


\section{Proof of Theorem \ref{thm:clt2}}

Let $u(a,k) = \theta^k(a)$ for $a \in \alphabet$ and $k, N \in \N$. For $n \leq |u(a,k)|$, define
\[
Z_n^N(a,k) =
\begin{cases}
S_f(u(a,k)_{\leq n}) - \log_{\lambda}(N) \int_{\alphabet} f(b) \sma(db), & \text{if } \lambda_f =1 \\
S_f(u(a,k)_{\leq n}), & \text{otherwise}.
\end{cases}
\]

\begin{Lemma}
\label{lem:uniform}
Let $f$ be a left eigenfunction of $M$ with eigenvalue $\lambda_f$ having $|\lambda_f| =1$ which is not a coboundary.
For any Borel measurable set $A \subset \C$ (or $\R$ in the case $\lambda_f$ and $f$ are real)
let $\Phi(A) = \Pr( Z_f \in A).$
 For any Borel-measurable set $A \subset \C$ with $\Phi(\partial A)=0,$
\[
\max_{\substack{a \in \alphabet \\ k \in \N :|u(a,k)| \geq N}} 
\left|
\frac{1}{N} |\{1 \leq k \leq N: \frac{Z_n^N (a,k)}{\sqrt{\log_{\lambda} N}} \in A \}| - \Phi(A)
\right|
\rightarrow 0 \,\,\, \text{as} \,\, N \rightarrow \infty.
\]
\end{Lemma}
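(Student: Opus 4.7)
The plan is to deduce the lemma directly from Proposition~\ref{Prop:service} followed by an application of the Portmanteau lemma. Let $\mu_{a,k,N}$ denote the law of $Z_{K_N}^N(a,k)/\sqrt{\log_\lambda N}$, where $K_N$ is uniform on $\{1,\dots,N\}$; the quantity $\frac{1}{N}|\{1\leq n\leq N : Z_n^N(a,k)/\sqrt{\log_\lambda N}\in A\}|$ is precisely $\mu_{a,k,N}(A)$, and the definition of $Z_n^N(a,k)$, in both regimes $\lambda_f=1$ and $\lambda_f\neq 1$, matches the centered and scaled Birkhoff sum treated in Proposition~\ref{Prop:service}. Applying that proposition once for each $a\in\alphabet$ and then taking the maximum over the finite alphabet yields
\[
\max_{a\in\alphabet}\sup_{\substack{k\in\NN\\|\theta^k(a)|\geq N}}\dbl(\mu_{a,k,N},\Phi)\to 0 \quad\text{as } N\to\infty.
\]

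The second step is to upgrade this uniform convergence in the bounded-Lipschitz metric to uniform convergence on any Borel set $A$ with $\Phi(\partial A)=0$. I argue by contradiction: if the conclusion of the lemma failed, there would exist $\varepsilon>0$, a sequence $N_j\to\infty$, and admissible pairs $(a_j,k_j)$ with $|\mu_{a_j,k_j,N_j}(A)-\Phi(A)|\geq\varepsilon$. The previous display forces $\dbl(\mu_{a_j,k_j,N_j},\Phi)\to 0$, so $\mu_{a_j,k_j,N_j}\Rightarrow\Phi$ on $\C$, and then item (3) of the Portmanteau lemma (Lemma~\ref{lem:portemanteau}) gives $\mu_{a_j,k_j,N_j}(A)\to\Phi(A)$, which is the desired contradiction.

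There is no real obstacle in this argument: Proposition~\ref{Prop:service} already packages the hard technical work (the path-space decomposition, the coupling with the stationary Markov measure $\mpm$, and the Markov CLT) into a single uniform-in-$\ell$ bounded-Lipschitz statement, and what remains is the standard fact that uniform convergence in a metric for weak convergence transfers to uniform convergence on any continuity set of the limit. The only point that warrants a careful reading is the alignment between the definition of $Z_n^N$ here (which subtracts $\log_\lambda(N)\int f\,d\sma$ precisely when $\lambda_f=1$) and the centering appearing in Proposition~\ref{Prop:service}; this is verified by direct inspection.
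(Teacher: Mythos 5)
Your proof is correct and takes essentially the same approach as the paper, which simply observes that the lemma follows from Proposition~\ref{Prop:service} together with the finiteness of $\alphabet$. You have supplied the short omitted details, namely the passage from uniform bounded-Lipschitz convergence to uniform convergence on $\Phi$-continuity sets via a subsequence extraction and item~(3) of the Portmanteau lemma.
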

\begin{proof}
	As $\alphabet$ is finite, this follows directly from Proposition~\ref{Prop:service}.	
\end{proof}


Let $M= |u(a,k)|$. For $n \leq M$, define
\[
Y_n^N(a,k) =
\begin{cases}
-S_f(u(a,k)_{[M-n+1, M]}) - \log_{\lambda}(N) \int_{\alphabet} f(b) \sma(db), & \text{if } \lambda_f =1 \\
-S_f(u(a,k)_{[M-n+1,M]}), & \text{otherwise}.
\end{cases}
\]

\begin{Lemma}
\label{lem:reverse}

 For any Borel-measurable set $A \subset \C$ with $\Phi(\partial A)=0,$
 as $N \rightarrow \infty$
\begin{equation}
\label{eq:reverse}
\max_{\substack{a \in \alphabet \\ k \in \N : |u(a,k)| \geq N}} 
\left|
\frac{1}{N} |\{1 \leq n \leq N: \frac{Z_n^N (a,k)}{\sqrt{\log_{\lambda} N}} \in A \}| - \frac{1}{N} |\{1 \leq n \leq N: \frac{Y_n^N (a,k)}{\sqrt{\log_{\lambda} N}} \in A \}|
 \right| 
\rightarrow 0.
\end{equation}
\end{Lemma}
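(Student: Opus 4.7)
The plan hinges on the identity
\[
  S_f(w_{\leq n}) + S_f(w_{[n+1,M]}) = S_f(w) = \lambda_f^k f(a),
\]
valid because $f$ is a left eigenvector of $M$. Here $w := u(a,k) = \theta^k(a)$ and $M := |w|$. Rearranging gives
\[
  Y_n^N(a,k) = Z_{M-n}^N(a,k) - \lambda_f^k f(a),
\]
in both the $\lambda_f = 1$ and $\lambda_f \neq 1$ cases, with $|\lambda_f^k f(a)| \leq \|f\|_\infty$. A standard Lipschitz approximation of $\mathbf{1}_A$ (valid since $\Phi(\partial A)=0$, where $\Phi$ is the law of $Z_f$) then reduces the claim to showing, uniformly in $a$ and $k$ with $|u(a,k)| \geq N$,
\[
  \frac{1}{N}\left|\{m \in [M-N,M-1] : Z_m^N(a,k)/\sqrt{\log_\lambda N} \in A\}\right| \to \Phi(A),
\]
since the analogous count for $m \in [1,N]$ tends to $\Phi(A)$ by Lemma~\ref{lem:uniform}.

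To analyze this suffix-window count I pass to the reverse substitution $\tilde\theta$, defined on letters by $\tilde\theta(a) = $ (the reverse of $\theta(a)$) and extended by concatenation. Then $\tilde\theta$ is primitive with the same incidence matrix as $\theta$, so $f$ remains a left eigenfunction of $M$ with eigenvalue $\lambda_f$; moreover $\tilde w := \tilde\theta^k(a)$ is the reverse of $w$, so $S_f(w_{[m+1,M]}) = S_f(\tilde w_{\leq M-m})$, and with $n := M - m$,
\[
  Z_m^N(a,k) = \lambda_f^k f(a) - S_f(\tilde w_{\leq n}) - \log_\lambda(N)\int_{\alphabet} f\,d\sma.
\]
Because the coboundary property is equivalent to bounded partial sums (Proposition~\ref{prop:coboundary}, Gottschalk--Hedlund) and is therefore invariant under orientation reversal, $f$ is not a coboundary for $\tilde\theta$ either, and Lemma~\ref{lem:uniform} applied to $\tilde\theta$ gives uniform convergence of the empirical distribution of $[S_f(\tilde w_{\leq n}) - \log_\lambda(N)\int f\,d\tilde{\mathfrak{n}}]/\sqrt{\log_\lambda N}$ over $n \in [1,N]$ to the law of the corresponding CLT variable $\tilde Z_f$, where $\tilde{\mathfrak{n}}$ is the analog of $\sma$ for $\tilde\theta$.

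Combining these, the empirical distribution of $Z_m^N/\sqrt{\log_\lambda N}$ with $m$ uniform on $[M-N,M-1]$ converges to the law of $-\tilde Z_f$, modulo an additive drift of size $\sqrt{\log_\lambda N}\bigl(\int f\,d\sma + \int f\,d\tilde{\mathfrak{n}}\bigr)$. The proof reduces to two subsidiary facts. First, for $\lambda_f = 1$, the drifts cancel: $\int f\,d\sma + \int f\,d\tilde{\mathfrak{n}} = 0$. This is a direct calculation using $\hat{\tilde\sigma}(a,j) = \lambda^{-1}\sigma(\theta(a)_{|\theta(a)|-j+1})$, the reindexing $j \mapsto j' := |\theta(a)|-j+1$, the Perron--Frobenius relation $\sum_b \sigma(b)M_{b,a} = \lambda\sigma(a)$, and $S_f(\theta(a)) = \lambda_f f(a)$. (No such identity is needed for $\lambda_f \neq 1$, where $Z$ and $Y$ carry no drift term.)

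The main conceptual obstacle is the second fact: $-\tilde Z_f \stackrel{d}{=} Z_f$. I would establish it by a self-consistency argument at the maximal scale $N' = M$. Apply Lemma~\ref{lem:uniform} to both $\theta$ and $\tilde\theta$ with $K$ uniform on $[1,M]$ (so $M - K$ is essentially uniform on $[1,M]$ too). The identity $S_f(w_{\leq K}) = \lambda_f^k f(a) - S_f(\tilde w_{\leq M-K})$, combined with the already-established vanishing of the combined drift, shows that the same normalized expression converges simultaneously to $Z_f$ (via Lemma~\ref{lem:uniform} for $\theta$) and to $-\tilde Z_f$ (via Lemma~\ref{lem:uniform} for $\tilde\theta$ applied to $S_f(\tilde w_{\leq M-K})$). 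Uniqueness of weak limits then forces $Z_f \stackrel{d}{=} -\tilde Z_f$, completing the plan.
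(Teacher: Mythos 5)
Your proposal is correct and follows essentially the same route as the paper: the eigenfunction identity $S_f(w_{\leq n}) + S_f(w_{[n+1,M]}) = \lambda_f^k f(a)$, the reinterpretation of the suffix sums as Birkhoff sums of $-f$ for the reversed substitution $\tilde\theta$ so that Lemma~\ref{lem:uniform} applies to both counts, and the identification of the two limits by matching along the maximal subsequence $N_k = |\theta^k(a)|$, where the change of variables $n \mapsto M-n$ is a bijection up to a uniformly bounded shift. The only cosmetic difference is that you verify the drift cancellation $\int f\,d\sma + \int f\,d\tilde{\mathfrak{n}} = 0$ by direct computation (which does check out, yielding $(\lambda_f-1)\sum_a \sigma(a)f(a)\rho(a)$), whereas the paper extracts both the drift and the distributional identity $\Phi_1 = \Phi_2$ from the subsequence agreement in one stroke.
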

\begin{proof}
Let $M = |\theta^k(a)|$. Note that
\begin{align*}
-S_f(u(a,k)_{[M-n+1, M]}) &= - S_f (u(a,k)) + S_f(u(a,k)_{[1, M-n]}) \\
                                           &= - \lambda_f^k f(a) + S_f(u(a,k)_{[1, M-n]}). 
\end{align*} 
Then
\[ 
\{1 \leq k \leq M: \frac{Y_n^M (a,k)}{\sqrt{\log_{\lambda} M}} \in A \} = \{1 \leq k \leq M: \frac{Z_n^M (a,k)}{\sqrt{\log_{\lambda} M}} \in A + \frac{\lambda_f^k f(a)}{\sqrt{\log_{\lambda} M}} \}
\]
Thus, \eqref{eq:reverse} holds along the subsequence $N_k= |\theta^k(a)| \uparrow \infty$. 
Given a substitution $\theta$, we can define a reverse substitution $\tilde{\theta}$ by $\tilde{\theta}(a) = a_n a_{n-1} \cdots a_1$ for $\theta(a) = a_1 a_2 \cdots a_n$. Since \eqref{eq:reverse} holds along the subsequence $N_k$, if there exists a central limit theorem with a drift for $\theta$ with eigenfunction $f$ and there exists a central limit theorem for $\tilde{\theta}$ and $-f$, then one has the same drift for $\tilde{\theta}$ and $-f$.
Now we can see that $-S_f(u(a,k)_{[M-n+1, M]})$ is a Birkhoff sum of $-f$ on the substitution system associated to $\tilde{\theta}$. 
By Lemma \ref{lem:uniform}, for some $\Phi_1$ and $\Phi_2$,
\[
\max_{\substack{a \in \alphabet \\ k \in \N: |u(a,k)| \geq N}} \left| \frac{1}{N} |\{1 \leq k \leq N: \frac{Z_n^N (a,k)}{\sqrt{\log_{\lambda} N}} \in A \}| - \Phi_1(A) \right| \rightarrow 0,
\]
\[
\max_{\substack{a \in \alphabet \\ k \in \N: |u(a,k)| \geq N}} \left| \frac{1}{N} |\{1 \leq k \leq N: \frac{Y_n^N (a,k)}{\sqrt{\log_{\lambda} N}} \in A \}| - \Phi_2(A) \right| \rightarrow 0.
\]
Since \eqref{eq:reverse} holds along the subsequence $N_k= |\theta^k(a)| \uparrow \infty$, $\Phi_1 = \Phi_2$, 
so the proof is completed.
\end{proof}

\begin{proof}[Proof of Theorem \ref{thm:clt2}]
We will prove for the case $\lambda_f =1$. The proof for $\lambda_f \ne 1$ is analogous.
Let us consider prefix-suffix decomposition of $v$:
\begin{equation}
\label{eq:prefix-suffix}
v = \cdots \theta^2(p_2) \theta(p_1) p_0. c_0 s_0 \theta(s_1) \theta^2(s_2) \cdots 
\end{equation}

\noindent Case I. If only finitely many $s_i$ are non-empty, then for some $a \in \alphabet$ and $l, k \in \N$,
$v_{[0, \infty)} = c_0 s_0 \theta(s_1) \cdots \theta^l(s_l) u$ and $u = \lim_{n \rightarrow \infty} \theta^{kn} (a)$. 
For sufficiently large $n$, $S_f(v_{\leq n}) = S_f(u_{\leq n}) + O(1)$. Thus the result follows from Theorem \ref{thm:clt1}.

\noindent Case II. Otherwise, define $N_l = |c_0 s_0 \theta(s_1) \cdots \theta^l(s_l)|$. Note that $N_{l} \uparrow \infty$. For any positive integer $N$, we choose $N_{\ell}$ such that $N_l \leq N < N_{\ell+1}$. Then set $a_{v,N} = S_f (v_{[1,N_{\ell}]})$. 
Note that 
\[
S_f(v_{[1,n]}) =
\begin{cases}
S_f(v_{[1, N_{\ell}]}) - S_f(v_{(n,N_{\ell}]}), & \text{if } n \leq N_{\ell} \\
S_f(v_{[1, N_{\ell}]}) + S_f(v_{(N_{\ell},n]}), & \text{if } n > N_l.
\end{cases}
\]
Then
\begin{align}
\label{eq:10}
&\frac{1}{N} \left| \{ 1 \leq n \leq N: \frac{S_f (v_{[1,n]}) - a_{v,N} - \log_{\lambda} (N) \int_{\alphabet} f(b) \sma (db)}{\sqrt{\log_{\lambda} N}} \in A \}\right|  \\
&= \frac{N_{\ell}}{N} \frac{1}{N_{\ell}} \left| \{1 \leq n \leq N_{\ell}: \frac{-S_f(v_{(n, N_{\ell}]}) - \log_{\lambda} (N) \int_{\alphabet} f(b) \sma (db) }{\sqrt{\log_{\lambda}N}} \in A \} \right| \notag \\
&+ \frac{N- N_{\ell}}{N} \frac{1}{N- N_{\ell}} \left| \{N_{\ell} < n \leq N: \frac{S_f(v_{(N_{\ell}, n]}) - \log_{\lambda} (N) \int_{\alphabet} f(b) \sma (db) }{\sqrt{\log_{\lambda}N}} \in A \} \right|. \notag
\end{align}
Fix $\epsilon >0$ small so that if $M$ satisfies that $\epsilon N \leq M \leq N$, then $\frac{\log_{\lambda} N}{\log_{\lambda} M} = 1 + O(\epsilon)$ and $\log_{\lambda} \frac{M}{N} = 1 + O(\epsilon)$. 

For any $N$ we have the following three cases:

Case (i): $\epsilon N \leq N_{\ell} \leq N$ and $\epsilon N \leq N- N_{\ell} \leq N$.
If $N$ is large enough so that $N_{\ell}$ is large, 
\begin{equation*}
\begin{split}
&\frac{-S_f(v_{(n, N_{\ell}]}) - \log_{\lambda} (N) \int_{\alphabet} f(b) \sma (db) }{\sqrt{\log_{\lambda}N}} \\
 &= \frac{-S_f(v_{(n, N_{\ell}]}) - \log_{\lambda} (N_{\ell}) \int_{\alphabet} f(b) \sma (db) }{\sqrt{\log_{\lambda}N_{\ell}}} (1+ O(\epsilon))+ o_N(1).
\end{split}
\end{equation*}
So, from Lemma \ref{lem:reverse}
\[
 \frac{1}{N_l} \left| \{1 \leq n \leq N_{\ell}: \frac{-S_f(v_{(n, N_{\ell}]}) - \log_{\lambda} (N) \int_{\alphabet} f(b) \sma (db) }{\sqrt{\log_{\lambda}N}} \in A \} \right|
\]
is as close as to $\Phi(A)$ for small $\epsilon$ and large $N$.
Similarly, 
\[ \frac{1}{N- N_{\ell}} \left| \{N_{\ell} < n \leq N: \frac{S_f(v_{(N_{\ell}, n]}) - \log_{\lambda} (N) \int_{\alphabet} f(b) \sma (db) }{\sqrt{\log_{\lambda}N}} \in A \} \right|\]
is also as close as to $\Phi(A)$. Thus \eqref{eq:10} is close to $\Phi(A)$. 

Case (ii) $N_{\ell} < \epsilon N$. Then 
\[
\frac{N_{\ell}}{N} \frac{1}{N_{\ell}} \left| \{1 \leq n \leq N_{\ell}: \frac{-S_f(v_{(n, N_{\ell}]}) - \log_{\lambda} (N) \int_{\alphabet} f(b) \sma (db) }{\sqrt{\log_{\lambda}N}} \in A \} \right|  < \epsilon.
\]
Also $N -N_{\ell} > (1- \epsilon)N \geq \epsilon N$. Using the same argument as in Case (i), one can see that \eqref{eq:10} is close to $\Phi(A)$.

Case (iii) $N - N_{\ell} < \epsilon N$. It is similar to Case (ii).

Now it remains to show equation \eqref{eq:LIL}. For almost every $v$, the prefix-suffix decomposition \eqref{eq:prefix-suffix} satisfies Case II above, thus $a_{v,N}$ is given by $a_{v,N} = S_f (v_{[1,N_{\ell}]})$ as above. Moreover we claim that there exists $C_0$ such that for almost every $v$,
\begin{equation}
	\label{eq:ell}
 \limsup_{\ell \to \infty} \frac{\log_{\lambda} \frac{N_{\ell +1}}{N_{\ell}}}{\log_{\lambda} \ell} < C_0.
\end{equation}
Note that for some $t>0$, there exists $c >0$, denoting $m = t/c \in \N$, such that if $\log_{\lambda} \frac{N_{\ell +1}}{  N_{\ell}} > t$, then $s_{\ell - m}, s_{\ell - m+1}, \dots, s_l$ are empty-word.
Since the sequences $(p_i,c_i, s_i)_{i \in \NN_0}$ are primitive homogeneous Markov chains, for some $\alpha > 0$,
\[
\Pr \left[ \log_{\lambda} \frac{N_{\ell +1}}{ N_{\ell}} > t \right] < e^{- \alpha t}. 
\]
Choosing $t = \frac{2}{\alpha} \log \ell$ and applying Borel-Cantelli Lemma, the claim follows. 

Now we see that 
\[
	\functional(v_{[1,N_{\ell}]}) = 
	\functional (c_0) +  \functional (s_0) + \lambda_f \functional (s_1) + \cdots +  \lambda_f^{\ell} \functional (s_{\ell}).
\]
As $\functional(v_{[1,N_{\ell}]})$ is an additive functional of the finite state Markov chain $(p_i,c_i, s_i)_{i \in \NN_0}$, the law of iterated logarithm holds (see Theorem~\ref{thm:Markov}).  If $\lambda_f=1,$ this implies there is some $C>0$
\[
\limsup_{\ell \to \infty} 
\frac{|\functional (v_{[1,N_{\ell}}) - \ell \int_{\alphabet} f(a)\sma(da)|}{\sqrt{ \ell \log \log \ell}}
        =C. 
\]
From \eqref{eq:ell}, the desired conclusion holds. For general $\lambda_f$ with $|\lambda_f|=1$, a similar argument completes the proof. 

\end{proof}

\section{Proofs for $|\lambda_f| \neq 1$}
We begin with the proof of Theorem~\ref{thm:<1}.
\begin{proof} [Proof of Theorem~\ref{thm:<1}]
Given $N$, one can find $p_N$ such that $|\theta^{p_N-1}(a)| < N \leq |\theta^{p_N}(a)|$.
We again define a function $\check{f}$ on $\statespace$ by
$\check{f}(v_i,k_i) = \functional(\theta(v_i)_{< k_i}).$
Then we again have that for any $1 \leq K \leq N,$ that
\[
        \lambda_f \functional (u_{<K}) = \sum_{i=1}^{p_N} \lambda_f^{i} \check{f}((v_i,k_i)),
\]
where $\Psi_{a,p_N}(K) = (v_1,k_1)(v_2,k_2)\dots(v_{p_N},k_{p_N}).$

For any natural number $p,$ let $Z_{p} =\sum_{i=1}^{p} \lambda_f^i \check{f}((v_i,k_i))$, which is a function on $\pathspace$.  We naturally embed $\pathspace$ as the initial coordinates of $\pathspace[\infty],$ and thus also consider $Z_p$ a function on $\pathspace[\infty].$  Also let  $Z_\infty = W_f = \sum_{i=1}^{\infty} \lambda_f^i \check{f}((v_i,k_i)),$ a function on $\pathspace[\infty].$  Observe that
\[
\sup_{p \geq 1} \sup_{\bold{x} \in \pathspace[\infty]} |Z_p(\bold{x})| < \infty.
\]
Moreover, we have that uniformly in $p,$ for all $n > p$
we have that 
\begin{equation}
  \label{eq:Zconvergence}
\sup_{\bold{x} \in \pathspace[\infty]} |Z_p(\bold{x}) - Z_n(\bold{x})| \ll |\lambda_f|^{p}.
\end{equation}
Hence the same estimate holds for the difference of $Z_p$ and $Z_\infty.$  

We will show that for any bounded uniformly continuous function $\phi$ 
\[
  \int_{\pathspace} \phi(Z_{p_N}(\bx)) \nu_N(d\bx) \to \int_{\C} \phi(Z_\infty(\bold{x})) \champm[\infty](d\bold{x}),
\]
which will complete the proof.  Exactly as in the proof of Proposition~\ref{Prop:service}, using \eqref{eq:Zconvergence} and Proposition \ref{prop:Nmpm}, we have that
\[
  \int \phi(Z_{p_N}) d \nu_N 
  = \int \phi(Z_{p_N}) d \champm[p_N] +o(1)
  = \int \phi(Z_{p_N}) d \champm[\infty] +o(1)
\]
But by uniform continuity of $\phi$ and \eqref{eq:Zconvergence}, we have that
\[
  \int \phi(Z_{p_N}) d \champm[\infty]
  =\int \phi(Z_{\infty}) d \champm[\infty] +o(1),
\]
so the proof is complete.

\end{proof}

\begin{proof}[Proof of Theorem~\ref{thm:>1}]
  Let $p= p(\ell) \in \NN$ be such that $|\theta^{p-1}(a)| < N_\ell \leq |\theta^p(a)|.$
  For every $\bold{x}=x_1x_2\cdots x_p \in \pathspace,$ we have by Proposition~\ref{prop:decomposition} that
  \begin{equation}
    \label{eq:>1nu}
	  \nu_{N_\ell}
                \left( \left\{ \bold{x} \right\} \right)
                =
		\sum_{q=1}^{p} \sum_{1 \leq j < k_q} 
                \frac{\ssim[*][(v_q,j)][p-q+1]}{N_\ell} 
                \upm[(v_q,j)][p-q+1]
		\left( \left\{ x_1x_2\cdots x_{p-q} \right\} \right),
	      \end{equation}
	(Caution: we have used the reversed path here),
	where $\Psi_a^r(N_\ell) = (v_1(\ell),k_1(\ell))(v_2(\ell),k_2(\ell))\cdots.$

	We will begin by showing that as a measure on $\statespace^{\infty},$ $\nu_{N_\ell}$ converges.  Let ${\tilde \nu}_{N_\ell}$ be a measure on $\statespace^{\infty}$ 
	given by the property that for any $1 \leq K < N_\ell,$
	\( 
	{\tilde \nu}_{N_\ell}( \Psi_a^r(K))
	=\nu_{N_\ell}( \Psi_{a,p}(K)).
	\)
	It follows that for any cylinder $[\bold{x}] = [x_1x_2\dots x_p]$
	\begin{equation}
	  {\tilde \nu}_{N_\ell}( [\bold{x}])
	  = \nu_{N_\ell}(x_px_{p-1}\dots x_1).
	  \label{eq:nur}
	\end{equation}

	We will show that 
	\(
	{\tilde \nu}_{N_\ell}
	\Rightarrow
	\rmpm,
	\)
	which is equivalent to showing that for any fixed cylinder $[\bold{x}] = [x_1x_2\dots x_k]$
	\begin{equation}
	  {\tilde \nu}_{N_\ell}( [\bold{x}])
	  \to \rmpm( [\bold{x}]).
	  	  \label{eq:weaknur}
	\end{equation}

	Recall by \eqref{eq:ssimlimit}
\begin{equation*}
        \lim_{p \to \infty} \lambda^{-p}\ssim[x][y][p] = 
        {\hat \sigma}(y)
        {\hat \rho}(x).
      \end{equation*}
      Moreover, we have that 
      \[
\lambda^{-p}\ssim[x][y][p] = {\hat \sigma}(y){\hat \rho}(x) + O(e^{-cp}),
      \]
      uniformly in $x$ and $y$ by Perron-Frobenius theory and \eqref{eq:ssimlimit}. 

      By the convergence of $\Psi_a^r(N_\ell),$ we therefore have that for every $r \in \NN,$ there is an $\ell_0(r)$ sufficiently large so that for all $\ell > \ell_0$
      \[
	\sum_{q=1}^r \sum_{j=1}^{k_q(\ell)-1} 
                \frac{\ssim[*][(v_q(\ell),j)][p-q+1]}{\lambda^p}
		=
                \sum_{q=1}^r \sum_{1 \leq j < \kappa_q} 
		{\hat \sigma}( (\rho_q,j) )\lambda^{1-q}
		+ O(e^{-cp} + \lambda^{-r}),
      \]
      where we have used that $1 = \sum_{x \in \statespace}{ \hat\rho(x)}.$
      Furthermore, we have that
      \[
	\sum_{q=r+1}^{p(\ell)} \sum_{j=1}^{k_q(\ell)-1} 
                \frac{\ssim[*][(v_q(\ell),j)][p-q+1]}{\lambda^p}
		= O(\lambda^{-r}),
      \]
      uniformly in $N_\ell.$
      By \eqref{eq:>1nu}, we have that
      \[
	N_\ell = \sum_{q=1}^{p} \sum_{1 \leq j < k_q} {\ssim[*][(v_q,j)][p-q+1]},
      \]
      and hence 
      \[
	\frac{N_\ell}{\lambda^{p(\ell)}} = 
\sum_{q=1}^r \sum_{1 \leq j < \kappa_q} 
		{\hat \sigma}( (\rho_q,j) )\lambda^{1-q}
		+
		O(e^{-cp(\ell)} + \lambda^{-r}),
	\]
	uniformly in $r$ for all $\ell > \ell_0(r).$
	Define
	\begin{equation}
	  \label{eq:>1Nl}
	  R =
	  \lim_{\ell \to \infty} \frac{N_\ell}{\lambda^{p(\ell)}}
	  =\sum_{q=1}^\infty \sum_{1 \leq j < \kappa_q} 
		{\hat \sigma}( (\rho_q,j) )\lambda^{1-q}.
	 \end{equation}
	Then $\mathfrak{a}( (v,k) )$ is given by
	 \[
\mathfrak{a}( (v,k) ) = 
\frac{1}{R} {\hat \sigma}((v,k)) \sum_{q=1}^\infty \one[ v = \rho_q \text{ and } k < \kappa_q] \lambda^{1-q}, 
	 \]
	 and from \eqref{eq:>1Nl} and \eqref{eq:>1nu}, \eqref{eq:weaknur} follows.

Recall that
$U_f (\bold{x}) = \sum_{i=1}^{\infty} \lambda_f^{-i} \functional(\theta(v_i)_{< k_i}).$
This is a bounded $\C$-valued continuous function from $\pathspace[\infty]_r$ under the product topology.  Hence for any bounded uniformly continuous function $\phi : \C \to \R,$ 
by the definition of
\(
	{\tilde \nu}_{N_\ell}
	\Rightarrow
	\rmpm,
	\)
we have that
	 \[
	   \lim_{\ell \to \infty}
	   \int \phi(U_f(\bold{x})) {\tilde \nu}_{N_\ell}(d\bold{x})
	   =\int \phi(U_f(\bold{x})) {\rmpm}(d\bold{x})
	 \]

	 The remainder of the proof now proceeds in the same manner as the proof of Theorem~\ref{thm:<1}, with some minor changes. 
	 We define a function $\check{f}$ on $\statespace$ by
$\check{f}(v_i,k_i) = \functional(\theta(v_i)_{< k_i}).$
Then we have for any $1 \leq K \leq N_\ell,$ 
\[
  \lambda_f^{-p(\ell)} \functional (u_{<K}) = \sum_{i=1}^{p(\ell)} \lambda_f^{-i} \check{f}((v_i,k_i)),
\]
where $\Psi_a^r(K) = (v_1,k_1)(v_2,k_2)\cdots.$

For any natural number $p,$ let $U_{p} =\sum_{i=1}^{p} \lambda_f^{-i} \check{f}((v_i,k_i))$, which is a function on $\pathspace[\infty]_r.$  
Then we have that 
\begin{equation}
  \label{eq:Uconvergence}
\sup_{\bold{x} \in \pathspace[\infty]} |U_p(\bold{x}) - U_f(\bold{x})| \ll \lambda_f^{-p}.
\end{equation}

Then we have that for any bounded uniformly continuous $\phi,$
\begin{align*}
  \Exp(\phi(\lambda_f^{-p(\ell)} \functional (u_{<K_{N_\ell}})))
  &= \int \phi(U_{p(\ell)}(\bold{x})) {\tilde \nu}_{N_\ell}(d\bold{x}) \\
  &= \int \phi(U_{f}(\bold{x})) {\tilde \nu}_{N_\ell}(d\bold{x}) + o(1).
\end{align*}
Hence we have shown that 
\(
\lambda_f^{-p(\ell)} \functional (u_{<K_{N_\ell}})
\Rightarrow U_f(\bx),
\)
with $\bx$ distributed according to $\rmpm.$  
As 
\[
  \frac{N_\ell^{\log_\lambda|\lambda_f|}}{|\lambda_f|^{p(\ell)}} 
=\frac{\lambda^{p(\ell)\log_\lambda|\lambda_f|}}{|\lambda_f|^{p(\ell)}}
\frac{N_\ell^{\log_\lambda|\lambda_f|}}{\lambda^{p(\ell)\log_\lambda|\lambda_f|}}
\to R^{\log_\lambda|\lambda_f|},
\]
it follows that 
\[
\frac{\functional (u_{<K_{N_\ell}})}{N_\ell^{\log_\lambda|\lambda_f|}e^{ip(\ell)\arg\lambda_f}}
=
\frac
{|\lambda_f|^{p(\ell)}} 
{ N_\ell^{\log_\lambda|\lambda_f|}}
\frac{\functional (u_{<K_{N_\ell}})}
{\lambda_f^{p(\ell)}} 
\Rightarrow \frac{1}{R^{\log_\lambda|\lambda_f|}}U_f(\bx),
\]
and the proof is complete.

      \end{proof}

\section{Appendix: CLT}
In this section, we give a proof of the exact version of the Markov chain central limit theorem that we will need.  
Let $X_1,X_2,\dots$ be a primitive Markov chain on a finite state space $\statespace$ with invariant measure $\pi.$  Let $\Pr$ denote the probability measure of this Markov chain on $\statespace^{\N}.$
Let $p(x,y)$ be the transition matrix of the Markov chain, 
i.e.\,\(
p(x,y) = \Pr( X_2 = y\,|\,X_1 = x).
\)
Hence, $p(x,y)$ is a right stochastic matrix, and $\pi,$ its invariant measure, is the positive left Perron-Frobenius eigenvector of $p$ whose $\|\cdot\|_1$ norm is $1.$

We also define the reversed transition matrix $p^*$ given by $p^*(x,y) = \frac{p(y,x)\pi(y)}{\pi(x)}$ which is also a right stochastic matrix.  It is easily checked that if $Y_j = X_{n-j+1}$ for $j=1,2,\dots,n,$ then $\left( Y_j \right)_{j=1}^n$ are $n$ steps of a stationary Markov chain with transition matrix $p^*.$  Further, $p^*$ is the Hilbert space adjoint of $p$ with respect to the inner product on $\C^\statespace$ given by $\left( f, g \right)_\pi = \sum_{x \in \statespace} f(x)\overline{g(x)} \pi(x).$

We also define the operators $P$ and $P^*$ on $\C^\statespace$ by $(Ph)(x) = \sum_{x \in \statespace} p(x,y)h(y)$ and $(P^*h)(x) = \sum_{x \in \statespace} p^*(x,y)h(y).$
Let $\filt_N = \sigma(X_1,X_2,\ldots, X_N),$ the $\sigma$-algebra generated by the first $N$ states of the Markov chain.  Then we have that for any $N \in \NN,$
 $(Ph)(X_N) = \Exp(h(X_{N+1})|\filt_N)$.

\begin{Theorem}
        \label{thm:Markov}
Let $f : \statespace \to \C$ and $\lambda \in \mathbb{C}$ with $|\lambda|=1.$ Then either
\begin{enumerate}
  \item If there is a function $h : \statespace \to \C$ satisfying $P^*Ph = h$ and $f = \int f\,d\pi + h - \lambda Ph$ then we have
    \[
      \sup_{N \in \N} \left|\sum_{i=1}^N \lambda^i (f(X_i) -  \int f d \pi) \right| \leq 2\|h\|_{\infty} 
    \]
    almost surely.
        \item 
          Otherwise, if there is no such function, we have that
\[
        \frac{1}{\sqrt{N}} \left(\sum_{i=1}^N \lambda^i (f(X_i) -  \int f d \pi) \right) 
        \Rightarrow Z, 
\]
where $Z$ has a complex normal distribution with $\Exp|Z|^2 >0$.  If $\lambda \in \R$ and $f : \statespace \to \R,$ then $Z$ is a real normal distribution.  If $\lambda \not\in \R,$ then real and imaginary parts of $Z$ are independent and have identical variance.  Further, we always have that
    \[
	\Exp |Z|^2 = \Exp |g(X_1)|^2 + \sum_{k=2}^\infty 2\Exp\Re[\lambda^{k-1}g(X_1)\overline{g(X_k)}], 
	\]
	where $g(x) = f(x) - \int f\,d\pi.$
Finally, we have that there is a constant $C >0$ so that with probability $1$ 
\[
	\limsup_{N \to \infty} \frac{\left|\sum_{i=1}^N \lambda^i (f(X_i) -  \int f d \pi) \right|}
{\sqrt{N\log\log N}} = C.
\]

\end{enumerate}
\end{Theorem}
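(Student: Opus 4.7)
The plan is to follow the classical Gordin martingale decomposition for Markov chains.  Setting $g = f - \int f\,d\pi$ so that the quantity of interest is $S_N := \sum_{i=1}^N \lambda^i g(X_i)$, the key step is to solve the twisted Poisson equation $g = \phi - \lambda P\phi$ on $\C^{\statespace}$.  Since the chain is primitive, $P$ has $1$ as a simple eigenvalue with eigenvector $\mathbf{1}$ and all other eigenvalues of modulus strictly less than one, so for $\lambda \neq 1$ with $|\lambda|=1$ the operator $I - \lambda P$ is invertible, while for $\lambda = 1$ the Fredholm alternative combined with $\int g\,d\pi = 0$ gives solvability on the mean-zero subspace.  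Either way $\phi = \sum_{k \ge 0} \lambda^k P^k g$ converges absolutely and geometrically fast by the spectral gap.

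For case (1), I would show that the hypotheses $P^*Ph = h$ and $g = h - \lambda P h$ force $S_N$ to telescope to $\lambda h(X_1) - \lambda^{N+1} h(X_{N+1})$ almost surely.  The key observation is that $P^*Ph = h$ is equivalent to $\|Ph\|_\pi = \|h\|_\pi$, which by strict convexity of $|\cdot|^2$ applied to the conditional expectation $Ph(X_i) = \Exp[h(X_{i+1})\mid \filt_i]$ forces $h(X_{i+1}) = Ph(X_i)$ almost surely across every admissible transition.  Then $\lambda^i g(X_i) = \lambda^i h(X_i) - \lambda^{i+1} h(X_{i+1})$ telescopes and $|S_N| \leq 2\|h\|_\infty$.

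For case (2), with $\phi$ in hand, rewrite
\[
\lambda^i g(X_i) = \lambda^i \phi(X_i) - \lambda^{i+1}\phi(X_{i+1}) + \lambda^{i+1} D_{i+1},\quad D_{i+1} := \phi(X_{i+1}) - P\phi(X_i),
\]
so $S_N = \lambda\phi(X_1) - \lambda^{N+1}\phi(X_{N+1}) + M_N$ with $M_N := \sum_{i=1}^N \lambda^{i+1}D_{i+1}$ a complex-valued martingale with bounded increments.  The boundary terms are $O(1)$ and negligible after $\sqrt{N}$-normalization, so it suffices to analyze $M_N$.  The conditional second moment $\Exp(|D_{i+1}|^2\mid \filt_i) = P|\phi|^2(X_i) - |P\phi(X_i)|^2$ is a bounded function of $X_i$, and Birkhoff's ergodic theorem gives $\langle M\rangle_N/N \to \sigma^2 := \|\phi\|_\pi^2 - \|P\phi\|_\pi^2$ almost surely.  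Treating $(\Re M_N, \Im M_N)$ as a two-dimensional real martingale with uniformly bounded increments, the Lindeberg condition is automatic; the martingale CLT yields $M_N/\sqrt{N} \Rightarrow Z$ for a centered Gaussian vector, and the martingale law of iterated logarithm (Heyde--Scott) delivers the stated $\limsup$ with constant $C$ determined by $\sigma^2$.

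The full covariance structure of $Z$ requires also computing $\Exp M_N^2 = \Exp_\pi D^2 \cdot \sum_i \lambda^{2(i+1)}$: this sum is bounded (hence $\Exp M_N^2 / N \to 0$) when $\lambda^2 \neq 1$, which is automatic whenever $\lambda \notin \R$, forcing $\Re Z$ and $\Im Z$ to be independent normals of equal variance; when $\lambda \in \R$ and $f$ is real, $\phi$ can be chosen real and $Z$ is a real Gaussian.  The dichotomy of the two cases is encoded in the identity $\sigma^2 = (\phi, (I - P^*P)\phi)_\pi$: since $\|P^*P\| \leq 1$ on $L^2(\pi)$, $I - P^*P$ is positive semi-definite, so $\sigma^2 \geq 0$ with equality iff $P^*P\phi = \phi$, in which case $h := \phi$ realizes the coboundary structure of case~(1); hence $\sigma^2 > 0$ in case~(2).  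The Green--Kubo formula $\sigma^2 = R(0) + 2\sum_{k\ge 1}\Re\bar\lambda^k R(k)$ with $R(k) = \Exp_\pi g(X_1)\overline{g(X_{k+1})}$ (equivalent to the stated form up to conjugation convention) then follows by expanding $\sigma^2 = \|g\|_\pi^2 + 2\Re\bar\lambda(g, P\phi)_\pi$ via the series for $P\phi$.  The principal technical obstacle I anticipate is the careful bookkeeping of the joint quadratic-variation matrix of $(\Re M_N, \Im M_N)$, and correspondingly of $\Exp_\pi D^2$ versus $\Exp_\pi|D|^2$, in order to produce the correct covariance of $Z$ and to verify that the LIL constant $C$ is indeed deterministic, both of which ultimately rely on the ergodicity of the primitive chain.
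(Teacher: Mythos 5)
Your proposal is correct and follows essentially the same route as the paper: the twisted Poisson equation $g = h - \lambda Ph$, the martingale $\sum_i \lambda^{i+1}(h(X_{i+1})-(Ph)(X_i))$ with $O(1)$ boundary terms, the variance identity $\sigma^2 = ((I-P^*P)h,h)_\pi$ governing the dichotomy, the Neumann-series derivation of the Green--Kubo formula, and the observation that $\Exp M_N^2/N \to 0$ for $\lambda \notin \R$ forces independent equal-variance real and imaginary parts. No substantive differences.
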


\begin{proof}
	We will use the martingale central limit theorem to prove the convergence (see \cite[Theorem 3.2]{HH}).  To do so, we will show that $Y_N = \sum_{i=1}^N \lambda^i g(X_i)$ is nearly a martingale.  The first step towards doing so is to show that there is a function $h(x)$ so that $g(x) = h(x) - \lambda (Ph)(x)$. Because $p$ is a primitive stochastic matrix, it has Perron-Frobenius eigenvalue $1$, and its other eigenvalues have modulus strictly less than $1$.
Hence, if $\lambda \ne 1$, then $I - \lambda P$ is invertible, so one can find a unique $h(x)$. If $\lambda=1$, then the space $W$ of $r(x)$ with $\int r(x) \pi (dx) =0$ is $\Im (I-P)$ since
\begin{enumerate}[(i)]
        \item $W$ has dimension $|\statespace| -1$, 
        \item $\Im(I-P)$ has dimension $|\statespace| -1$ by considering eigenvalues of $p$,
        \item $\Im(I-P) \subset W$: 
                \[\int h(x) \pi(dx) - \int (Ph)(x) \pi (dx) = \Exp h(X_1) - \Exp(\Exp(h(X_2)| \filt_1))=0.\]
\end{enumerate}
Note that the kernel of $I-P$ is just the constant functions, and hence we may choose $h(x)$ to have $\int h\,d\pi = 0.$  Having made this choice, $h$ is uniquely determined.  Notice that in the case that $\lambda \ne 1,$ we have that $h$ satisfies this condition as well.

Let $Z_N = \sum_{i=1}^{N-1} \lambda^{i+1}[h(X_{i+1}) - (Ph)(X_i)]$. 
Note that 
\[
        Y_N = Z_N + \lambda h(X_1) - \lambda^{N+1}(Ph)(X_N).
\]
Also we can see that $Z_N$ is a martingale:
\[
	\Exp(Z_{N+1}|\filt_N) = Z_N + \lambda^{N+1} \Exp(h(X_{N+1}) -(Ph)(X_N)|\filt_N) = Z_N.
\]
Furthermore, 
\[
        \Exp|Z_N|^2 = \sum_{i=1}^{N-1} \Exp|h(X_{i+1}) - (Ph)(X_i)|^2 
        = (N-1)\Exp|h(X_2) - (Ph)(X_1)|^2
\]
by the orthogonality of martingale increments.  We now show that this variance is $0$ if and only if the function $h$ satisfies $P^*Ph = h.$  If the variance is $0,$ then we have that $Z_N$ almost surely vanishes, and so the first conclusion of the theorem follows from the definition of $Y_N.$  Conversely if $\Exp|h(X_2) - (Ph)(X_1)|^2 \neq 0,$ the second conclusion of the theorem follows immediately from the martingale central limit theorem and the law of iterated logarithm for martingales \cite{Stout}. 

We now expand the squares to get
\begin{align}
\Exp|h(X_2) - (Ph)(X_1)|^2
&=\Exp(h(X_2) - (Ph)(X_1))(\overline{h}(X_2) - \overline{(Ph)}(X_1)) \nonumber \\
&=\Exp |h(X_1)|^2 - \Exp |(Ph)(X_1)|^2 \nonumber\\
&= 
\left( h,h \right)_{\pi}
-\left( Ph,Ph \right)_{\pi} \label{eq:PhPh}\\
&= 
\left( (I - P^*P)h,h \right)_{\pi}.\nonumber
      \end{align}
In the second equality, we have used that
\begin{align*}
        \Exp (\overline{h}(X_2) (Ph)(X_1)) 
        &= \Exp (\Exp (\overline{h}(X_2)| \filt_1)(Ph)(X_1)) \\
        &= \Exp ((\overline{Ph})(X_1) (Ph)(X_1)) \\
        &= \Exp |(Ph)(X_1)|^2. 
\end{align*}
The operator 
\(
I - P^*P
\)
is Hermitian positive semidefinite, and hence the variance is $0$ if and only if 
\(
(I - P^*P)h = 0.
\)
In the case that $\lambda \neq 1,$ the solution to $(I-\lambda P)h = g$ is unique, and hence we are done as this the only possible $h$ that could satisfy the criterion in (1).  If $\lambda = 1,$ the collection of $h'$ for which $(I-\lambda P)h' = g$ just differ from $h$ by constant functions.  Hence, $P^*Ph'=h'$ if and only if $P^*Ph=h.$
It remains to show the formula for the limiting variance of $Z.$  We have that 
\[
	\Exp |Z|^2 = \lim_{N\to\infty} \frac{\Exp |Z_N|^2}{N}
	= \Exp |h(X_2) - (Ph)(X_1)|^2.
\]
By \eqref{eq:PhPh}, we therefore have that 
\begin{align*}
\Exp |Z|^2
&=
\left( h,h \right)_{\pi}
-\left( Ph,Ph \right)_{\pi} \\
&=
\left( h-\lambda Ph,h \right)_{\pi}
+\left( \lambda Ph,h-\lambda Ph \right)_{\pi} \\
&=
\left( g ,h \right)_{\pi}
+\left( \lambda Ph,g \right)_{\pi} \\
&=
\left( ((I-\overline{\lambda}P^{*})^{-1} + \lambda P(I - \lambda P)^{-1})  g ,g \right)_{\pi}. \\
\intertext{These inverses always exist on the space of functions $W$.  Expanding the inverses as Neumann series,
we arrive at}
\Exp |Z|^2
&=(g,g)_{\pi} + \sum_{k=1}^\infty \left[ 
	(\overline{\lambda}^k P^{*k}g,g)_{\pi}
	+ ({\lambda}^k P^{k}g,g)_{\pi}
\right].
\end{align*}
The desired formula for the variance now follows using the identities $P^k g(x) = \Exp\left[ g(X_{k+1}) \vert X_1 = x \right]$ and $P^{*k}g(x) = \Exp\left[ g(X_{1}) \vert X_{k+1}= x \right].$

To show that the real and imaginary parts of $Z$ are independent and have the same variance in the case $\lambda \not\in \R,$ observe that, again by the orthogonality of martingale increments,
\[
	\Exp Z_N^2 = \sum_{i=1}^{N-1} \Exp\lambda^{2(i+1)}(h(X_{i+1}) - (Ph)(X_i))^2 
	= \Exp(h(X_{2}) - (Ph)(X_1))^2
	\sum_{i=1}^{N-1} \lambda^{2(i+1)}.
\]
In particular, if $\lambda \not\in \R,$ we have that 
\[
	\Exp Z^2 = \lim_{N\to\infty} \frac{\Exp Z_N^2}{N}
	=0.
\]
This means that
\[
	0 = \Exp Z^2 = 
	\Exp(\Re Z)^2
	-\Exp(\Im Z)^2
	+2i \Exp(\Re Z \Im z).
\]
As $(\Re Z, \Im Z)$ are jointly Gaussian and their covariance is $0,$ they are independent.  Further, the variances of the real and imaginary parts match.

\end{proof}

\section*{Acknowledgments}
The authors would like to thank Omri Sarig for useful discussion and comments.

\bibliographystyle{plain}
\bibliography{Substitution}

\end{document}